\numberwithin{equation}{section}
\newenvironment{bleu}
{\relax\color{blue}}
{\hspace*{.3ex}\relax}
\newcommand{\bev}{\begin{bleu}}
\newcommand{\ev}{\end{bleu}}
\newcommand{\beb}{\begin{bleu}}
\newcommand{\eb}{\end{bleu}}
\newcommand{\spa}{\vspace{0.5ex}\noindent}
\newenvironment{rouge}
{\relax\color{red}}
{\hspace*{.3ex}\relax}
\newcommand{\ber}{\begin{rouge}}
\newcommand{\er}{\end{rouge}}
\newcommand{\nc}{\newcommand}
\newcommand{\define}[1]{\emph{#1}}
\nc{\on}{\operatorname}
\newtheorem{theorem}{Theorem}[section]
\newtheorem{proposition}[theorem]{Proposition}
\newtheorem{lemma}[theorem]{Lemma}
\newtheorem{corollary}[theorem]{Corollary}
\theoremstyle{definition}
\newtheorem{definition}[theorem]{Definition}
\newtheorem{notation}[theorem]{Notation}
\newtheorem{remark}[theorem]{Remark}
\newtheorem{conjecture}[theorem]{Conjecture}
\nc{\Prop}{\begin{proposition}}
\nc{\enprop}{\end{proposition}}
\nc{\Lemma}{\begin{lemma}}
\nc{\enlemma}{\end{lemma}}
\nc{\Cor}{\begin{corollary}}
\nc{\encor}{\end{corollary}}
\nc{\Def}{\begin{definition}}
\nc{\enDef}{\end{definition}}
\nc{\Th}{\begin{theorem}}
\nc{\entheorem}{\end{theorem}}
\nc{\Rem}{\begin{remark}}
\nc{\enrem}{\end{remark}}
\newcommand{\R}{{\mathbb{R}}}
\newcommand{\Z}{{\mathbb{Z}}}
\nc{\forl}{[\mspace{-.3mu}[\hbar]\mspace{-.3mu}]}
\nc{\Ls}{(\mspace{-.3mu}(\hbar)\mspace{-.3mu})}
\newcommand{\cor}{{\bf k}}
\def\phi{{\varphi}}
\def\epsilon{\varepsilon}
\nc{\Proof}{\begin{proof}}
\nc{\QED}{\end{proof}}
\nc{\qtq}[1][{and}]{\quad\text{#1}\quad}
\nc{\qt}[1]{\quad\text{#1}}
\nc{\dist}{\mathrm{dist}}
\nc{\PL}{\ensuremath{\rm PL}\xspace}
\nc{\LPL}{\ensuremath{\rm LPL}\xspace}
\nc{\al}{\alpha}
\nc{\la}{\lambda}
\nc{\eps}{\epsilon}
\renewcommand{\emptyset}{\varnothing}
\def\BBP{{\mathbb P}}
\def\BBV{\mathbb{V}}
\def\BBW{\mathbb{W}}
\newcommand{\stkHom}[1][]{\mathfrak{Hom}_{\raise1.5ex\hbox to.1em{}#1}}
\nc{\bS}{{\bf S}}
\nc{\RR}{\mathrm{R}}
\nc{\LL}{\mathrm{L}}
\def\sha{\mathscr{A}}
\def\shc{\mathscr{C}}
\def\shd{\mathscr{D}}
\def\she{\mathscr{E}}
\def\shg{\mathscr{G}}
\def\sht{\mathscr{T}}
\newcommand{\ol}{\overline}
\newcommand{\bl}{\bigl(}
\newcommand{\br}{\bigr)}
\newcommand{\lp}{{\rm(}}
\newcommand{\rp}{{\rm)}}
\newcommand{\into}{\hookrightarrow}
\newcommand{\Int}{{\rm Int}}
\newcommand{\vvert}{\Vert}
\renewcommand{\to}[1][]{\xrightarrow[]{#1}}
\newcommand{\isoto}[1][]{\xrightarrow[#1]%
{{\raisebox{-.6ex}[0ex][-.6ex]{$\mspace{1mu}\sim\mspace{2mu}$}}}}
\newcommand{\To}[1][]{\xrightarrow[]{\mspace{10mu}{#1}\mspace{10mu}}}
\newcommand{\Hom}[1][]{\mathrm{Hom}_{\raise1.5ex\hbox to.1em{}#1}}
\newcommand{\RHom}[1][]{\RR\mathrm{Hom}_{\raise1.5ex\hbox to.1em{}#1}}
\newcommand{\Ext}[2][]{\mathrm{Ext}_{\raise1.5ex\hbox to.1em{}#1}^{#2}}
\renewcommand{\hom}[1][]{{\mathscr{H}\mspace{-4mu}om}_{\raise1.5ex\hbox to.1em{}#1}}
\newcommand{\rhom}[1][]{{\RR\mathscr{H}\mspace{-3mu}om}_{\raise1.5ex\hbox to.1em{}#1}}
\newcommand{\ext}[2][]{{\mathscr{E}\mspace{-2mu}xt}_{%
\raise1.5ex\hbox to.1em{}#1}^{#2}}
\newcommand{\Tens}[1][]{\mathbin{\otimes_{\raise1.5ex\hbox to-.1em{}{#1}}}}
\newcommand{\LTens}[1][]{\mathbin{\otimes_{\raise1.5ex\hbox to-.1em{}#1}^{L}}}
\newcommand{\Tor}[2][]{\mathrm{Tor}^{\raise1.5ex\hbox to.1em{}#1}_{#2}}
\newcommand{\tens}[1][]{\mathbin{\otimes_{\raise1.5ex\hbox to-.1em{}{#1}}}}
\newcommand{\dtens}[1][]%
{{\overset{\mathrm{L}}{\underline{\otimes}}}_{#1}}
\newcommand{\etens}{\mathbin{\boxtimes}}
\nc{\aut}{{\sha\mspace{-1mu}\mit{ut}}\,}
\newcommand{\shend}{\operatorname{{\she\mspace{-2mu}\mathit{nd}}}}
\newcommand{\Endo}[1][]{\mathrm{End}_{\raise1.5ex\hbox to.1em{}#1}}
\newcommand{\sendo}[1][]{{\shend}_{\raise1.5ex\hbox to.1em{}#1}}
\newcommand{\Aut}[1][]{\mathrm{Aut}_{\raise1.5ex\hbox to.1em{}#1}}
\newcommand{\RC}{{\rm C}}
\newcommand{\Rb}{{\rm b}}
\newcommand{\SSi}{\on{SS}}
\nc{\sdot}[1]{\overset{\raisebox{-.3ex}{\scalebox{.4}{$\bullet$}}}{#1}}
\newcommand{\eim}[1]{{#1}_!}
\newcommand{\roim}[1]{\RR{#1}_*}
\newcommand{\reim}[1]{\RR{#1}_!}
\newcommand{\opb}[1]{#1^{-1}}
\newcommand{\epb}[1]{#1^{!}}
\DeclareMathOperator{\supp}{supp}
\newcommand{\reg}{{\rm reg}}
\newcommand{\Rc}{{\R\rm c}}
\newcommand{\Vg}{{\BBV_\gamma}}
\newcommand{\rclg}{\rm{PL},\gammac}
\newcommand{\rcl}{\rm{PL}}
\newcommand{\rcc}{\R\rm c,\rm{c}}
\newcommand{\rcg}{\R\rm c,\gamma^{\circ a}}
\newcommand{\gammac}{\gamma^{\circ a}}
\newcommand{\Barc}{{\bf Bar}_\gamma}
\newcommand{\eqdot}{\mathbin{:=}}
\newcommand{\cl}{\colon}
\newcommand{\scbul}{{\,\raise.4ex\hbox{$\scriptscriptstyle\bullet$}\,}}
\newcommand{\ba}{\begin{array}}
\newcommand{\ea}{\end{array}}
\newcommand{\bnum}{\begin{enumerate}[{\rm(i)}]}
\newcommand{\enum}{\end{enumerate}}
\newcommand{\banum}{\begin{enumerate}[{\rm(a)}]}
\newcommand{\eanum}{\end{enumerate}}
\newcommand{\bna}{\begin{enumerate}[{\rm(a)}]}
\newenvironment{myequation}
{\relax\setlength{\arraycolsep}{1pt}\begin{eqnarray}}
{\end{eqnarray}}
\newenvironment{myequationn}
{\relax\setlength{\arraycolsep}{1pt}\begin{eqnarray*}}
{\end{eqnarray*}}
\nc{\eq}{\begin{myequation}}
\nc{\eneq}{\end{myequation}}
\nc{\eqn}{\begin{myequationn}}
\nc{\eneqn}{\end{myequationn}}
\newcommand{\set}[2]{\left\{#1 \mathbin{;} #2 \right\}}
\nc{\Der}{\on{D}}
\nc{\Derb}{\mathrm{D}^{\mathrm{b}}}
\nc{\rmD}{\mathrm{D}}
\nc{\rb}{\mathrm{b}}
\nc{\hs}{\hspace*}
\nc{\ms}{\mspace}
\nc{\Supp}{\on{Supp}}
\nc{\tr}{\on{tr}}
\newcommand{\RD}{{\rm D}}
\nc{\RDD}{\mathrm{D}^\prime}
\nc{\conv}[1][]{\mathop{\circ}\limits_{#1}}
\nc{\sconv}[1][]{\mathop{\ast}\limits_{#1}}
\nc{\ssum}{\mathop{\mbox{\normalsize$\sum$}}}
\nc{\de}[1][X]{\delta_{#1}} 
\nc{\vs}{\vspace}
\nc{\soplus}{\mathop{\raisebox{.1ex}{\scalebox{.8}{$\displaystyle\bigoplus$}}}}
\nc{\be}{\begin{enumerate}}
\nc{\ee}{\end{enumerate}}
\nc{\stan}{\mathrm{stan}}
\nc{\Db}{\RD^\Rb}
\nc{\pt}{\mathrm{pt}}
\nc{\BBD}{\mathbb{D}}
\nc{\rC}{\mathrm{C}}
\nc{\scup}{\mathop{\text{\scriptsize\raisebox{.5ex}{$\displaystyle\bigcup$}}}}
\nc{\tX}{{\widetilde{X}}}
\nc{\Gr}{\on{Gr}}
\nc{\codim}{\on{codim}}
\DeclareMathOperator{\musupp}{\SSi}
\DeclareMathOperator{\musuppb}{\mu supp}
\DeclarePairedDelimiterX{\innerp}[2]{\langle}{\rangle}{#1,#2}
\newcommand{\phig}{\phi_\gamma}
\newcommand{\rhomc}[1][]
{{\mathscr{H}\mspace{-3mu}om}^\star_{\raise1.5ex\hbox to.1em{}#1}}
\nc{\csum}{\mathbin{\widehat{+}}}
\nc{\La}{\Lambda}
\nc{\bsout}[1]{\ber\sout{#1}\er}
\begin{document}

\title{Piecewise linear sheaves}
\author{Masaki Kashiwara and Pierre Schapira%
}

\date{}
\maketitle

\begin{abstract}
On a finite-dimensional real vector space, we give a microlocal characterization of (derived) piecewise linear sheaves (\PL sheaves) and prove that the triangulated category of such sheaves is generated by sheaves associated with convex polyhedra. We then give a similar theorem for \PL $\gamma$-sheaves, that is, \PL sheaves associated with the $\gamma$-topology, for a closed convex polyhedral  proper cone $\gamma$. Our motivation is that convex polyhedra may be considered as building blocks for higher dimensional barcodes.
 \end{abstract}
{\renewcommand{\thefootnote}{\mbox{}}
\footnote{Key words: sheaf theory,  gamma-topology, persistent homology, piecewise linear sheaves}
\footnote{MSC: 55N99, 18A99, 35A27}
\footnote{The research of M.K
was supported by Grant-in-Aid for Scientific Research (B)
15H03608, Japan Society for the Promotion of Science.}
\footnote{The research of P.S was supported by the  ANR-15-CE40-0007 ``MICROLOCAL''.}
\addtocounter{footnote}{-2}
}

\tableofcontents

\section*{Introduction}
Persistent homology is an essential tool of Topological Data Analysis appearing in numerous papers. To our opinion it may be interpreted as follows (see~\cite{KS18}). One has some data on a manifold $X$ which  define   a constructible sheaf $F$ on $X$, one has a function $f\cl X\to\R$ (playing the role of a Morse function) and one can calculate the direct image by $f$ of the data, that is, the derived direct image $\roim{f}F$. Assuming that $f$ is proper on the support of $F$, one gets a constructible (derived) sheaf on $\R$ and a variant of a theorem of  Crawley-Boevey~\cite{CB14} (see also~\cite{Gu16} and,  for the non compact case,~\cite{KS18}*{Th.~2.17}) asserts that such an object is nothing but a graded barcode. Moreover, in practice, the data on $X$ are  associated with an order and it follows that the barcodes are half-closed intervals (e.g., closed on the left and open on the right). In the langage of sheaves, this means that one gets a $\gamma$-sheaf represented by a $\gamma$-barcode, where $\gamma$ is the cone $\{t\in\R;t\leq0\}$.

However it is natural in many problems to replace the ordered set $(\R,\leq)$ with an ordered  finite dimensional vector space $\BBV$ and the order may be deduced from the data of a closed convex proper cone $\gamma$ with non--empty interior. Then it is natural to endow $\BBV$ with the so-called $\gamma$-topology $\BBV_\gamma$ introduced in~\cite{KS90}*{Ch.~III~\S~5}. 
We are lead to the study of (derived) constructible $\gamma$-sheaves (that is, sheaves on $\BBV_\gamma$) and the category of such sheaves is no more equivalent to any natural category of barcodes. The aim of this paper, a kind of  continuation of~\cite{KS18}, is to find a substitute to this non existing equivalence. 

First, we replace constructible sheaves with \PL sheaves (\PL for piecewise linear) which are much easier to manipulate. A convex polyhedron is the intersection of a finite family of open or closed affine half-spaces and 
a constructible sheaf $F$ is \PL if there is a finite covering of $\BBV$ by  convex polyhedra on which it is  constant.  
It has been proved in loc.\ cit. that constructible sheaves may be approximated, for a kind of derived  bottleneck distance, by \PL sheaves and similarly for constructible $\gamma$-sheaves. A natural higher dimensional analogue to the category of $\gamma$-barcodes is given by the additive category of finite direct sums of constant sheaves on convex $\gamma$-locally closed polyhedra and the main result of this paper 
asserts that  the triangulated category of  \PL  $\gamma$-sheaves is generated by the additive category of such $\gamma$-barcodes.

This paper also contains a systematic study of $\PL$-sheaves. We show in particular that a sheaf is \PL if and only if its microsupport is a \PL Lagrangian variety or, equivalently, is contained in such a Lagrangian variety. We note that 
 the six Grothendieck operations hold for \PL sheaves and that \PL sheaves on $\BBV$ may be considered as the restriction to $\BBV$ of  \PL sheaves on its projective compactification $\BBP$. In the course of the paper, we recall several results of loc.\ cit. that we shall need and also give a new application of the stability theorem of the bottleneck distance. 


\vspace{1.ex}\noindent
{\bf Acknowledgments}\\
The authors would like to thank the anonymous referee for many valuable suggestions and corrections.
\begin{remark}\label{preremark}
Some notations and conventions in this paper  differ from those of~\cite{KS18}. 
\banum
\item
A polyhedron was called a polytope in loc.\ cit.
\item
A \PL set or a \PL sheaf in loc.\ cit.\, is called here an \LPL set or an \LPL sheaf. 
\item
The microsupport of a sheaf $F$ is denoted here by $\SSi(F)$ as in~\cite{KS90}, instead of  $\musuppb(F)$ in~\cite{KS18}.
\eanum
\end{remark}

\section{\PL geometry}\label{section:PLgeometry}

\subsection{\PL sets and \PL stratifications}
Let $\BBV$ be a real finite-dimensional vector space. 

\begin{definition}\label{def:polyhedra}
\banum
\item
A {\em convex polyhedron} $P$ in $\BBV$ 
is the intersection of a finite family of open or closed affine half-spaces. 
\item
A \PL set is a  finite union of convex polyhedra.
 \item
 A locally  \PL  set  (an \LPL set for short)  is  a  locally finite  union of convex polyhedra.
 \eanum
\end{definition}
Note that an \LPL set is subanalytic.

The next result is obvious. 
\begin{lemma}
\bnum
\item
The family of  \PL sets  in $\BBV$ is stable by finite unions and finite intersections.
\item
If $Z$ is \PL, then  its closure $\ol Z$, its interior $\Int(Z)$ and its  complementary set $\BBV \setminus Z$ are \PL.
\item Any connected component of a \PL set is \PL. 
\item \label{item:4}
Let $u\cl\BBV\to\BBW$ be a linear map. 
\bna
\item If $S\subset\BBV$ is \PL,  then $u(S)\subset\BBW$ is \PL. \label{item:a}
\item
If $Z\subset\BBW$ is  \PL, then $\opb{u}(Z)\subset\BBV$ is  \PL.
\ee
\item
The preceding results still hold  when  when replacing $\PL$ with $\LPL$, except 
\eqref{item:a} in which case one has to assume that $u$ is proper on $\ol S$.
\eanum
\end{lemma}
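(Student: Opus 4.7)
The plan is to reduce every statement to a verification about a single convex polyhedron by using distributivity, de Morgan laws, and the fact that every PL set is by definition a finite union of convex polyhedra.

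For (i), finite union is tautological from the definition, and finite intersection follows from the identity $\bigl(\bigcup_i P_i\bigr) \cap \bigl(\bigcup_j Q_j\bigr) = \bigcup_{i,j}(P_i \cap Q_j)$ together with the observation that the intersection of two convex polyhedra is still a finite intersection of affine half-spaces, hence a convex polyhedron. For (ii), de Morgan reduces the complement to a finite intersection of complements of convex polyhedra, each of which is a finite union of open or closed affine half-spaces (so \PL); then the intersection is \PL\ by (i). The closure of a convex polyhedron is obtained by replacing each open defining half-space by its closure, so closures and \PL\ unions commute, and the interior is handled by $\Int(Z) = \BBV \setminus \overline{\BBV \setminus Z}$.

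For (iii), fixing a presentation $Z = \bigcup_i P_i$, the finite family of affine hyperplanes bounding the $P_i$ induces a finite cell decomposition of $\BBV$ into relatively open convex polyhedra, each entirely contained in or disjoint from $Z$; a connected component of $Z$ is then the union of a finite subfamily of these cells, hence \PL. For (iv)(b), the preimage of an affine half-space under a linear map is an affine half-space (or all of $\BBV$, or empty), of the same open/closed type, so $\opb{u}$ sends convex polyhedra to convex polyhedra and distributes over finite unions. For (iv)(a), the image of a convex polyhedron under a linear map is a finite union of convex polyhedra: this is Fourier--Motzkin elimination applied after decomposing $P$ into the finite disjoint union of its relatively open faces (each of which is a convex polyhedron in the sense of \ref{def:polyhedra}), and taking the union over faces.

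For (v), each of complement, closure, interior, finite intersection, finite union, and inverse image by $u$ manifestly preserves local finiteness of a family of convex polyhedra, so the arguments above transpose without change; the only point requiring comment is that a \PL\ set has only finitely many connected components by the cell argument of (iii), whereas an \LPL\ set has only locally finitely many, again by a local version of the same argument. The restriction in (iv)(a) is needed precisely because under an arbitrary linear map a locally finite family of polyhedra in $\BBV$ may accumulate in $\BBW$; if $u$ is proper on $\overline{S}$ then any compact $K \subset \BBW$ meets the image $u(P_i)$ of only finitely many members of a locally finite presentation $S = \bigcup_i P_i$, and $u(S) = \bigcup_i u(P_i)$ is therefore \LPL. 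The only mildly non-trivial step is the use of Fourier--Motzkin (and the face decomposition that handles open half-spaces) in (iv)(a); everything else is set-theoretic manipulation.
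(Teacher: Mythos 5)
The paper states this lemma without proof, dismissing it as ``obvious,'' so there is no proof in the source to compare against; you are filling a deliberately omitted argument, and what you have written is correct. Parts (i), (ii), (iii), (iv)(b), and (v) are genuinely routine, and you handle them cleanly; the one point that actually needs an argument is (iv)(a), and you correctly identify the two ingredients. Two tiny glosses worth being aware of: the description of the closure of a convex polyhedron as ``replace each open defining half-space by its closure'' is only valid when the polyhedron is nonempty (the empty intersection of strict half-spaces has empty closure, whereas the closed version can be a point), though this does not affect the conclusion since $\overline{\emptyset}$ is trivially \PL; and for the image of a relatively open face $F$ one should note that $u(F)=\on{relint}\bigl(u(\overline F)\bigr)$ (using that the image of the relative interior of a convex set is the relative interior of the image) together with classical Fourier--Motzkin on the closed polyhedron $\overline F$, which is what makes ``Fourier--Motzkin applied to a face'' actually land you on a convex polyhedron in the sense of Definition~\ref{def:polyhedra}. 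Alternatively, Fourier--Motzkin elimination works verbatim for a mixed system of strict and non-strict affine inequalities (the eliminated pair inherits strictness when at least one member is strict), so the face decomposition can be bypassed entirely if you prefer; either route is fine.
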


For a locally closed submanifold $Z\subset\BBV$, one sets for short
\eqn
&&T^*_Z\BBV\eqdot T^*_ZU \text{ where $U$ is an open subset of $\BBV$ containing $Z$ as a closed subset.}
\eneqn

\begin{definition}\label{def:PLstrat}
A  \PL-stratification of a set $S$ of $\BBV$ is a  finite  family 
$Z=\{Z_a\}_{a\in A}$ of  non-empty convex polyhedra such that
\bnum
\item  $S=\bigcup_{a\in A}Z_a$, 
\item
each $Z_a$ is a  locally closed  submanifold, 
\item 
$Z_a\cap Z_b=\varnothing$ for $a\neq b$,
\item 
$Z_a\cap \ol Z_b\neq\varnothing$ implies $Z_a\subset \ol Z_b$.
\enum
Replacing ``a finite family'' with ``a locally finite family'' we get the notion of an  \LPL-stratification.
\end{definition}

Recall  the operation  $\csum$  and the notion of a $\mu$-stratification of~\cite{KS90}*{Def.~6.2.4,~8.3.19}.
\begin{proposition}\label{pro:mustrat}
Let $Z=\{Z_a\}_{a\in A}$ be an \LPL stratification. Then 
\bnum
\item
$\{Z_a\}_{a\in A}$ is a $\mu$-stratification, that is, $Z_a\subset \ol Z_b$ implies 
$(T^*_{Z_a}\BBV\csum T^*_{Z_b}\BBV)\cap\opb{\pi}(Z_a)\subset T^*_{Z_a}\BBV$. 
\item
Set $\Lambda=\bigsqcup_{a\in A} T^*_{Z_a}\BBV$. Then $\Lambda\csum\Lambda=\Lambda$. 
\label{itm:mustrat}
\enum
\end{proposition}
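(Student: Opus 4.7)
The whole proof will reduce to one geometric observation: for two strata $Z_a,Z_b$ of an $\LPL$ stratification with $Z_a\subset\ol Z_b$, the linear direction space $V_a$ of $Z_a$ is contained in the linear direction space $V_b$ of $Z_b$. Write $V_a\subset\BBV$ for the linear part of the affine hull $A_a$ of the polyhedron $Z_a$; since $Z_a$ is a locally closed submanifold and a convex polyhedron, it is open in $A_a$, so $T_xZ_a=V_a$ for every $x\in Z_a$ and $T^*_{Z_a}\BBV=Z_a\times V_a^\perp$. To prove $V_a\subset V_b$, note that $\ol Z_b$ lies in the closed affine subspace $A_b$; then $Z_a\subset\ol Z_b\subset A_b$ forces $A_a\subset A_b$ and hence $V_a\subset V_b$, equivalently $V_b^\perp\subset V_a^\perp$.

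For part (i), recall that $A\csum B$ consists of limits $(x_0,\xi_0)$ of sums $(y_n,\eta_n)+(z_n,\zeta_n)$ with $(y_n,\eta_n)\in A$, $(z_n,\zeta_n)\in B$, $y_n,z_n\to x_0$, and $|y_n-z_n|(|\eta_n|+|\zeta_n|)\to0$. For $(x_0,\xi_0)\in(T^*_{Z_a}\BBV\csum T^*_{Z_b}\BBV)\cap\opb{\pi}(Z_a)$, one has $\eta_n\in V_a^\perp$ and $\zeta_n\in V_b^\perp\subset V_a^\perp$ by the key observation, so $\xi_0\in V_a^\perp$; combined with $x_0\in Z_a$ this gives $(x_0,\xi_0)\in T^*_{Z_a}\BBV$.

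For part (ii), the inclusion $\Lambda\subset\Lambda\csum\Lambda$ is immediate (take $(z_n,\zeta_n)=(y_n,0)$). For the reverse, given $(x_0,\xi_0)\in\Lambda\csum\Lambda$, use local finiteness of the stratification to extract subsequences on which $y_n\in Z_{a_1}$ and $z_n\in Z_{a_2}$ for fixed indices $a_1,a_2$. Let $Z_a$ be the stratum through $x_0$, which lies in $\ol Z_{a_1}\cap\ol Z_{a_2}$ (assuming, as is the intended setting, that the stratification is of a closed set, e.g.\ of $\BBV$ itself, so that every limit point belongs to some stratum). Axiom~(iv) of Definition~\ref{def:PLstrat} then yields $Z_a\subset\ol Z_{a_1}$ and $Z_a\subset\ol Z_{a_2}$; the key observation gives $V_a\subset V_{a_1}\cap V_{a_2}$, so $\eta_n,\zeta_n\in V_a^\perp$ and $\xi_0\in V_a^\perp$, i.e.\ $(x_0,\xi_0)\in T^*_{Z_a}\BBV\subset\Lambda$.

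The polyhedral flatness of the strata collapses the analysis to the elementary linear inclusion $V_a\subset V_b$; no Whitney-type argument is needed, in sharp contrast with the general subanalytic case treated in \cite{KS90}. The only real care is the subsequence extraction in (ii), which depends essentially on local finiteness; I expect this bookkeeping to be the most fiddly part of the write-up, but it is entirely routine.
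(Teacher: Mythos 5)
Your proof is correct and follows essentially the same route as the paper's: both reduce, via the product structure of a polyhedral stratum near a smaller stratum, to the linear inclusion $V_a\subset V_b$ of direction spaces (the paper phrases this by choosing coordinates so that $Z_a=\BBW$, $Z_b=\BBW\times S$, which is the same observation). You are somewhat more explicit than the paper in spelling out the $\csum$ computation, the subsequence extraction via local finiteness in (ii), and the implicit hypothesis that the stratified set is closed (needed for $\Lambda\csum\Lambda\subset\Lambda$, and satisfied in the paper's applications where $S=\BBV$), but these are refinements of the same argument rather than a different one.
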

\begin{proof}
We shall prove both statements together.

Assume that $Z_a\subset \ol Z_b\cap \ol Z_c$. We may assume (in a neighborhood of a point of $Z_a$) that 
$\BBV=\BBW\oplus\BBW'$ for two linear spaces $\BBW$ and $\BBW'$ and $Z_a=\BBW$. 
Then $Z_b=\BBW\times S$ and $Z_c=\BBW\times L$ where $S$ is open in some linear subspace $\BBW''$ of $\BBW'$ and similarly for   $L$. 
Then one immediately checks that $(T^*_{Z_b}\BBV\csum T^*_{Z_c}\BBV)\cap\opb{\pi}(Z_a)\subset T^*_{Z_a}\BBV$.

This proves (ii). Choosing $c=a$ we get (i).
\end{proof}

\begin{proposition}
Consider a  finite  family   $\{P_b\}_{b\in B}$ of convex polyhedra. Then there exists a \PL-stratification 
$\BBV=\bigsqcup_{a\in A} Z_a$ such that 
each  $P_b$ is a union of strata.
\end{proposition}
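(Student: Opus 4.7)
The plan is to reduce the proposition to the standard fact that a finite hyperplane arrangement induces a cell decomposition of $\BBV$, and to check that this decomposition satisfies the requirements of Definition~\ref{def:PLstrat}.

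\textbf{Step 1: Collect the defining hyperplanes.} Each $P_b$ is an intersection of finitely many open or closed affine half-spaces. Let $\ell_1,\dots,\ell_N$ be a finite list of affine functionals whose zero sets are the bounding hyperplanes of all half-spaces appearing in the definitions of all $P_b$, $b\in B$. Thus every $P_b$ is defined by imposing, for each $i$, one of the conditions $\ell_i>0$, $\ell_i\ge 0$, $\ell_i=0$, $\ell_i\le 0$, $\ell_i<0$, or no condition at all.

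\textbf{Step 2: Sign-vector strata.} For each $\sigma=(\sigma_1,\dots,\sigma_N)\in\{-,0,+\}^N$ set
\[
Z_\sigma\seteq\{v\in\BBV\mathbin{;}\operatorname{sign}\ell_i(v)=\sigma_i\text{ for all }i\},
\]
and let $A\seteq\{\sigma\mathbin{;} Z_\sigma\neq\varnothing\}$. The family $\{Z_\sigma\}_{\sigma\in A}$ is finite and, by uniqueness of the sign vector, the $Z_\sigma$ are pairwise disjoint and cover $\BBV$. Each $Z_\sigma$ is a convex polyhedron in the sense of Definition~\ref{def:polyhedra} (intersection of open half-spaces $\{\ell_i>0\}$ or $\{\ell_i<0\}$ with pairs of closed half-spaces $\{\ell_i\ge0\}\cap\{\ell_i\le0\}$), and it is a relatively open subset of the affine subspace $\bigcap_{\sigma_i=0}\{\ell_i=0\}$, hence a locally closed submanifold.

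\textbf{Step 3: Frontier condition.} The key point is that for $\sigma\in A$,
\[
\overline{Z_\sigma}=\{v\mathbin{;}\ell_i(v)=0\text{ if }\sigma_i=0,\ \sigma_i\ell_i(v)\ge0\text{ if }\sigma_i\neq0\},
\]
which one sees by picking $v_0\in Z_\sigma$ and forming the convex combinations $tv_0+(1-t)v$ for any $v$ in the right-hand side: for $t\in(0,1]$ they lie in $Z_\sigma$, and they converge to $v$ as $t\to 0$. Suppose now $Z_\tau\cap\overline{Z_\sigma}\neq\varnothing$. A point $w$ in the intersection has $\operatorname{sign}\ell_i(w)=\tau_i$ and also satisfies the closed conditions of $\overline{Z_\sigma}$; this forces $\tau_i=\sigma_i$ when $\sigma_i=0$ and $\tau_i\in\{0,\sigma_i\}$ when $\sigma_i\neq0$. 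With these constraints on $\tau$, every $w'\in Z_\tau$ automatically satisfies the closed conditions defining $\overline{Z_\sigma}$, so $Z_\tau\subset\overline{Z_\sigma}$.

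\textbf{Step 4: Compatibility with the $P_b$'s.} The defining conditions of $P_b$ are Boolean combinations of statements of the form $\ell_i\,\square\,0$ with $\square\in\{<,\le,=,\ge,>\}$, and each such statement has a constant truth value on each $Z_\sigma$. Hence $P_b$ is the union of those $Z_\sigma$ on which all its defining conditions hold.

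The main (and essentially only) obstacle is the frontier condition in Step~3; once the description of $\overline{Z_\sigma}$ by non-strict inequalities is established via the convex-combination argument, everything else is bookkeeping on sign vectors.
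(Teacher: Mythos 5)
Your proof is correct and is essentially the same as the paper's: both reduce the statement to the cell decomposition of $\BBV$ induced by the arrangement of bounding hyperplanes. The paper packages this decomposition via a finite family of linear forms $f_j$ together with a single partition of $\R$ into points and open intervals (the strata being intersections $\bigcap_j f_j^{-1}(I_{c_j})$), whereas you package it via sign vectors of affine functionals; the resulting decompositions coincide, and your Step~3 makes explicit the verification of the frontier condition of Definition~\ref{def:PLstrat}, which the paper asserts without detail.
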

In the sequel, an interval of $\R$ means a convex subset of $\R$.

\begin{proof}
There exists a finite family $\{f_1,\dots,f_l\}$ of linear forms and a finite family  $\{I_c\}_{c\in C}$ such that each $I_c$ is either an open interval or a point,
$\R=\bigsqcup_{\;c\in C}I_c$ and for all $b\in B$, 
\eqn
&&P_b=\bigcap_{1\leq j\leq l}\opb{f_j}(J_{j,b}), \mbox{ where $J_{j,b}$ is a union of some $I_c$, $c\in C$}.
\eneqn
For any family $d=\{c_1,\dots,c_l\}\in C^l$, set 
\eqn
&&Z_d=\bigcap_{j=1}^l\opb{f_j}(I_{c_j}).
\eneqn
Then the family $\{Z_d\}_{d\in C^l}$ is a  \PL-stratification of $\BBV$ finer that the family $\{P_b\}_{b\in B}$. 
\end{proof}

\subsection{\PL Lagrangian subvarieties}
Recall that the notions of co-isotropic, isotropic and Lagrangian  subanalytic subvarieties are given in~\cite{KS90}*{Def.~6.5.1, 8.3.9}.

\begin{proposition}\label{prop:Flinear2}
Let $\Lambda$ be a locally closed conic  \LPL  isotropic  subset of $T^*\BBV$. Then 
 for any $p\in\Lambda_\reg$ there exists 
a linear affine subspace $L\subset\BBV$ with $\Lambda\subset  T^*_L\BBV$ in a neighborhood of $p$.
\end{proposition}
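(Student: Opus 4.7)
The plan is to combine three ingredients in sequence: the \LPL hypothesis will supply, near $p$, a local affine model for $\Lambda$; conicity will force this affine model to split as a product in $T^*\BBV=\BBV\times\BBV^*$; and the isotropic condition will force the fiber factor to annihilate the direction of the base factor.

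First I would shrink to a neighborhood $U\subset T^*\BBV$ of $p$ in which $\Lambda$ is a finite union of convex polyhedra. Applying the preceding stratification proposition to this finite family (viewing $T^*\BBV$ as a real vector space) yields a \PL-stratification $\{Z_\alpha\}$ of $U$ in which $\Lambda\cap U$ is a union of strata. Since $p\in\Lambda_\reg$, the set $\Lambda$ is a $C^\infty$ manifold of some dimension $d$ at $p$, and $p$ lies in a unique stratum $Z_{\alpha_0}$. Any other stratum $Z_\beta\subset\Lambda$ with $p\in\ol{Z_\beta}$ would satisfy $Z_{\alpha_0}\subset\ol{Z_\beta}$ by property (iv) of a \PL-stratification, hence $\dim Z_\beta\geq d$; smoothness of $\Lambda$ at $p$ forces $\dim Z_\beta=d$, whence $Z_{\alpha_0}$ and $Z_\beta$ would be distinct open pieces of the same $d$-dimensional affine subspace with $Z_{\alpha_0}\subset\ol{Z_\beta}$, contradicting their disjointness. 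Shrinking $U$, we obtain $\Lambda\cap U=Z_{\alpha_0}$, an open subset of an affine subspace $A\subset T^*\BBV$.

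Next, since $\Lambda$ is conic and $A$ is its affine hull locally, $A$ is stable under the fiber scaling $(x,\xi)\mapsto(x,t\xi)$ for $t>0$. Letting $t$ tend to $0^+$ and using that $A$ is closed, one obtains $(x,0)\in A$ whenever $(x,\xi)\in A$. Writing $A=(x_0,0)+A_0$ with $A_0$ its linear direction, set $L_0=\{u\in\BBV;(u,0)\in A_0\}$ and $W=\{\xi\in\BBV^*;(0,\xi)\in A_0\}$. A short verification (using $(u,\xi)\in A_0\Rightarrow(u,0)\in A_0$ and hence $(0,\xi)\in A_0$) gives $A_0=L_0\oplus W$, so $A=L\times W$ with $L=x_0+L_0$ affine in $\BBV$.

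Finally, the isotropic condition says the symplectic form $\omega=\sum d\xi_i\wedge dx_i$ vanishes on $T_p\Lambda=L_0\oplus W$. For $u\in L_0$ and $\zeta\in W$ we read off $\langle\zeta,u\rangle=\omega((u,0),(0,\zeta))=0$, so $W$ is contained in the annihilator $L_0^\perp$ of $L_0$ in $\BBV^*$. Therefore $\Lambda\cap U\subset L\times L_0^\perp=T^*_L\BBV$, as required. The main obstacle is the first step: extracting from the \LPL plus smoothness hypothesis that $\Lambda$ coincides, near a regular point, with a relatively open piece of a single affine subspace. Once that is in hand, the remaining two steps are elementary linear symplectic algebra.
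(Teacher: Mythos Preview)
Your proof is correct and follows the same line as the paper's one-sentence argument: near a regular point the \LPL\ hypothesis makes $\Lambda$ an open piece of an affine subspace $\lambda$, and such an affine isotropic subspace of $T^*\BBV$ lies in some $T^*_L\BBV$. You have carefully spelled out the role of conicity in forcing the product form $A=L\times W$ (without which the last assertion would fail, e.g.\ for the Lagrangian line $\{(t,t)\}\subset T^*\R$), a detail the paper leaves implicit.
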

\begin{proof}
 If $\lambda$ is a linear affine isotropic subspace of $T^*\BBV$, then there exists a linear affine subspace $L$ of $\BBV$ such that $\lambda\subset T^*_L\BBV$. 
\end{proof}

\begin{lemma}\label{le:SisPL}
Let $\{L_a\}_{a\in A}$ be a  finite family of affine linear subspaces in a vector space $\BBW$. Set $X=\bigcup_{a\in A}L_a$ and let 
 $S$ be a closed subset of $X$.
 Assume that 
$S\cap X_\reg$ is open in $X_\reg$ and
$S$ is the closure of $S\cap X_\reg$. Then $S$ is \PL. 
\end{lemma}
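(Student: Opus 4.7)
The plan is to apply the \PL-stratification proposition above to the family $\{L_a\}_{a\in A}$, each $L_a$ being a convex polyhedron (intersection of closed affine half-spaces). This yields a \PL-stratification $\{Z_\beta\}_{\beta \in B}$ of $\BBW$ such that each $L_a$, and hence $X = \bigcup_a L_a$, is a union of strata. The goal is to identify $S \cap X_\reg$ as a union of finitely many strata and then take closures.

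The key technical step is to check that $X_\reg$ is itself a union of strata. Since the stratification refines the $L_a$'s, for any stratum $Z_\beta \subset X$ one has $Z_\beta \cap L_a \neq \varnothing \Leftrightarrow Z_\beta \subset L_a$, so the collection $\{a : p \in L_a\} = \{a : Z_\beta \subset L_a\}$ is constant as $p$ ranges over $Z_\beta$. Consequently the germ $(X,p)$ is, up to translation by $p$, the same arrangement of affine subspaces through $p$ for every $p \in Z_\beta$. In particular, being a regular point of $X$ is a property of the stratum, not of the particular point, and $X_\reg = \bigsqcup_{\beta \in B_\reg} Z_\beta$ for some $B_\reg \subset B$.

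To conclude, note that $X$ is closed in $\BBW$ (a finite union of affine subspaces), so $S$ is closed in $\BBW$ as well. For each $\beta \in B_\reg$ the set $S \cap Z_\beta = (S \cap X_\reg) \cap Z_\beta$ is open in $Z_\beta$ (by the hypothesis on $S \cap X_\reg$) and closed in $Z_\beta$ (as $S$ is closed in $\BBW$); since $Z_\beta$ is convex and hence connected, $S \cap Z_\beta \in \{\varnothing, Z_\beta\}$. Setting $J = \{\beta \in B_\reg : Z_\beta \subset S\}$ gives $S \cap X_\reg = \bigsqcup_{\beta \in J} Z_\beta$, and therefore
\[
S \;=\; \overline{S \cap X_\reg} \;=\; \bigcup_{\beta \in J} \overline{Z_\beta},
\]
a finite union of closed convex polyhedra, hence a \PL set.

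The main obstacle is the constancy-of-regularity observation, i.e.\ the assertion that $X_\reg$ is a union of strata of the refined stratification; once this is granted, the rest is routine bookkeeping exploiting the convexity (hence connectedness) of strata and the fact that closures of convex polyhedra remain convex polyhedra.
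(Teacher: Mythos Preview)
Your proof is correct. The paper's own proof is a single line: ``Indeed $S\cap X_\reg$ is a locally finite union of connected components of $X_\reg$.'' The underlying idea is the same as yours---$S\cap X_\reg$ is clopen in $X_\reg$, so it is a union of suitable pieces, and one takes closures---but the decompositions differ. The paper uses the connected components of $X_\reg$ directly (each of which is an open \PL subset of some $L_a$, since $X_\reg$ is the complement in $X$ of the \PL singular locus), while you first refine the arrangement $\{L_a\}$ to a \PL stratification and then argue that regularity is constant along strata. Your version is more explicit and self-contained: it makes visible why $X_\reg$ is \PL and sidesteps any appeal to the fact that connected components of \PL sets are \PL. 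The paper's version is shorter but leans on those background facts. Your ``constancy-of-regularity'' step is sound: for $p,q$ in the same stratum $Z_\beta$, the translation by $q-p$ lies in the direction of every $L_a$ containing $Z_\beta$ and hence carries the germ $(X,p)$ to $(X,q)$.
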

\Proof
Indeed $S\cap X_\reg$ is a locally finite union of connected components of
$X_\reg$. 
\QED
\begin{theorem}\label{th:PLlag1}
\banum
\item
Let $\Lambda$ be a  locally closed conic  \PL\ isotropic  subset of $T^*\BBV$. Then there exists a \PL\ stratification 
 $\{P_a\}_{a\in A}$ of $\BBV$ such that $\Lambda\subset\bigsqcup_{a\in A} T^*_{P_a}\BBV$.
\item
Let $\Lambda$ be a locally  closed conic subanalytic  Lagrangian subset of $T^*\BBV$ and assume that $\Lambda$ is contained 
in a closed conic  \PL\  isotropic subset. Then $\Lambda$ is $\PL$.
 \eanum
 \end{theorem}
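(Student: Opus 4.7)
For part (a), the plan is to use Proposition~\ref{prop:Flinear2} to obtain, at each regular point of $\Lambda$, a local affine subspace $L\subset\BBV$ with $\Lambda\subset T^*_L\BBV$. Because $\Lambda$ is \PL, its regular part has only finitely many connected components, so this yields a \emph{finite} list $L_1,\ldots,L_m$ of affine subspaces of $\BBV$. Each $L_j$ is a convex polyhedron, hence the stratification proposition just proved provides a \PL stratification $\BBV=\bigsqcup_{a\in A}P_a$ refining the family $\{L_1,\ldots,L_m\}$ in the sense that each $L_j$ is a union of strata. To check the inclusion $\Lambda\subset\bigsqcup_a T^*_{P_a}\BBV$ I would take $p=(x,\xi)\in\Lambda_\reg$ with $x\in P_a$ and pick $j$ such that $p\in T^*_{L_j}\BBV$; then $x\in P_a\cap L_j\ne\varnothing$ forces $P_a\subset L_j$, so the tangent space $T_xP_a$ is contained in the direction of $L_j$, which gives $\xi|_{T_xP_a}=0$ and therefore $p\in T^*_{P_a}\BBV$. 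Points of $\Lambda\setminus\Lambda_\reg$ are absorbed by the fact that $\Lambda=\overline{\Lambda_\reg}$ together with the frontier-to-closure passage $\overline{T^*_{P_a}\BBV}\subset\bigsqcup_b T^*_{P_b}\BBV$.

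For part (b), I would apply (a) to a closed \PL isotropic set containing $\Lambda$, obtaining a \PL stratification $\{P_a\}$ with $\overline\Lambda\subset\bigsqcup_a T^*_{P_a}\BBV$. Writing $L_a$ for the affine span of $P_a$, the set $X\eqdot\bigcup_a T^*_{L_a}\BBV$ is a finite union of affine linear subspaces of the ambient vector space $T^*\BBV$, each of dimension $\dim\BBV$, and $\overline\Lambda\subset X$. The Lagrangian hypothesis forces $\overline\Lambda$ to have pure dimension $\dim\BBV$, so $\Lambda_\reg$ is open in $X_\reg$, while subanalyticity yields $\overline\Lambda=\overline{\Lambda_\reg}$. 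Lemma~\ref{le:SisPL} then applies in the vector space $T^*\BBV$ and produces that $\overline\Lambda$ is \PL. The locally closed case follows because $\Lambda$ is open in $\overline\Lambda$ with subanalytic boundary, which is itself a union of lower-dimensional strata of the same ambient \PL structure.

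The main obstacle I foresee is the bookkeeping in part (a): one has to verify that the stratification built purely from the affine subspaces $L_j$ (without explicitly feeding in $\pi(\Lambda)$ or strata of $\Lambda_{\mathrm{sing}}$) already witnesses the inclusion $\Lambda\subset\bigsqcup_a T^*_{P_a}\BBV$ on all of $\Lambda$ and not just on $\Lambda_\reg$. This rests on the frontier-to-closure passage above, which in turn uses the stratification axiom $P_b\cap\overline{P_a}\ne\varnothing\Rightarrow P_b\subset\overline{P_a}$ to propagate the annihilation of $\xi$ from $\vec L_a$ down to $\vec L_b$ for strata $P_b$ in the boundary of $P_a$. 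Once (a) is secured, (b) is essentially a dimension/openness comparison followed by a direct application of Lemma~\ref{le:SisPL}.
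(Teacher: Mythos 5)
Your treatment of part (a) is essentially the paper's argument: apply Proposition~\ref{prop:Flinear2} on the connected components of $\Lambda_\reg$, refine the resulting finite family of affine subspaces to a \PL stratification, and then pass from $\Lambda_\reg$ to $\Lambda$ using the closedness of $\bigsqcup_a T^*_{P_a}\BBV$ (Proposition~\ref{pro:mustrat}\;\eqref{itm:mustrat}). Your extra verification at a point $p=(x,\xi)\in\Lambda_\reg$ is a correct unpacking of what the paper leaves implicit. The only cosmetic slip is writing $\Lambda=\overline{\Lambda_\reg}$ where what is needed (and true) is $\Lambda\subset\overline{\Lambda_\reg}$.

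Part (b) is where the paper is laconic (``follows from Lemma~\ref{le:SisPL}''), and your attempt to fill in the details has a real gap. To apply Lemma~\ref{le:SisPL} to $S=\overline\Lambda$ you must check that $\overline\Lambda\cap X_\reg$ is open in $X_\reg$, but what you establish is only that $\Lambda_\reg\cap X_\reg$ is open, and your justification for even that is insufficient: ``pure dimension $\dim\BBV$'' is not enough. A closed half-plane inside one of the $T^*_{L_a}\BBV$'s has pure dimension $\dim\BBV$ but is not open, so dimension alone cannot give the openness; what forces a closed conic subanalytic \emph{Lagrangian} $\Lambda$ to be open in $X_\reg$ at a point $p\in\Lambda\cap X_\reg$ is the \emph{co-isotropy} half of the Lagrangian condition. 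At such a $p$, $\Lambda$ lies locally in the Lagrangian affine plane $T^*_{L_a}\BBV$, so $C_p(\Lambda,\Lambda)\subset T_p(T^*_{L_a}\BBV)$, hence $C_p(\Lambda,\Lambda)^\perp\supset T_p(T^*_{L_a}\BBV)$; co-isotropy then forces $C_p(\Lambda)=T_p(T^*_{L_a}\BBV)$, and for a closed subanalytic set a full tangent cone gives a genuine neighbourhood. You never invoke co-isotropy, so this essential step is missing. Moreover, when $\Lambda$ is only locally closed, $\overline\Lambda$ need not be co-isotropic at points of $\overline\Lambda\setminus\Lambda$, so the openness of $\overline\Lambda\cap X_\reg$ can actually fail and the lemma does not apply to $S=\overline\Lambda$; your closing sentence, asserting that the boundary ``is itself a union of lower-dimensional strata of the same ambient \PL structure,'' is asserted without proof and is exactly the point that would need an argument. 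In short: (a) is fine and matches the paper; (b) needs the Lagrangian (not merely dimensional) hypothesis used at the openness step, and the locally-closed reduction is not yet a proof.
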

 \begin{proof}
 (a) Let $\{\Omega_i\}_{i\in I}$ be the family of  connected components of $\Lambda_\reg$. 
Note that the $\Omega_i$'s are \PL. Then there exists an affine linear subspace  $L_i$ such that $\Omega_i\subset T^*_{L_i}\BBV$
by Proposition~\ref{prop:Flinear2}. 
Choose a \PL stratification $\{P_a\}_{a\in A}$ finer  than 
the family $\{L_i\}_{i\in I}$.  Then $\Lambda_\reg\subset\bigsqcup T^*_{P_a}\BBV$ and  Proposition~\ref{pro:mustrat}\;\eqref{itm:mustrat} implies that this last set is closed, hence contains $\Lambda$. 
  
 \spa
 (b) follows from Lemma~\ref{le:SisPL} with $\BBW=T^*\BBV$.
  \end{proof}
 \begin{remark}
 In Lemma~\ref{le:SisPL} and Theorem~\ref{th:PLlag1}, all statements remain true when replacing everywhere ``finite'' with ``locally finite'' and ``\PL'' with ``\LPL''.
 \end{remark}

\section{\PL sheaves}\label{section:PLsheaves}

\subsection{Review on sheaves}
Let us recall some definitions extracted from~\cite{KS90} and a few notations.
\begin{itemize}
\item
Throughout this paper, $\cor$ denotes a field.
We denote by $\md[\cor]$ the abelian category of $\cor$-vector spaces. 
\item
For an abelian category $\shc$, we denote by $\Derb(\shc)$ its bounded derived category. However, 
we write $\Derb(\cor)$ instead of $\Derb(\md[\cor])$.
\item
For a vector bundle $E\to M$, we denote by $a\cl E\to E$ the antipodal map, $a(x,y)=(x,-y)$.
For a subset $Z\subset E$, we simply denote by $Z^a$ its image by the antipodal map.
In particular, 
for a cone $\gamma$ in $E$, we denote by $\gamma^a=-\gamma$ the opposite cone. For such a cone, we denote by $\gamma^\circ$ the polar cone (or dual cone) in the dual vector bundle $E^*$:
\eq\label{eq:polar}
&&\gamma^\circ=\{(x;\xi)\in E^*;\langle\xi,v\rangle\geq0\mbox{ for all }v\in\gamma_x\}.
\eneq
\item
Let $M$ be a real manifold  of dimension $\dim M$. We shall use freely the classical notions of microlocal sheaf theory, referring to~\cite{KS90}.
We denote by $\md[\cor_M]$ the abelian category of sheaves of $\cor$-modules on $M$ and by $
\Derb(\cor_M)$ its bounded derived category.
For short, an object of $\Derb(\cor_M)$ is called a ``sheaf'' on $M$.
\item
For a locally  closed subset $Z\subset M$, one denotes by $\cor_Z$ the constant sheaf with stalk $\cor$ on $Z$ extended by $0$ on $M \setminus Z$. One defines similarly the sheaf  $L_Z$ for $L\in\Derb(\cor)$.
\item
For $F\in\Derb(\cor_M)$ we denote by $\musupp(F)$ its singular support, or microsupport, a closed conic co-isotropic subset of $T^*M$.
\end{itemize}

\subsubsection*{Constructible sheaves}
We refer the reader to \cite{KS90} for terminologies not explained here. 
\begin{definition}
Let $M$ be a real analytic manifold and let $F \in \md[\cor_M]$.
One says that $F$ is weakly $\R$-constructible if there exists a subanalytic stratification $M=\bigsqcup_{a\in A} M_a$ such that for each  stratum $M_a$, the restriction $F\vert_{M_a}$ is locally constant.
If moreover, the stalk $F_x$ is of finite rank for all $x\in M$, then one says that $F$ is $\R$-constructible.
\end{definition}
\begin{notation}
(i) One denotes by $\mdrc[\cor_M]$ the abelian category of $\R$-constructible sheaves, a thick abelian subcategory of $\md[\cor_M]$.  

\spa
(ii) One denotes by $\Derb_{\Rc}(\cor_{M})$ the full triangulated subcategory of $\Derb(\cor_{M})$ consisting of sheaves with $\R$-constructible cohomology  and by  $\Derb_{\rcc}(\cor_{M})$ the full triangulated subcategory of $\Derb_\Rc(\cor_{M})$ consisting of sheaves with compact support.
\end{notation}
Recall that the natural functor $\Derb(\mdrc[\cor_M])\to\Derb_\Rc(\cor_{M})$ is an equivalence of categories.

\subsection{Microlocal characterization of \PL sheaves}
Recall Remark~\ref{preremark}.
\begin{definition}\label{def:Flinear}
One says that $F\in \Derb(\cor_\BBV)$ is   \PL if there exists a  finite
 family $\{P_a\}_{a\in A}$ of
convex polyhedra such that  $\BBV=\bigcup_{a\in A}P_a$ and 
$F\vert_{P_a}$ is constant of finite rank for any $a\in A$. 

Replacing the finite family $\{P_a\}_{a\in A}$ with a locally finite family, we get the notion of an \LPL sheaf.
\end{definition}
 By this definition 
\eq\label{eq:PLH0PL}
&&\parbox{70ex}{
$F$ is \PL if and only if  $H^j(F)$ is \PL for all $j\in\Z$. 
}\eneq 

One sets 
\eq\label{not:2}
&&\left\{\begin{array}{l}
\Derb_{\rcl}(\cor_{\BBV})\eqdot\{F\in\Derb(\cor_{\BBV}) ;\mbox{ $F$ is \PL}\},\\[1ex]
\mdrcl[\cor_{\BBV}]\eqdot\md[\cor_{\BBV}]\cap \Derb_{\rcl}(\cor_\BBV).
\end{array}\right.
\eneq
Of course, $\Derb_{\rcl}(\cor_{\BBV})$ is a subcategory of $\Derb_{\Rc}(\cor_{\BBV})$ and 
$\mdrcl[\cor_{\BBV}]$ is a subcategory of $\mdrc[\cor_{\BBV}]$.

\begin{proposition}
The natural functor 
$\Derb(\mdrcl[\cor_{\BBV}])\to \Derb_{\rcl}(\cor_{\BBV})$ is an equivalence.
\end{proposition}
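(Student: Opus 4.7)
The statement is the \PL counterpart of the equivalence $\Derb(\mdrc[\cor_M])\isoto\Derb_\Rc(\cor_M)$ recalled just above, and my plan is to follow exactly the same template.

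First I would establish that $\mdrcl[\cor_\BBV]$ is a \emph{thick} abelian subcategory of $\mdrc[\cor_\BBV]$, that is, it is closed under subobjects, quotients, and extensions in $\md[\cor_\BBV]$. The only non-formal closure is extensions: given an exact sequence $0\to F'\to F\to F''\to 0$ in $\md[\cor_\BBV]$ with $F',F''\in\mdrcl$, choose finite polyhedral families adapted to $F'$ and $F''$ and, by the last proposition of Section~\ref{section:PLgeometry}, refine them to a common \PL stratification $\{Z_c\}_{c\in C}$ of $\BBV$. On each $Z_c$, a convex polyhedron hence contractible, both $F'\vert_{Z_c}$ and $F''\vert_{Z_c}$ are constant of finite rank, so $F\vert_{Z_c}$ is locally constant of finite rank on a contractible space and is therefore constant of finite rank. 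Hence $F\in\mdrcl$. In particular $\Derb(\mdrcl[\cor_\BBV])$ is well defined and the canonical comparison functor makes sense.

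Second, I would invoke the standard derived-category criterion (see e.g.~\cite{KS90}*{Exercise I.23} and KS90 Section~1.7): for a thick abelian subcategory $\shc\subset\shc'$, the natural functor $\Derb(\shc)\to\Derb_\shc(\shc')$ is an equivalence provided that, for every monomorphism $F'\monoto G$ in $\shc'$ with $F'\in\shc$, there exists a morphism $G\to H$ with $H\in\shc$ such that the composition $F'\to H$ is still a monomorphism. Applied with $\shc=\mdrcl[\cor_\BBV]$ and $\shc'=\mdrc[\cor_\BBV]$, the problem reduces to constructing such an $H$ for any given $F'\monoto G$ with $F'$ \PL and $G$ merely $\R$-constructible.

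To produce $H$, start from a finite family $\{P_a\}_{a\in A}$ of convex polyhedra on which $F'$ is constant and a subanalytic stratification on which $G$ is locally constant, and take a common \PL refinement $\{Z_c\}_{c\in C}$ (using once more the last proposition of Section~\ref{section:PLgeometry}). For each stratum $Z_c$, choose $n_c\in\N$ and a map $G\vert_{\bar Z_c}\to\cor^{n_c}$ whose restriction to $Z_c$ splits the injection $F'\vert_{Z_c}\into G\vert_{Z_c}$; by adjunction for the closed embedding $\bar Z_c\hookrightarrow\BBV$, this corresponds to a map $G\to\cor_{\bar Z_c}^{n_c}$. Taking $H=\bigoplus_c\cor_{\bar Z_c}^{n_c}$, which is \PL since each $\bar Z_c$ is a closed convex polyhedron, together with the induced morphism $G\to H$, yields a map under which $F'$ injects on every stratum, hence globally. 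The main obstacle is this last step: one must arrange the refinement of stratifications, the choice of the $n_c$, and the simultaneous extensions of the stratum-wise splittings so that global injectivity of $F'\to H$ really does follow from stratum-wise injectivity. This is purely combinatorial/stratification-theoretic and uses no idea beyond what appears in the analogous proof for $\R$-constructible sheaves.
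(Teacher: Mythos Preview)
Your approach differs from the paper's, and there is a genuine gap.

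First, a minor point: you assert that $\mdrcl[\cor_\BBV]$ is closed under subobjects and quotients in $\md[\cor_\BBV]$, calling these closures ``formal''. They are false: if $U\subset\BBV$ is an open Euclidean ball, then $\cor_U$ is a subsheaf of the \PL sheaf $\cor_\BBV$ but is not \PL. What is true---and sufficient for $\Derb_{\rcl}(\cor_\BBV)$ to be triangulated---is closure under extensions and under kernels and cokernels of maps \emph{between} \PL sheaves; your argument for extensions is correct and that is all that is actually needed here.

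The real problem is the construction of $H$. You claim to produce a \PL stratification $\{Z_c\}$ refining both a polyhedral family for $F'$ and a subanalytic stratification adapted to $G$, citing the last proposition of \S\ref{section:PLgeometry}. That proposition only refines finite families of convex polyhedra; it says nothing about subanalytic sets, and in general the stratification attached to an arbitrary $G\in\mdrc[\cor_\BBV]$ admits no \PL refinement. Once $G$ is not constant on the strata $Z_c$, there is no reason for a map $G\vert_{\bar Z_c}\to\cor^{n_c}$ to exist whose restriction to $Z_c$ splits $F'\vert_{Z_c}\monoto G\vert_{Z_c}$, and you do not supply one. You flag this very step as ``the main obstacle'' and then dismiss it as ``purely combinatorial/stratification-theoretic, using no idea beyond the analogous proof for $\R$-constructible sheaves''; but the analogous proof in~\cite{KS90} does \emph{not} proceed via this effaceability criterion---it goes through triangulations---so there is no template to copy.

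By contrast, the paper's proof is a one-liner: choose a triangulation $\mathbb S=(S,\Delta)$ of $\BBV$ by \emph{linear} simplices, so that \PL sheaves on $\BBV$ coincide with sheaves constructible for some subdivision of $\mathbb S$, and invoke~\cite{KS90}*{Th.~8.1.10} directly.
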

 \begin{proof}
There exists a triangulation $\mathbb S=(S,\Delta)$
and a homeomorphism $f\cl |\mathbb S|\to \BBV$ such that
its restriction to $|\sigma|$ is linear for any $\sigma\in\Delta$.
Then the result follow from \cite[Th.~8.1.10]{KS90}.
 \end{proof}

\begin{theorem}\label{th:PL}
Let $F\in\Derb_{\Rc}(\cor_{\BBV})$. Then the conditions below are equivalent.
\banum
\item
$F\in\Derb_{\rcl}(\cor_{\BBV})$,
\item
$\musupp(F)$ is a  closed conic  \PL\ Lagrangian subset of $T^*\BBV$,
\item
$\musupp(F)$ is contained in a closed conic  \PL\  isotropic subset of $T^*\BBV$.
\eanum
\end{theorem}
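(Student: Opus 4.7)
The plan is to establish the cycle (a) $\Rightarrow$ (b) $\Rightarrow$ (c) $\Rightarrow$ (a), with the substantive content concentrated in the last implication.

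For (a) $\Rightarrow$ (b), I would begin with a finite family $\{P_b\}_{b\in B}$ of convex polyhedra covering $\BBV$ on which $F$ is constant of finite rank, refine it to a \PL stratification $\{P_a\}_{a\in A}$ using the preceding existence result, and note that $F\vert_{P_a}$ remains constant of finite rank. By Proposition~\ref{pro:mustrat}, $\{P_a\}_{a\in A}$ is a $\mu$-stratification, so the standard microlocal criterion (\cite{KS90}*{Prop.~8.4.1}) yields $\musupp(F) \subset \bigsqcup_{a} T^*_{P_a}\BBV$. Each conormal $T^*_{P_a}\BBV$ identifies with $P_a \times L_a^\perp$ for $L_a$ the affine span of $P_a$, so this union is a closed conic \PL isotropic subset of $T^*\BBV$. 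Because $F$ is \R-constructible its microsupport is Lagrangian (\cite{KS90}*{Th.~8.4.2}), and Theorem~\ref{th:PLlag1}(b) upgrades this to the statement that $\musupp(F)$ itself is \PL Lagrangian.

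The implication (b) $\Rightarrow$ (c) is immediate, as Lagrangian implies isotropic.

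For (c) $\Rightarrow$ (a), the key direction, assume $\musupp(F)$ is contained in a closed conic \PL isotropic subset $\Lambda$ of $T^*\BBV$. By Theorem~\ref{th:PLlag1}(a) there exists a \PL stratification $\{P_a\}_{a\in A}$ of $\BBV$ with $\Lambda \subset \bigsqcup_{a} T^*_{P_a}\BBV$, and by Proposition~\ref{pro:mustrat} this family is a $\mu$-stratification. The microlocal criterion (\cite{KS90}*{Prop.~8.4.1}) then gives that $F\vert_{P_a}$ is locally constant of finite rank for every $a \in A$. Since each $P_a$ is a convex (hence contractible) connected locally closed submanifold, any such sheaf is automatically constant, and therefore $F$ is \PL.

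No step presents a serious obstacle: the heavy lifting has been done in Theorem~\ref{th:PLlag1} and Proposition~\ref{pro:mustrat}, which together extract a \PL stratification from a \PL isotropic set and certify it as a $\mu$-stratification. What remains is the standard microlocal criterion for constructibility from \cite{KS90}, combined with the elementary fact that locally constant sheaves on contractible bases are constant.
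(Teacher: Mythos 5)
Your proof is correct and rests on exactly the same ingredients as the paper's: Theorem~\ref{th:PLlag1} parts (a) and (b), Proposition~\ref{pro:mustrat}, and the microlocal criterion \cite{KS90}*{Prop.~8.4.1}; the only difference is cosmetic, namely that you arrange the implications as (a)$\Rightarrow$(b)$\Rightarrow$(c)$\Rightarrow$(a) whereas the paper proves (a)$\Rightarrow$(c), (b)$\Leftrightarrow$(c), and (b)$\Rightarrow$(a). You are a bit more explicit than the paper about two small points — that a locally constant sheaf on a convex polyhedron is constant, and that the union of conormals over a $\mu$-stratification is closed — but these are routine and the substance of the argument is the same.
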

 \begin{proof}
 (a)$\Rightarrow$(c)
 Consider a  finite covering $\{P_b\}_{b\in B}$  by convex polyhedra such that $F\vert_{P_b}$ is constant and choose a finer \PL stratification $\BBV=\bigsqcup_{a\in A}Z_a$. This is a $\mu$-stratification and this implies $\musupp(F)\subset\bigsqcup_{a\in A}T^*_{Z_a}\BBV$ by~\cite{KS90}*{Prop.~8.4.1}. 

 \spa
 (b)$\Rightarrow$(a) By Theorem ~\ref{th:PLlag1}~(a), there exists a \PL stratification  $\BBV=\bigsqcup_{a\in A}Z_a$ such that 
 $\musupp(F)\subset \bigsqcup_{a\in A}T^*_{Z_a}\BBV$. Then $F\vert_{Z_a}$ is locally constant for each $a\in A$ by~\cite{KS90}*{Prop.~8.4.1}.

 \spa 
 (b)$\Leftrightarrow$(c) in view of Theorem~\ref{th:PLlag1}~(b).
 \end{proof}
  
The next result immediately follows from Definition~\ref{def:Flinear}. It can also easily be deduced from ~\cite{KS90}, Theorem~\ref{th:PLlag1} and Theorem~\ref{th:PL}.
\begin{corollary}\label{cor:PL}
\bnum
\item
The category $\Derb_{\rcl}(\cor_{\BBV})$ is a full triangulated subcategory of the category $\Derb(\cor_{\BBV})$ and the category $\mdrcl[\cor_{\BBV}]$ is a full thick abelian subcategory of the category $\md[\cor_{\BBV}]$.
\item 
If $F_1$ and $F_2$ are \PL, then so are $F_1\tens F_2$ and $\rhom(F_1,F_2)$.
\item
 Let $f\cl \BBV\to\BBW$ be a linear map.
\bna
\item If $G$ is a PL\ sheaf on $\BBW$, then $\opb{f}G$ and $\epb{f}G$ are 
 \PL\ sheaves  on $\BBV$.
\item If $F$ is a \PL\ sheaf on $\BBV$ then $\roim{f}F$  and $\reim{f}F$ are  \PL\ sheaves on $\BBW$.
\ee
\enum
\end{corollary}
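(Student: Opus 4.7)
The plan is to combine two parallel approaches: a direct combinatorial argument via common refinements of PL-stratifications, which is transparent for the six operations, and a microlocal argument based on Theorem~\ref{th:PL}, which deals cleanly with the triangulated and exact-sequence statements.

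For (i), closure of $\Derb_{\rcl}(\cor_{\BBV})$ under shifts is immediate. For cones, I will invoke the standard estimate $\musupp(H)\subset\musupp(F)\cup\musupp(G)$ for any distinguished triangle $F\to G\to H\to[+1]$ in $\Derb(\cor_{\BBV})$: a finite union of closed conic \PL\ isotropic subsets of $T^*\BBV$ is again such, so Theorem~\ref{th:PL}~(c)$\Rightarrow$(a) applied to $H$ finishes the job. For $\mdrcl[\cor_{\BBV}]$ as an abelian subcategory of $\md[\cor_{\BBV}]$, I will refine the defining \PL-coverings of $F$ and $G$ to a common \PL-stratification $\{Z_a\}$ on which both are constant of finite rank. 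The morphism $F\to G$ is then encoded on each $Z_a$ by a linear map of finite-dimensional $\cor$-vector spaces, whose kernel and cokernel are constant on $Z_a$, giving \PL\ sheaves on $\BBV$. Thickness reduces to closure under extensions (the abelian version of the triangulated statement) and under direct summands (automatic from the abelian structure).

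For (ii), the same common \PL-stratification $\{Z_a\}$ on which $F_1$ and $F_2$ are simultaneously constant of finite rank shows that $F_1\tens F_2$ and $\rhom(F_1,F_2)$ are constant of finite rank on each $Z_a$, hence \PL. For (iii), if $G$ is constant on a \PL-covering $\{Q_b\}$ of $\BBW$, then $\opb{f}G$ is constant on the \PL-covering $\{\opb{f}(Q_b)\}$ of $\BBV$, since the preimage of a convex polyhedron under a linear map is a convex polyhedron. The sheaf $\epb{f}G$ differs from $\opb{f}G$ by tensor with a constant orientation sheaf up to shift, hence is also \PL. For $\roim{f}F$ and $\reim{f}F$, I would reduce by a filtration of $F$ along a \PL-stratification to the case $F=\cor_P$ with $P$ a convex polyhedron, then compute fibrewise using that $\opb{f}(y)\cap P$ is always a convex polyhedron (contractible or empty); this identifies $\roim{f}\cor_P$ and $\reim{f}\cor_P$ as locally constant sheaves (up to shift) on the \PL\ set $f(P)\subset\BBW$.

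The hardest step in this plan is the pushforward in (iii): for a general non-proper linear map, the standard microsupport estimate for $\roim{f}$ requires properness on the support of $F$, so one cannot simply transport closed conic \PL\ isotropic sets along $f$. The fibrewise cohomology argument above sidesteps this issue by exploiting the contractibility of polyhedral fibres together with the fact, established in the first lemma of Section~\ref{section:PLgeometry}, that linear images of \PL\ sets are \PL.
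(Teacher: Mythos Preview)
The paper does not give a detailed proof, noting only that the result follows directly from Definition~\ref{def:Flinear} or, alternatively, from the microlocal characterization of Theorem~\ref{th:PL} combined with the standard estimates of~\cite{KS90}. Your arguments for (i), for (ii), and for $\opb{f}G$ in (iii)(a) are in this spirit and are correct. There are, however, two genuine gaps.

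For $\epb{f}$: the identity $\epb{f}G\simeq\opb{f}G\tens\omega_{\BBV/\BBW}$ that you invoke holds only when $f$ is a submersion. For the closed embedding $i\cl\{0\}\into\R$ and $G=\cor_{[0,\infty)}$ one has $\opb{i}G\simeq\cor$ but $\epb{i}G\simeq0$. The fix is immediate once (ii) is available: since $G$ is constructible, $\epb{f}G\simeq\RDD_\BBV\bl\opb{f}(\RDD_\BBW G)\br$, and $\RDD=\rhom(\scbul,\cor[\dim])$ preserves \PL\ by (ii).

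For $\reim{f}$ and $\roim{f}$: contractibility of the fibres $P\cap\opb{f}(y)$ does \emph{not} imply that $\reim{f}\cor_P$ is locally constant on $f(P)$. The stalk $(\reim{f}\cor_P)_y\simeq\rsectc\bl P\cap\opb{f}(y);\cor\br$ depends on which faces of the polyhedral fibre are open or closed, not merely on non-emptiness. For instance, with $P=\{(x,y)\in\R^2;0\le x\le1,\ 0\le y<1\}$ and $f(x,y)=x+y$ one has $f(P)=[0,2)$ while $\reim{f}\cor_P\simeq\cor_{[0,1)}$, which is not locally constant on $f(P)$. For $\roim{f}$ the situation is worse: without properness the stalk at $y$ is a colimit over neighbourhoods, and $\rsect(\opb{f}(V);\cor_P)$ is \emph{not} $\rsect(\opb{f}(V)\cap P;\cor)$ when $P$ fails to be closed. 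A correct direct argument requires a finer \PL\ stratification of $\BBW$ on which the combinatorial type of the fibre (hence its $\rsectc$) is constant; alternatively, one reduces to a proper map via the projective compactification and then applies the microsupport estimate of~\cite{KS90} together with Theorem~\ref{th:PL}, which is the route the paper points to.
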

Recall that the convolution functor  $\star\cl \Derb(\cor_\BBV)\times \Derb(\cor_\BBV)\to\Derb(\cor_\BBV)$ is defined  by the formula:
\eqn
F\star G&\eqdot&\reim{s}(F\etens G),
\eneqn
where $s\cl\BBV\times\BBV\to\BBV$ is the map $(x,y)\mapsto x+y$. 

\begin{corollary}\label{cor:PL2}
The convolution functor induces a functor 
 $\star\cl \Derb_{\rcl}(\cor_\BBV)\times \Derb_{\rcl}(\cor_\BBV)\to\Derb_{\rcl}(\cor_\BBV)$.
\end{corollary}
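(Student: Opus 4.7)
The plan is to show that the convolution $F \star G = \reim{s}(F \etens G)$ lands in $\Derb_{\rcl}(\cor_\BBV)$ by decomposing its construction into operations that we have already shown preserve the \PL property in Corollary~\ref{cor:PL}. The key observation is that $\BBV \times \BBV$ is itself a finite-dimensional real vector space, so the entire \PL formalism (sheaves, polyhedra, linear maps) applies there just as it does on $\BBV$.

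First, I would handle the external tensor product $F \etens G$ on $\BBV \times \BBV$. Writing $p_1, p_2 \cl \BBV \times \BBV \to \BBV$ for the two projections, we have
\[
F \etens G \;\simeq\; \opb{p_1} F \Tens \opb{p_2} G.
\]
The projections $p_1$ and $p_2$ are linear maps, so by Corollary~\ref{cor:PL}(iii)(a) the pullbacks $\opb{p_1} F$ and $\opb{p_2} G$ are \PL sheaves on $\BBV \times \BBV$. Then by Corollary~\ref{cor:PL}(ii) their tensor product $F \etens G$ is \PL on $\BBV \times \BBV$.

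Second, the sum map $s \cl \BBV \times \BBV \to \BBV$, $(x,y) \mapsto x+y$, is linear. Applying Corollary~\ref{cor:PL}(iii)(b) to $s$ and to the \PL sheaf $F \etens G$ yields that $\reim{s}(F \etens G) = F \star G$ is \PL on $\BBV$. This gives the desired factorization of the convolution through $\Derb_{\rcl}$.

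I do not foresee a genuine obstacle: Corollary~\ref{cor:PL} was stated for any linear map between real finite-dimensional vector spaces, which covers both the projections and the sum map. The only small point to verify, if one wishes to be completely explicit, is that Definition~\ref{def:Flinear} and the conclusions of Corollary~\ref{cor:PL} transfer verbatim to $\BBV \times \BBV$, but this is immediate since the whole of Section~\ref{section:PLgeometry} and Section~\ref{section:PLsheaves} up to this point was developed for an arbitrary real finite-dimensional vector space.
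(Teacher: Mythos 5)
Your proof is correct and is precisely the argument the paper has in mind: Corollary~\ref{cor:PL2} is stated without proof because it follows immediately from Corollary~\ref{cor:PL} by factoring the convolution through the external tensor product on $\BBV\times\BBV$ and the linear sum map, exactly as you lay out.
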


\begin{proposition}
Let $Z$ be a locally closed subset of $\BBV$. Then $Z$ is \PL if and only if $\musupp(\cor_Z)$ is \PL.
\end{proposition}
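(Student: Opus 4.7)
The statement is an equivalence; my plan is to prove each direction separately, the converse being the substantive half.

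For the forward direction, I would realize $\cor_Z$ as a \PL sheaf by hand. Writing $Z=\bigcup_b P_b$ as a finite union of convex polyhedra, I would apply the preceding stratification proposition to the family $\{P_b\}$ to obtain a \PL-stratification $\BBV=\bigsqcup_a Z_a$ in which each $P_b$, and hence $Z$, is a union of strata. On each stratum $\cor_Z|_{Z_a}$ is then either $\cor_{Z_a}$ or $0$, so $\cor_Z\in\Derb_{\rcl}(\cor_\BBV)$ by Definition \ref{def:Flinear}; Theorem \ref{th:PL}(a)$\Rightarrow$(b) gives that $\musupp(\cor_Z)$ is a closed conic \PL Lagrangian subset of $T^*\BBV$, in particular \PL.

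For the converse direction, let $\Lambda\eqdot\musupp(\cor_Z)$ and assume $\Lambda$ is \PL. My plan has three steps: \textbf{(i)} show $\cor_Z\in\Derb_{\Rc}(\cor_\BBV)$; \textbf{(ii)} apply Theorem \ref{th:PL}(c)$\Rightarrow$(a), taking $\Lambda$ itself as the containing \PL isotropic subset, to get $\cor_Z\in\Derb_{\rcl}(\cor_\BBV)$; \textbf{(iii)} read off the polyhedral structure of $Z$ from the covering furnished by Definition \ref{def:Flinear}. For step (iii), any finite cover $\BBV=\bigcup_a P_a$ by convex polyhedra with each $\cor_Z|_{P_a}$ constant of finite rank must, because the stalk of $\cor_Z$ at $x$ equals $\cor$ when $x\in Z$ and $0$ otherwise, satisfy either $P_a\subset Z$ (rank $1$) or $P_a\cap Z=\varnothing$ (rank $0$). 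Then $Z=\bigcup_{P_a\subset Z}P_a$ is visibly \PL.

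Step (i) is where I expect the work to lie. Since $\Lambda$ is \PL it is subanalytic, and it is co-isotropic because microsupports always are (\cite{KS90}*{Thm.~6.5.4}); combining these with the subanalytic involutivity theorem should place $\Lambda$ inside a closed conic subanalytic Lagrangian subset, so that the microlocal characterization \cite{KS90}*{Thm.~8.4.2} gives that $\cor_Z$ is weakly $\R$-constructible; the two-valued stalks then upgrade this to $\R$-constructibility. The same reasoning also shows $\Lambda$ is isotropic, which is exactly what step (ii) needs to invoke Theorem \ref{th:PL}(c). Everything else is elementary; the real crux is this passage from ``\PL microsupport'' to ``$\R$-constructible sheaf''.
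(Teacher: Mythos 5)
Your forward direction matches the paper. For the converse there are two points of comparison, one a genuine simplification and one a genuine gap.

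Your step (iii) differs from the paper's route. The paper, having shown $\ol Z=\pi(\musupp(\cor_Z))$ is \PL, passes through the short exact sequence $0\to\cor_Z\to\cor_{\ol Z}\to\cor_{\partial Z}\to0$ to conclude first that $\cor_{\partial Z}$ is a \PL sheaf and then that $\partial Z$, hence $Z$, is \PL. Your observation --- that from \emph{any} finite covering of $\BBV$ by convex polyhedra on which $\cor_Z$ is constant one reads off $Z=\bigcup\{P_a:P_a\subset Z\}$ directly, since each $P_a$ is entirely in $Z$ or entirely outside it --- is simpler and in fact makes the short exact sequence detour unnecessary. This is a modest but real streamlining.

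Step (i), however, does not work as written. You correctly identify that one must first establish $\cor_Z\in\Derb_{\Rc}(\cor_\BBV)$ before Theorem \ref{th:PL} applies (the theorem's standing hypothesis), and this is a point the paper's own proof passes over in silence when it asserts ``$\cor_Z$ is a \PL sheaf.'' But your proposed justification --- ``$\Lambda$ is subanalytic and coisotropic, so by the subanalytic involutivity theorem it sits inside a closed conic subanalytic Lagrangian'' --- conflates two opposite dimensional constraints. Involutivity (\cite{KS90}*{Thm.~6.5.4}) says the microsupport is \emph{co}isotropic, i.e.\ of dimension $\geq\dim\BBV$ at smooth points; containment in a Lagrangian would force dimension $\leq\dim\BBV$. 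No involutivity-type theorem converts the former into the latter; containment of $\musupp(F)$ in a subanalytic Lagrangian is \emph{equivalent} to weak $\R$-constructibility of $F$ (\cite{KS90}*{Thm.~8.4.2}), which is precisely what you are trying to prove. Consequently the sentence ``the same reasoning also shows $\Lambda$ is isotropic,'' on which your step (ii) relies, is circular: you cannot take $\Lambda$ itself as the \PL isotropic set in Theorem \ref{th:PL}(c) until you already know $\cor_Z$ is constructible. The $\R$-constructibility of $\cor_Z$ from the \PL hypothesis on $\musupp(\cor_Z)$ needs a separate argument, which neither your proposal nor, admittedly, the paper's terse proof supplies explicitly.
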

\begin{proof}
(i) Assume that $Z$ is \PL. Then the sheaf $\cor_Z$ is \PL and its microsupport is \PL by Theorem~\ref{th:PL}.

\spa
(ii) Conversely, assume that  $\musupp(\cor_Z)$ is \PL. Set $\partial Z=\ol Z\setminus Z$. Since $Z$ is locally closed, $\partial Z$ is closed.

\spa
(ii)--(a) First, notice that $\ol Z=\pi(\musupp(\cor_Z))$ is \PL. 

\spa
(ii)--(b) Now consider the exact sequence of sheaves $0\to\cor_Z\to\cor_{\ol Z}\to\cor_{\partial Z}\to 0$. Since  $\cor_Z$ and $\cor_{\ol Z}$ are \PL sheaves, the sheaf $\cor_{\partial Z}$ is \PL. Therefore, $\partial Z$ is \PL and it follows that $Z$ is \PL.
\end{proof}

\subsection{\PL sheaves on the projective space}

Let $\BBV\into\BBP$ be the projective compactification of $\BBV$. Hence, setting $\BBW=\BBV\times\R$, 
\eqn
&&\BBP\simeq(\BBW\setminus\{0\})/\R^\times,
\eneqn
where $\R^\times$ is the multiplicative group $\R\setminus\{0\}$. 
Denote by $\pi\cl \BBW\setminus\{0\}\to\BBP$ the projection and by
 $\iota\cl \BBW\setminus\{0\}\into\BBW$ the embedding. 

We shall say that  a subset $A$ of $\BBP$ is $\PL$ if $\iota(\opb{\pi}(A))$ is \PL in  $\BBW$.

Similarly, one defines the category of \PL sheaves $\Derb_{\rcl}(\cor_\BBP)$ as the full subcategory of 
$\Derb_\Rc(\cor_\BBP)$ consisting of objects $F$ such that $\eim{\iota}\opb{\pi}F\in \Derb_{\rcl}(\cor_{\BBW})$. 

Denote by $j\cl\BBV\into\BBP$ the open embedding and by $H_\infty=\BBP\setminus j(\BBV)$ the hyperplane at infinity.
The next result is easy and its proof is left to the reader.
\begin{proposition}
The functor $\eim{j}\cl \Derb_{\rcl}(\cor_\BBV)\to \Derb_{\rcl}(\cor_\BBP)$ is well-defined, is fully faithful and its essential image consists of objects $F$ such that $F\vert_{H_\infty}\simeq0$.
\end{proposition}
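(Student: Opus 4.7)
Let $F\in\Derb_{\rcl}(\cor_\BBV)$ be witnessed by a finite covering $\BBV=\bigcup_{a\in A}P_a$ by convex polyhedra on each of which $F$ is constant of finite rank; I will exhibit an explicit finite polyhedral cover of $\BBW=\BBV\times\R$ on which $\eim{\iota}\opb{\pi}\eim{j}F$ is constant. Set $U^\pm=\{(v,t)\in\BBW:\pm t>0\}$. The sheaf $\eim{\iota}\opb{\pi}\eim{j}F$ vanishes on $\BBV\times\{0\}$ (since $\eim{j}F$ vanishes on $H_\infty$ and $\eim{\iota}$ extends by zero at the origin), while on $U^\pm$ it is the pullback of $F$ along $p^\pm\cl U^\pm\to\BBV$, $(v,t)\mapsto v/t$. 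For each $P_a$, written as a finite intersection of half-space conditions of the form $f_j(v)\le c_j$ or $f_j(v)<c_j$ with $f_j$ linear, the preimage $P_a^\pm\eqdot\{(v,t)\in U^\pm:v/t\in P_a\}$ is cut out in $\BBW$ by $\pm t>0$ together with the corresponding conditions $\pm(f_j(v)-c_jt)\le 0$ (respectively $<0$), and hence is a convex polyhedron in $\BBW$. Thus $\{P_a^\pm\}_{a\in A,\pm}\cup\{\BBV\times\{0\}\}$ is the desired finite polyhedral cover.

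\textbf{Full faithfulness.} Since $j$ is an open embedding, $\opb{j}\eim{j}\simeq\id$, so the adjunction $(\eim{j},\opb{j})$ yields $\Hom(\eim{j}F,\eim{j}G)\simeq\Hom(F,\opb{j}\eim{j}G)\simeq\Hom(F,G)$.

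\textbf{Essential image.} The sheaf $\eim{j}F$ always vanishes on $H_\infty$, giving one inclusion. Conversely, let $G\in\Derb_{\rcl}(\cor_\BBP)$ satisfy $G\vert_{H_\infty}\simeq 0$, and write $i_\infty\cl H_\infty\into\BBP$ for the closed embedding. The open-closed distinguished triangle
\eqn
&&\eim{j}\opb{j}G\to G\to\oim{i_\infty}\opb{i_\infty}G\to[+1]
\eneqn
combined with $\opb{i_\infty}G\simeq 0$ gives $G\simeq\eim{j}\opb{j}G$. It then remains to check that $\opb{j}G$ is \PL on $\BBV$: via the affine section $s\cl\BBV\to\BBW$, $v\mapsto(v,1)$, one has $\opb{j}G\simeq\opb{s}(\eim{\iota}\opb{\pi}G)$, and since the pullback of a convex polyhedron by an affine map is a convex polyhedron, $\opb{s}$ preserves \PL sheaves.

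The main obstacle is the polyhedral bookkeeping in the first step, tracking open versus closed half-spaces across the two affine charts $U^\pm$; the remaining parts follow formally from the $(\eim{j},\opb{j})$ adjunction and the open-closed recollement triangle.
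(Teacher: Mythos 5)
Your proof is correct; the paper omits the argument ("left to the reader"), and what you wrote is precisely the natural one: for well-definedness you pass to the affine charts $U^\pm$ and compute the preimages of the defining half-spaces of each $P_a$ explicitly (the sign bookkeeping $\pm(f_j(v)-c_jt)\le 0$ is handled correctly, and the vanishing on $\BBV\times\{0\}$ takes care of the remaining stratum); full faithfulness is the formal $(\eim{j},\opb{j})$ adjunction with $\opb{j}\eim{j}\simeq\id$; and for the essential image you combine the recollement triangle with the observation that $\opb{j}G\simeq\opb{s}(\eim{\iota}\opb{\pi}G)$ for the affine section $s(v)=(v,1)$, which preserves \PL sheaves since affine preimages of convex polyhedra are convex polyhedra. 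The only point left tacit is that $\eim{j}F\in\Derb_\Rc(\cor_\BBP)$, as the definition of $\Derb_{\rcl}(\cor_\BBP)$ requires; this is immediate from your explicit polyhedral cover of $\BBW$ together with the fact that $\pi$ is a submersion (or, equivalently, by restricting to affine slices of $\BBW\setminus\{0\}$), so it is a harmless omission rather than a gap.
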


\subsection{Distance, approximation and stability}
The bottleneck distance for persistent modules  is a classical subject that we shall not review here,
referring  to~\cite{CEH07} and~\cites{CCGGO09, CSGO16, Cu13, EH08, Gri08, Le15}. Here, we use a convolution distance for sheaves similar to that of 
\cite{KS18} and slightly different from the classical ones 
since it is defined in the derived setting. Note that this ``derived'' distance has recently been studied with great details 
in~\cite{BG18} in case of dimension one.

Assume that  the vector space $\BBV$  is endowed 
with a norm $\vvert\cdot\vvert$ (see Remark~\ref{Rem:norm} below). 
We define a family of sheaves $\{K_a\}_{a\in\R}$  as follows: 
\eq\label{eq:Ka}
K_a\simeq 
\begin{cases}\cor_{\{\Vert x\Vert\leq a\}}&\text{for $a\geq0$,}\\
\cor_{\{\Vert -x\Vert<-a\}}[\dim\BBV]&\text{for $a<0$}.
\end{cases}
\eneq
There are natural morphisms and isomorphisms
\eq\label{eq:KaKb}
&&
\ba{l}
\chi_{a,b}\cl K_b\to K_a \mbox{ for }a\leq b\in\R,\\[1ex]
K_a\star K_b\simeq K_{a+b}\mbox{ for }a,b\in\R
\ea
\eneq
 such that $\chi_{a,b}\circ\chi_{b,c}=\chi_{a,c}$ for $a\le b\le c$. 
\begin{remark}\label{Rem:norm}
In~\cite{KS18} we have used the Euclidean norm, but the argument works for any norm, since~\eqref{eq:KaKb} remains true.
Here a norm $\Vert\cdot\Vert\cl \BBV\to \R_{\ge0}$
satisfies
(1) $\Vert x+y\Vert\le\Vert x\Vert+\Vert y\Vert$,
(2) $\Vert ax\Vert=a\Vert x\Vert$ for $a\ge0$, and
(3) $\Vert x\Vert=0$ implies $x=0$.
Hence norms correspond bijectively with
open relatively compact convex neighborhood of $0$.
Note that we do not ask $\Vert x\Vert=\Vert -x\Vert$ and that is why we define $K_a$ for $a<0$ as above, in order that 
$K_a\star K_{-a}\simeq K_0$.
\end{remark}

\begin{definition}{(\cite{KS18}*{Def.~3.2})}\label{def:convdist}
Let $F, G \in \Derb(\cor_{\BBV})$ and let $a\geq0$.
One says that $F$ and $G$ are {\em $a$-isomorphic} if there are morphisms 
$f\cl K_{a}\star F\to G$ and $g\cl K_a\star G\to F$ 
which satisfies the following compatibility conditions:
the composition 
$K_{2a}\star F\To[K_a\star f] K_a\star G\To[g] F$
 coincides with  the natural morphism $\chi_{0,2a}\star F\cl
K_{2a}\star F\to F$ and the composition 
$K_{2a}\star G\To[K_a\star g] K_a\star F\To[f] G$
coincides with  the natural morphism $\chi_{0,2a}\star G\cl K_{2a}\star G\to G$. 
 \end{definition}

One sets 
\eqn&&\dist(F,G)=\inf\Bigl(\{+\infty\}\cup\{a\in\R_{\geq0}\,;\,
\text{$F$ and $G$ are $a$-isomorphic}\}\Bigr)
\eneqn
and calls $\dist(\scbul,\scbul)$ the \define{convolution distance}.

In loc.\ cit.\ we have proved that any object of $\Derb_{\Rc}(\cor_\BBV)$ can be approximated with \LPL-sheaves. A similar result holds for constructible sheaves on $\BBV_\infty$.

\begin{notation}\label{not:Vinfty}
Let us denote by $\Derb_{\Rc}(\cor_{\BBV_\infty})$ the full subcategory of $\Derb_{\Rc}(\cor_\BBV)$ consisting of 
objects $F$ such that there exists $G\in\Derb_{\Rc}(\cor_\BBP)$ with $F\simeq G\vert_\BBV$. 
\end{notation}
\begin{theorem}[{The approximation theorem}]\label{th:approx}
Let  $F\in\Derb_\Rc(\cor_{\BBV_\infty})$. For each $\epsilon>0$ there exists 
$G\in \Derb_{\rcl}(\cor_\BBV)$ such that $\dist(F,G)\leq\epsilon$
and $\supp(G)\subset \supp(F)+B_\eps$, where  $B_\eps=\{x;\vvert x\vvert\leq\eps\}$. 
\end{theorem}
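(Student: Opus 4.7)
My approach is to reduce to the \LPL-approximation theorem of \cite{KS18} and then upgrade the resulting \LPL-sheaf to a \PL-sheaf by exploiting the compactness of $\BBP$.

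The first step is to fix an extension $\tilde F\in\Derb_\Rc(\cor_\BBP)$ with $\tilde F\vert_\BBV\simeq F$, together with a finite subanalytic stratification $\shs=\{S_\alpha\}_{\alpha\in A}$ of $\BBP$ to which $\tilde F$ is constructible. Because $\BBP$ is compact, the index set $A$ is finite. In particular the behaviour of $F$ outside a sufficiently large closed ball is governed by the finitely many strata of $\shs$ meeting $H_\infty$, so $F$ is, in a precise sense, asymptotically invariant along cones at infinity.

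The second step is to apply the \LPL-approximation theorem of \cite{KS18} to $F$ with parameter $\epsilon$, obtaining an \LPL-sheaf $G_0\in\Derb(\cor_\BBV)$ satisfying $\dist(F,G_0)\leq\epsilon$ and $\supp(G_0)\subset \supp(F)+B_\eps$. Before refining $G_0$ I would choose the locally finite polyhedral covering used in that theorem to be simultaneously compatible with the trace stratification $\{S_\alpha\cap\BBV\}_{\alpha\in A}$; this is possible by cutting polyhedra along the hyperplanes defining the strata of $\shs$ in an affine chart of $\BBP$ near $H_\infty$.

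The third step is to replace $G_0$ by a genuinely \PL-sheaf $G$. For $R$ large enough, inside $\clos{B_{R+1}}$ the covering underlying $G_0$ consists of finitely many polyhedra by local finiteness and compactness. Outside $\clos{B_R}$, the asymptotic invariance above lets us replace the locally finite covering by the finite collection of convex cones obtained from coning, at the origin, the traces of the $S_\alpha$'s on the sphere of radius $R$. A Mayer--Vietoris argument over the collar $\clos{B_{R+1}}\setminus\Int(B_R)$ then glues these two finite descriptions into a \PL-sheaf $G$. One checks along the way that the gluing preserves the $\epsilon$-isomorphism with $F$ and the support bound.

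The main obstacle will be controlling the gluing at infinity: verifying that replacing the locally finite polyhedral family outside $\clos{B_R}$ by a finite family of cones does not increase the convolution distance beyond $\epsilon$. This uses crucially that $\tilde F$ is $\R$-constructible on the compact space $\BBP$, which yields both the finiteness of strata and the asymptotic conicality of $F$ needed to make the conification step essentially cost-free in the convolution distance.
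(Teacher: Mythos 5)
Your core insight—exploit the compactness of $\BBP$ to gain finiteness—is the right one, and it is exactly what the paper uses, but you insert it in a different place. You apply the \LPL approximation theorem of \cite{KS18} as a black box and then try to upgrade the output $G_0$ to a \PL sheaf by surgery near infinity. The paper's proof is a one-line modification of the proof in \cite{KS18}: because $F\in\Derb_\Rc(\cor_{\BBV_\infty})$, the simplicial complex $\bS=(S,\Delta)$ used there can be chosen \emph{finite} from the start (it comes from a finite triangulation of the compact $\BBP$ compatible with a stratification of an extension $\tilde F$), after which the same construction yields a \PL, rather than merely \LPL, sheaf directly, with no gluing or post-processing.

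Your post-processing route has a real gap at the gluing step. Nothing in the conclusion of the \LPL theorem ties $G_0$ to the conical structure of $F$ at infinity, so there is no reason for the locally finite covering of $G_0$ to agree on the collar $\clos{B_{R+1}}\setminus\Int(B_R)$ with the finite conical covering you propose to use outside $\clos{B_R}$; without agreement on the overlap the Mayer--Vietoris gluing you invoke does not produce a sheaf. You would therefore not be \emph{modifying} $G_0$ but building a new object, and the assertion that the conification is ``essentially cost-free'' in the convolution distance is unsupported—replacing a sheaf by an unrelated conical sheaf at large distances is not automatically controlled by any $\eps$, since the convolution distance is a global quantity. The attempt in your second step to make the covering ``compatible with the trace stratification by cutting along hyperplanes'' does not repair this: the strata of a subanalytic stratification of $\BBP$ near $H_\infty$ are not bounded by hyperplanes, and compatibility with the traces does not by itself make the covering conical (hence finite) outside a ball. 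Both difficulties vanish if you intervene \emph{before} running the construction of \cite{KS18}, which is the route the paper takes.
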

The proof is the same as in \cite{KS18} after noticing that one can choose the simplicial complex $\bS=(S,\Delta)$ (with the notations of loc.\ cit.) to be finite, thanks to the fact that $F\in\Derb_\Rc(\cor_{\BBV_\infty})$. 

Recall the stability theorem of~\cite{KS18}*{Th.~3.7}, a derived version of a classical theorem (see~\cite{CEH07}).

For a set $X$ and a map $f\cl X\to\BBV$, one sets
\eqn
&&\vvert f\vvert=\sup_{x\in X}\vvert f(x)\vvert.
\eneqn
 \begin{theorem}[{The stability theorem}]\label{th:stab}
 Let $X$ be a locally compact space and  let
$f_1,f_2\cl X\to\BBV$ be two continuous maps.
Then,  for any $F\in\Derb(\cor_X)$, we have
$$\dist(\roim{f_1}F,\roim{f_2}F)\leq \vvert f_1-f_2\vvert\qtq \dist(\reim{f_1}F,\reim{f_2}F)\leq\vvert f_1-f_2\vvert.$$
\end{theorem}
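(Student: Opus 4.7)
Set $a=\|f_1-f_2\|$, $B_a=\{v\in\BBV;\|v\|\le a\}$, and $Z_a=\{(v_1,v_2)\in\BBV\times\BBV;v_2-v_1\in B_a\}$, a closed subset.  The plan is to realize the pairs $(\reim{f_1}F,\reim{f_2}F)$ and $(\roim{f_1}F,\roim{f_2}F)$ as the two projections of a single sheaf on $\BBV\times\BBV$ whose support is constrained to lie in $Z_a$ by the bound $\|f_1-f_2\|\le a$, and then to exploit a kernel description of convolution by $K_a$.

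Introduce $\phi=(f_1,f_2)\cl X\to\BBV\times\BBV$ and let $q_1,q_2\cl\BBV\times\BBV\to\BBV$ be the two projections.  The identities $q_i\circ\phi=f_i$ yield $\reim{q_i}\reim{\phi}F\simeq\reim{f_i}F$ and $\roim{q_i}\roim{\phi}F\simeq\roim{f_i}F$, while the definition of $a$ gives $\phi(X)\subset Z_a$, so both $\reim{\phi}F$ and $\roim{\phi}F$ are supported in $Z_a$.  Each fiber of $q_i|_{Z_a}$ is a translate of $B_a$, hence compact, so $q_i$ is proper on $Z_a$ and $\reim{q_i}H\simeq\roim{q_i}H$ for any $H$ supported in $Z_a$.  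A direct stalk computation gives the kernel formula
\[
K_a\star G\simeq\reim{q_2}(\cor_{Z_a}\otimes\opb{q_1}G)\qtq[for all] G\in\Derb(\cor_\BBV).
\]
Taking $H=\reim{\phi}F$ (respectively $H=\roim{\phi}F$), the support condition gives $\cor_{Z_a}\otimes H\isoto H$, and composing with the counit $\opb{q_1}\reim{q_1}H\simeq\opb{q_1}\roim{q_1}H\to H$ of $(\opb{q_1},\roim{q_1})$ produces
\[
\alpha\cl K_a\star\reim{f_1}F\simeq\reim{q_2}(\cor_{Z_a}\otimes\opb{q_1}\reim{q_1}H)\to\reim{q_2}(\cor_{Z_a}\otimes H)\simeq\reim{f_2}F
\]
together with its $\roim{}$-analogue; swapping the roles of $1,2$ yields the reverse morphism $\beta$.

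The main obstacle is the $\chi_{0,2a}$-compatibility from Definition~\ref{def:convdist}: one must show that the composition
\[
K_{2a}\star\reim{f_1}F\simeq K_a\star K_a\star\reim{f_1}F\To[K_a\star\alpha]K_a\star\reim{f_2}F\To[\beta]\reim{f_1}F
\]
coincides with $\chi_{0,2a}\star\reim{f_1}F$, and symmetrically.  Unwinding the iterated construction in the kernel language identifies this composite with the morphism induced by the canonical map $\cor_{Z_a}\circ\cor_{Z_a}\to\cor_{Z_{2a}}$ of kernels on $\BBV\times\BBV$ coming from the triangle inequality $B_a+B_a\subset B_{2a}$, which under the identification $K_a\star K_a\simeq K_{2a}$ is precisely $\chi_{0,2a}$.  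Once this diagram chase is carried out the $\roim{}$-case follows in the same way, using only that $\reim{q_i}\to\roim{q_i}$ is an isomorphism on sheaves supported in $Z_a$.
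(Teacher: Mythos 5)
The paper itself does not prove this theorem---it is recalled from \cite{KS18}*{Th.~3.7}---so there is no in-house proof to compare against; I therefore assess your argument on its own merits.

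Your setup is sound and in the spirit of \cite{KS18}: introducing the graph map $\phi=(f_1,f_2)$, observing $\phi(X)\subset Z_a$, the kernel description $K_a\star G\simeq\reim{q_2}(\cor_{Z_a}\tens\opb{q_1}G)$, and using the counit of $(\opb{q_1},\roim{q_1})$ together with the fact that $q_i$ is proper on $Z_a$ to turn $\reim{q_i}$ into $\roim{q_i}$. The construction of $\alpha$ and $\beta$ is correct modulo a small asymmetry caveat: with $a=\sup_x\vvert f_1(x)-f_2(x)\vvert$, the construction of $\alpha$ requires $f_2(x)-f_1(x)\in B_a$ while that of $\beta$ requires $f_1(x)-f_2(x)\in B_a$; only the latter is automatic unless the norm is symmetric, and the paper explicitly allows asymmetric norms.

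The genuine gap is the $\chi_{0,2a}$-compatibility, which you rightly flag as ``the main obstacle'' but do not carry out, and whose stated resolution is incorrect. You assert that $\beta\circ(K_a\star\alpha)$ ``is induced by the canonical map $\cor_{Z_a}\circ\cor_{Z_a}\to\cor_{Z_{2a}}$ \dots which under the identification $K_a\star K_a\simeq K_{2a}$ is precisely $\chi_{0,2a}$.'' This cannot be right: the kernel map $\cor_{Z_a}\circ\cor_{Z_a}\to\cor_{Z_{2a}}$ \emph{is} the isomorphism $K_a\star K_a\isoto K_{2a}$ (the fibres of the kernel composition over $Z_{2a}$ are nonempty compact convex), whereas $\chi_{0,2a}\cl K_{2a}\to K_0$ is the non-invertible restriction, corresponding as a kernel morphism to $\cor_{Z_{2a}}\to\cor_{\Delta_\BBV}$. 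Moreover $\beta\circ(K_a\star\alpha)$ is not a pure kernel operation on $\BBV\times\BBV$: it involves two applications of the counit of $(\opb{q_i},\roim{q_i})$ to the sheaf $H=\reim{\phi}F$, and showing that these two counits combine with the kernel isomorphisms to produce exactly the restriction $\cor_{Z_{2a}}\to\cor_\Delta$ acting on $\reim{f_1}F$ is precisely the diagram chase that constitutes the substance of the theorem. As written, the key step is both missing and mischaracterized, so the argument is a plausible strategy rather than a proof.
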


\subsection{Generators for \PL sheaves}
Consider a  triangulated category $\shd$ and a family of objects $\shg$. Consider the full subcategory $\sht$ of $\shd$ defined as follows. An object $F\in\shd$ belongs to $\sht$ if there exists  a finite sequence $F_0,\dots,F_N$ in $\shd$  with $F_0=0$, $F_N=F$ and distinguished triangles (d.\ t.\ for short) $F_{k}\to F_{k+1}\to G_k[m_k]\to[+1]$, $0\leq k<N$  with  $m_k\in\Z$ and $G_k\in\shg$. 

The next result is well-known from the specialists but we recall its proof for the reader's convenience. 
\begin{lemma}
The subcategory $\sht$ of $\shd$\ is triangulated. It is the smallest triangulated subcategory of $\shd$ which contains $\shg$.
\end{lemma}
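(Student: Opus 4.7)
The plan is to verify that $\sht$ is closed under shifts and under taking cones (so that it inherits a triangulated structure), and then to show that any triangulated subcategory of $\shd$ containing $\shg$ must contain $\sht$. The zero object and closure under isomorphism are immediate: $0\in\sht$ by taking $N=0$, and if $F\in\sht$ with filtration $0=F_0\to\cdots\to F_N=F$ and $F\isoto F'$, then replacing $F_N$ by $F'$ in the last distinguished triangle $F_{N-1}\to F_N\to G_{N-1}[m_{N-1}]\to[+1]$ exhibits $F'\in\sht$.

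For closure under shifts, given $F\in\sht$ with the data above, shifting each distinguished triangle by $[n]$ yields $F_k[n]\to F_{k+1}[n]\to G_k[m_k+n]\to[+1]$, hence $F[n]\in\sht$ for every $n\in\Z$.

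The heart of the argument is closure under extensions: if $F'\to F''\to F\to[+1]$ is a distinguished triangle with $F,F'\in\sht$, then $F''\in\sht$. I would prove this by induction on the filtration length $N$ of $F$. The base case $N=0$ gives $F=0$ and $F''\simeq F'\in\sht$. For the inductive step, apply the octahedral axiom to the composition $F''\to F\to G_{N-1}[m_{N-1}]$: rotating the given triangle yields that the cone of $F''\to F$ is $F'[1]$, while the cone of $F\to G_{N-1}[m_{N-1}]$ is $F_{N-1}[1]$. The octahedron then produces an object $X$ together with distinguished triangles $F'\to X\to F_{N-1}\to[+1]$ and $X\to F''\to G_{N-1}[m_{N-1}]\to[+1]$. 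Since $F_{N-1}$ has filtration length $N-1$, the inductive hypothesis gives $X\in\sht$, and then appending the second triangle to a filtration of $X$ exhibits $F''\in\sht$.

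Closure under cones now follows painlessly: for any $u\cl F\to F'$ with $F,F'\in\sht$, the cone $C$ fits in the rotated triangle $F'\to C\to F[1]\to[+1]$, making $C$ an extension of $F[1]$ by $F'$, both of which lie in $\sht$ by the first two steps. Hence $\sht$ is triangulated. For minimality, $\shg\subset\sht$ is trivial (take $N=1$, $m_0=0$ with the triangle $0\to G\to G\to[+1]$); conversely, if $\shs\subset\shd$ is any triangulated subcategory containing $\shg$, then an easy induction on $k$ using the triangles $F_k\to F_{k+1}\to G_k[m_k]\to[+1]$ shows each $F_k\in\shs$, hence $\sht\subset\shs$. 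The only nontrivial point is the octahedral manipulation in the extension step, but this is a standard construction.
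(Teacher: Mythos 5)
Your proof is correct and follows essentially the same strategy as the paper's: the key step in both is closure under extensions, established by induction on the filtration length of the third vertex via the octahedral axiom (the paper phrases this with auxiliary subcategories $\sht_n$ after first reducing to $\shg$ closed under shifts and isomorphisms, while you track the shift $[m_k]$ directly and verify the small closure properties explicitly). The arguments are the same in substance.
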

\begin{proof}
We may assume from the beginning that $\shg$ is an additive category stable by isomorphisms and by shifts. Define the additive subcategory $\sht_n$ by induction as follows:
\eqn
&&\parbox{70ex}{$\sht_0=\{0\}$, $X\in\sht_n$ if and only if there is a d.\ t.\ $X_{n-1}\to X\to X_1\to[+1]$ with 
$X_{n-1}\in\sht_{n-1}$ and $X_1\in\shg$.
}\eneqn
Clearly, $\sht_1=\shg$. 

Consider a d.\ t.\ $X_n\to X\to[\alpha]X_p\to[+1]$ with $X_i\in\sht_i$ as above. By the definition of $\sht_p$, there exists a d.\ t.\ 
$X_{p-1}\to X_p\to[\beta]X_1\to[+1]$ with $X_i\in\sht_i$, $i=p-1,1$. Consider a d.\ t.\ $Y\to X\to[\beta\circ\alpha]X_1\to[+1]$. By the octahedral axiom we get a d.\ t.\ $X_n\to Y\to X_{p-1}\to[+1]$. By the induction hypothesis, $Y\in\sht_{n+p-1}$. Hence $X\in\sht_{n+p}$. 
\end{proof}

In this paper, we shall say that  $\shg$ generates $\shd$ if $\sht=\shd$. 

\begin{theorem}\label{th:genertri}
The triangulated category  $\Derb_{\rcl}(\cor_{\BBV})$ is generated by the family $\{\cor_P\}$ where $P$ ranges over the family of locally closed convex polyhedra.
\end{theorem}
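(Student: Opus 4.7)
Let $\sht$ denote the triangulated subcategory of $\Derb_{\rcl}(\cor_{\BBV})$ generated by the sheaves $\cor_P$ with $P$ a locally closed convex polyhedron; the goal is to show $\sht = \Derb_{\rcl}(\cor_{\BBV})$. Using the equivalence $\Derb(\mdrcl[\cor_{\BBV}]) \isoto \Derb_{\rcl}(\cor_{\BBV})$ established earlier together with the standard truncation distinguished triangles, every object of $\Derb_{\rcl}(\cor_{\BBV})$ is built from its finitely many nonzero cohomology sheaves (which lie in $\mdrcl[\cor_{\BBV}]$) by iterated distinguished triangles. Therefore it will suffice to show that every $F \in \mdrcl[\cor_{\BBV}]$ belongs to $\sht$.

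Given such an $F$, I start from a finite covering of $\BBV$ by convex polyhedra on which $F$ is constant of finite rank (Definition~\ref{def:Flinear}) and refine it to a \PL-stratification $\{Z_a\}_{a \in A}$ of $\BBV$ with $F|_{Z_a}$ constant of finite rank for every $a$, using the stratification proposition of Section~\ref{section:PLgeometry}. The relation $Z_a \preceq Z_b \iff Z_a \subset \ol Z_b$ is a partial order on $A$ by the axioms of Definition~\ref{def:PLstrat}, which I extend to a total order identifying $A$ with $\{1,\dots,n\}$ so that $Z_a \preceq Z_b$ implies $a \leq b$. Set $S_k := Z_1 \cup \cdots \cup Z_k$. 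A short incidence argument shows each $S_k$ is closed in $\BBV$: if $x \in \ol{S_k}$ lies in some stratum $Z_b$, then $x \in \ol{Z_a}$ for some $a \leq k$, so $Z_b \cap \ol{Z_a} \neq \varnothing$, which forces $Z_b \subset \ol{Z_a}$ and gives $b \leq a \leq k$.

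Now set $F_k := F \otimes \cor_{S_k}$, so $F_0 = 0$ and $F_n = F$. Since $S_{k-1}$ is closed in $S_k$ with open complement $Z_k$, tensoring $F$ over the field $\cor$ with the short exact sequence $0 \to \cor_{Z_k} \to \cor_{S_k} \to \cor_{S_{k-1}} \to 0$ yields
\[ 0 \to F_{Z_k} \to F_k \to F_{k-1} \to 0 \]
in $\mdrcl[\cor_{\BBV}]$. Since $F|_{Z_k}$ is constant of rank $n_k$, the zero-extension $F_{Z_k}$ is isomorphic to $\cor_{Z_k}^{n_k}$, a finite direct sum of generators and thus an object of $\sht$. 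Induction on $k$ then produces $F_k \in \sht$ at every step, and in particular $F = F_n \in \sht$. The only genuine technical point is the topological-sort step ensuring that each $S_k$ is closed; everything else is a standard dévissage by a filtration whose associated graded pieces are direct sums of constant sheaves on locally closed convex polyhedra.
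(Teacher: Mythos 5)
Your proof is correct, and the d\'evissage is genuinely different from the paper's. Both start with the truncation reduction to a single sheaf $F\in\mdrcl[\cor_{\BBV}]$. The paper then peels off the open dense part: writing $U$ for the complement of a finite union of affine hyperplanes on which $F$ is locally constant, $F_U$ is a finite direct sum of constant sheaves on the open convex components of $U$, and the lower-dimensional remainder $F_Z$ is disposed of by a double induction, on $\dim\BBV$ and on the number of hyperplanes. You instead pass to a \PL-stratification $\{Z_a\}_{a\in A}$ refining the covering that witnesses that $F$ is \PL (so each $F\vert_{Z_a}$ is constant of finite rank), linearly order $A$ compatibly with the frontier relation $Z_a\subset\ol{Z_b}$ of Definition~\ref{def:PLstrat} (acyclic, since by disjointness and local closedness of the strata the relation strictly decreases stratum dimension), and filter $F$ by the closed sets $S_k=Z_1\cup\cdots\cup Z_k$; the short exact sequences $0\to F_{Z_k}\to F_{S_k}\to F_{S_{k-1}}\to 0$ with $F_{Z_k}\cong\cor_{Z_k}^{n_k}$ give the result by a single induction on $k$. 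Your route avoids the dimension induction entirely and in fact produces an explicit finite filtration of $F$ inside $\mdrcl[\cor_{\BBV}]$ whose graded pieces are barcode-type sheaves, which is a slightly sharper conclusion than bare triangulated generation; the paper's route is lighter on machinery, using only hyperplane cuts rather than the refining \PL-stratification. Both are complete.
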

\begin{proof}
(i) We denote by $\shg$ the family of sheaves isomorphic to some  $\cor_P$, $P$  a locally closed convex polyhedron, and denote by $\sht$ the triangulated subcategory of $\Derb_{\rcl}(\cor_{\BBV})$ generated by $\shg$. 

\spa
(ii) We argue by induction on $\dim \BBV$. The case where $\dim\BBV$ is $0$  is clear.

\spa
(iii) Let $F\in\Derb_{\rcl}(\cor_{\BBV})$. By truncation, we may reduce to the case where $F$ is concentrated in degree $0$. 

\spa
(iv) There exists a finite family $\{H_a\}_{a\in A}$ 
 of affine hyperplanes 
such that, setting
 $U= \BBV\setminus\bigcup_aH_a$,  the restriction of $F$ to $U$ is locally constant. Let $U=\bigsqcup_iU_i$ be the decomposition of $U$ into connected component. Each $U_i$ is an open convex polyhedron. Set  $Z=\bigcup_a H_a$ and consider the exact sequence
$0\to F_U\to F\to F_Z\to 0$. The sheaf $F_U$ is a finite direct sum of sheaves of the type $\cor_{U_i}$. Hence $F_U\in\sht$  and it remains to show that $F_Z$ belongs to $\sht$. 

\spa
(v) We argue by induction on $\# A$. If $\# A=1$, then the result follows from the induction hypothesis on the dimension of $\BBV$ since we may identify $F_{H_a}$ with a sheaf on the affine space $H_a$. 
Let $a\in A$ and define $G$ by the exact sequence $0\to G\to F_Z\to F_{H_a}\to 0$. 
By the induction hypothesis $G$ and $F_{H_a}$ belong to $\sht$ and the result follows.
\end{proof}

\section{ \PL $\gamma$-sheaves}\label{section:PLgamma}

\subsection{Review on $\gamma$-sheaves}\label{subsection:gammashv}

In this subsection we shall review some definitions and results extracted from~\cite{KS90, KS18}. 
The so-called $\gamma$-topology has been studied with some details in~\cite{KS90}*{\S\,3.4}.

Let $\BBV$ be a  finite-dimensional  real vector space.
Recall that we denote by $a\cl \BBV\to\BBV$ the antipodal map $x\mapsto -x$ and by $s\cl\BBV\times\BBV\to\BBV$ the map $(x,y)\mapsto x+y$. 
Hence,  for two subsets $A,B$ of $\BBV$, one has $A+B=s(A\times B)$. 

A subset $A$ of $\BBV$  is called a cone if $0\in A$ and $\R_{>0}A\subset A$. A convex cone  $A$ is proper
if $A\cap A^a=\{0\}$.

Throughout  the rest of the paper, we consider a cone $\gamma\subset\BBV$ 
 and we assume:
 \eq\label{hyp0}
&&\parbox{75ex}{\em{
$\gamma$ is  closed proper convex with non-empty interior. }
}\eneq
In \S~\ref{subsection:PLgammashv} we shall make the extra assumption that $\gamma$ is {\em polyhedral}, meaning that it is a finite intersection of closed  half-spaces.

We say that a subset $A$ of $\BBV$ is {\em $\gamma$-invariant} if $A+\gamma=A$.
Note that a subset $A$ is $\gamma$-invariant if and only if
$\BBV\setminus A$ is $\gamma^a$-invariant.

The family of $\gamma$-invariant open subsets of $\BBV$ defines a topology, which is called  the \define{$\gamma$-topology}
 on $\BBV$.
One denotes by $\BBV_\gamma$ the space $\BBV$ endowed with the $\gamma$-topology and one denotes by
\begin{equation}
\phig \cl\BBV \to \BBV_\gamma
\end{equation}
the continuous map associated with the identity.
Note that the closed sets for this topology are the 
$\gamma^a$-invariant closed subsets of $\BBV$. 
 
\begin{definition}\label{def:loclo}
Let $A$ be a subset of $\BBV$. 
\banum
\item
One says that $A$ is {\em$\gamma$-open} \lp resp.\ {\em $\gamma$-closed}\rp\, if $A$ is open \lp resp.\  closed\rp\,  for the $\gamma$-topology. 
\item
One says that $A$ is {\em $\gamma$-locally closed} if $A$ is the intersection of  a $\gamma$-open subset and a $\gamma$-closed  subset.

\item
One says that $A$ is {\em $\gamma$-flat} if $A=(A+\gamma)\cap(A+\gamma^a)$.
\item 
One says that a closed set $A$ is {\em $\gamma$-proper} if the addition  map $s$ is proper on $A\times\gamma^a$.
\eanum
\end{definition}
Remark that a closed subset $A$ is $\gamma$-proper if and only if
$A\cap(x+\gamma)$ is compact for any $x\in\BBV$.

\begin{proposition}[\cite{KS18}*{Prop.~4.4}]
The set of $\gamma$-flat open subsets $\Omega$ of $\BBV$ and
the set of $\gamma$-locally closed subsets $Z$ of $\BBV$ 
correspond bijectively by
$$\xymatrix@R=0ex@C=5ex{
\mbox{$\rule{4ex}{0ex}\Omega$}\ar@{|->}[r]&(\Omega+\gamma)\cap 
\ol{\;\Omega+\gamma^a\;}\\
\Int(Z)&\mbox{$Z.\rule{14ex}{0ex}$}\ar@{|->}[l]}
$$ 
In particular, $\gamma$-locally closed subsets are  $\gamma$-flat.
\end{proposition}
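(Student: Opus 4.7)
The plan is to exhibit the two maps in detail, verify that both compositions are the identity, and independently prove the ``in particular'' clause. Write $F(\Omega) = (\Omega + \gamma) \cap \overline{\Omega + \gamma^a}$ and $G(Z) = \Int(Z)$. First, $F(\Omega)$ is $\gamma$-locally closed: the set $\Omega + \gamma$ is open (a union of open translates) and $\gamma$-invariant (since $\gamma + \gamma = \gamma$), hence $\gamma$-open, while $\overline{\Omega + \gamma^a}$ is closed and $\gamma^a$-invariant by continuity of addition, hence $\gamma$-closed. For the concluding flatness statement, I would argue directly: if $Z = U \cap C$ with $U$ $\gamma$-open and $C$ $\gamma$-closed, then $Z + \gamma \subset U + \gamma = U$ and $Z + \gamma^a \subset C + \gamma^a = C$, so $(Z + \gamma) \cap (Z + \gamma^a) \subset U \cap C = Z$, the reverse containment being trivial.

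The main technical tool is the lemma: \emph{for $W \subset \BBV$ open and $\gamma^a$-invariant, $\Int(\overline{W}) = W$}. I would first show that $x \in \overline{W}$ and $v \in \Int(\gamma^a)$ imply $x + v \in W$: choose $\epsilon > 0$ with $B(v, \epsilon) \subset \gamma^a$ and a sequence $u_n \in W$ with $u_n \to x$; for large $n$, $v - (u_n - x) \in B(v, \epsilon) \subset \gamma^a$, hence $x + v = u_n + (v - (u_n - x)) \in W + \gamma^a = W$. The input $\Int(\gamma^a) \neq \emptyset$ is precisely where the hypothesis that $\gamma$ has nonempty interior enters. The lemma then follows: for $x \in \Int(\overline{W})$, pick $v \in \Int(\gamma^a)$ with $\|v\|$ smaller than the radius of a ball around $x$ inside $\overline{W}$, so $x - v \in \overline{W}$, and apply the previous observation to $(x - v, v)$ to get $x = (x - v) + v \in W$.

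For $G \circ F = \mathrm{id}$: since $\Omega + \gamma$ is already open, $G(F(\Omega)) = (\Omega + \gamma) \cap \Int(\overline{\Omega + \gamma^a})$, and the lemma applied to $\Omega + \gamma^a$ reduces this to $(\Omega + \gamma) \cap (\Omega + \gamma^a) = \Omega$ by $\gamma$-flatness. For $F \circ G = \mathrm{id}$, write $Z = U \cap C$ so that $\Int(Z) = U \cap \Int(C)$. The key auxiliary fact is that for any $z \in Z$, $v_0 \in \Int(\gamma)$, and sufficiently small $\epsilon > 0$, the perturbation $z - \epsilon v_0$ lies in $\Int(Z)$: openness of $U$ keeps $z - \epsilon v_0 \in U$, and the inclusion $C + \Int(\gamma^a) \subset \Int(C)$ (a small ball around $-\epsilon v_0$ sits in $\gamma^a$, translating $z$ into a ball in $C$) places $z - \epsilon v_0$ in $\Int(C)$. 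From this one deduces $\Int(Z) + \gamma = Z + \gamma$ via the decomposition $z + v = (z - \epsilon v_0) + (v + \epsilon v_0)$, and $\overline{\Int(Z) + \gamma^a} = \overline{Z + \gamma^a}$ via the approximation $(z - \epsilon v_0) + w \to z + w$. Then $F(G(Z)) = (Z + \gamma) \cap \overline{Z + \gamma^a} \subset U \cap C = Z$ (using $\overline{Z + \gamma^a} \subset \overline{C} = C$), with the reverse inclusion trivial. The main obstacle is this $\epsilon$-perturbation step, which leverages the nonempty interior of $\gamma$ in a delicate way: one must push $z$ slightly into $\Int(C)$ without leaving the open set $U$, and the direction $-\epsilon v_0 \in \Int(\gamma^a)$ is chosen precisely so that both effects happen simultaneously.
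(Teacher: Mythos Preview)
The paper does not supply its own proof of this proposition: it is simply quoted from \cite{KS18}*{Prop.~4.4}, with no argument given here. So there is nothing to compare against in this paper; your task was really to reproduce (or independently find) the proof from the cited reference.

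That said, your argument is correct and complete. The key ingredients---that $\gamma$-open sets are in particular open in the usual topology, that $\Int(A\cap B)=A\cap\Int(B)$ when $A$ is open, the lemma $\Int(\ol{W})=W$ for $W$ open and $\gamma^a$-invariant, and the $\eps$-perturbation $z\mapsto z-\eps v_0$ pushing a point of $Z$ into $\Int(Z)$---are all handled cleanly, and each place where the hypothesis $\Int(\gamma)\neq\emptyset$ is used is identified. The direct verification of $\gamma$-flatness for $\gamma$-locally closed sets via $(Z+\gamma)\cap(Z+\gamma^a)\subset U\cap C=Z$ is the standard one. One very minor remark: in the step $\ol{\Int(Z)+\gamma^a}\supset\ol{Z+\gamma^a}$ you might note explicitly that the smallness of $\eps$ needed for $z-\eps v_0\in\Int(Z)$ depends on $z$, but since you only need a sequence converging to $z+w$ this causes no trouble.
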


We shall use the notations:
\eq\label{not:0}
&&\left\{\begin{array}{l}
\Derb_{\gammac}(\cor_{\BBV})\eqdot\{F\in\Derb(\cor_{\BBV}) ;\musupp(F)\subset \BBV\times\gammac\},\\[1ex]
\Derb_{\rcg}(\cor_{\BBV})\eqdot\Derb_{\Rc}(\cor_{\BBV})\cap\Derb_{\gammac}(\cor_{\BBV}),\\[1ex]
\mdg[\cor_{\BBV}]\eqdot\md[\cor_{\BBV}]\cap \Derb_{\gammac}(\cor_\BBV),\\[1ex]
\mdrcg[\cor_{\BBV}]\eqdot\mdrc[\cor_\BBV]\cap\mdg[\cor_{\BBV}].
\end{array}\right.
\eneq
We call an object of $\Derb_{\gammac}(\cor_{\BBV})$ \define{a $\gamma$-sheaf}. 

It follows from~\cite{KS90}*{Prop.~5.4.14} that for $F,G\in\Derb_{\gammac}(\cor_{\BBV})$ and $H\in\Derb_{\gamma^\circ}(\cor_{\BBV})$, the sheaves $F\tens G$ and $\rhom(H,F)$ belong to $\Derb_{\gammac}(\cor_{\BBV})$.

The next result is implicitly proved in~\cite{KS90} and explicitly in~\cite{KS18}. (In this statement, the hypothesis that  
$\Int(\gamma)$ is non empty is not necessary.)

\begin{theorem}\label{th:eqvderbg}
Let $\gamma$ be a closed convex proper cone in $\BBV$.
The  functor $\roim{\phig}\cl \Derb_{\gammac}(\cor_\BBV)\to\Derb(\cor_{\BBV_\gamma})$ is an equivalence of triangulated categories with quasi-inverse $\opb{\phig}$. Moreover, this equivalence preserves the natural $t$-structures of both categories. In particular, for $F\in \Derb(\cor_\BBV)$, the condition
$F\in \Derb_{\gammac}(\cor_\BBV)$ is equivalent to the condition:
$\musupp(H^j(F))\subset \BBV\times\gamma^{\circ a}$  for any $j\in\Z$.
\end{theorem}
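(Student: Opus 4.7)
The plan is to verify that the adjoint pair $(\opb{\phig}, \roim{\phig})$ induced by the continuous map $\phig\cl\BBV\to\BBV_\gamma$ restricts to an equivalence between $\Derb_{\gammac}(\cor_\BBV)$ and $\Derb(\cor_{\BBV_\gamma})$. First I would verify that $\opb{\phig}G$ lies in $\Derb_{\gammac}(\cor_\BBV)$ for every $G \in \Derb(\cor_{\BBV_\gamma})$: any $\gamma$-open neighbourhood $U$ of a point $x$ automatically contains $x+\gamma$, so the sections of $\opb{\phig}G$ enjoy an invariance property along the $\gamma$-direction, and by the propagation characterization of microsupport (see \cite{KS90}*{Prop.~5.1.1}) this translates into $\musupp(\opb{\phig}G) \subset \BBV \times \gamma^{\circ a}$.

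Next I would check that the unit $G \to \roim{\phig}\opb{\phig}G$ is an isomorphism: for any $\gamma$-open $U$, both sides compute $R\Gamma(U;G)$, since $\phig$ is the identity on the underlying sets. The crux of the argument is then the counit isomorphism $\opb{\phig}\roim{\phig}F \isoto F$ for $F\in\Derb_{\gammac}(\cor_\BBV)$. To verify this at stalks, fix $x\in\BBV$ and use the cofinal family of $\gamma$-open neighbourhoods $W_\eps \seteq B(x,\eps) + \gamma$, $\eps>0$, so that $(\opb{\phig}\roim{\phig}F)_x \simeq \indlim[\eps\to0^+] R\Gamma(W_\eps;F)$. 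The microlocal cut-off lemma of \cite{KS90}*{Prop.~5.2.3} (equivalently, the non-characteristic deformation lemma \cite{KS90}*{Prop.~2.7.2}), applied to the contraction of $W_\eps$ onto $B(x,\eps)$ along $\gamma$-directions together with the hypothesis $\musupp(F)\subset\BBV\times\gamma^{\circ a}$, yields the restriction isomorphism $R\Gamma(W_\eps;F)\isoto R\Gamma(B(x,\eps);F)$, whose inductive limit as $\eps\to 0^+$ is precisely $F_x$. This step, where the propagation bound on the microsupport is converted into a concrete cohomology-restriction isomorphism, is the main obstacle.

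For the $t$-structure statement, I would observe that the same propagation argument, carried out in cohomological degree zero on a suitable basis of $\gamma$-opens, forces $R^j\phig_*F = 0$ for $j>0$ and any $F\in\mdg[\cor_\BBV]$; hence $\roim{\phig}$ is $t$-exact on $\Derb_{\gammac}(\cor_\BBV)$ and the equivalence identifies $\mdg[\cor_\BBV]$ with $\md[\cor_{\BBV_\gamma}]$. The ``in particular'' claim then falls out: one implication is immediate from the sub-additivity of $\musupp$ applied to the standard truncation triangles, and the converse is a direct consequence of the $t$-exactness just established, which forces each $H^j(F)$ to lie in $\mdg[\cor_\BBV]$.
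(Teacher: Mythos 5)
The paper does not actually prove Theorem~\ref{th:eqvderbg}; it explicitly defers to \cite{KS90} (implicit) and \cite{KS18} (explicit). So I can only evaluate your sketch on its own terms. Your overall strategy is the standard and correct one: the hypothesis $\musupp(F)\subset\BBV\times\gammac$ is exactly what makes the counit $\opb{\phig}\roim{\phig}F\to F$ an isomorphism, the stalk of $\roim{\phig}F$ at $x$ is indeed computed over the cofinal family $W_\eps=B(x,\eps)+\gamma$, and the $t$-exactness and the ``in particular'' statement fall out as you describe.

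There are two issues, one minor and one more substantial. The minor one: invoking the non-characteristic deformation lemma \cite{KS90}*{Prop.~2.7.2} requires $\ol{W_\eps\setminus B(x,\eps)}\cap\supp(F)$ to be compact, which fails for a general $\gamma$-sheaf (already for $F=\cor_\BBV$). The clean way around this is the convolution description $\opb{\phig}\roim{\phig}F\simeq\cor_{\gamma^a}\star F$ of \cite{KS90}*{Prop.~3.5.4} combined with the microlocal cut-off lemma, which has no such compactness hypothesis; alternatively, reduce to compactly supported $F$. The substantial gap is the unit isomorphism. The sentence ``both sides compute $R\Gamma(U;G)$, since $\phig$ is the identity on the underlying sets'' is not a proof: $\opb{\phig}G$ is a sheafification in the finer topology, and its derived sections over a $\gamma$-open $U$ do not tautologically agree with $R\Gamma(U;G)$ computed in $\BBV_\gamma$. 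Fortunately, this step follows formally from your other two. Once you know that $\opb{\phig}G\in\Derb_{\gammac}(\cor_\BBV)$ (your step~1) and that the counit is an isomorphism on $\Derb_{\gammac}(\cor_\BBV)$ (your step~3), the triangle identity for the adjunction shows that $\opb{\phig}(\eta_G)$ is an isomorphism, where $\eta_G\cl G\to\roim{\phig}\opb{\phig}G$ is the unit; and $\opb{\phig}$ is conservative (it is exact and stalk-preserving, and on any topological space stalks detect zero), hence $\eta_G$ is an isomorphism. You should make this deduction explicit rather than assert the unit isomorphism directly.
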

Thanks to this theorem, the reader may ignore microlocal sheaf theory, at least in a first reading.

\begin{corollary}[\cite{KS18}*{Cor.~2.8}]\label{cor:ssA}
Let $A$ be a $\gamma$-locally closed subset of \/ $\BBV$.
Then $\musupp(\cor_A)\subset \BBV\times\gammac$. 
\end{corollary}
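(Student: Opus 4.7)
The plan is to prove the equivalent statement $\cor_A \in \Derb_{\gammac}(\cor_\BBV)$; by Theorem~\ref{th:eqvderbg} this is the same assertion as the microsupport bound $\musupp(\cor_A) \subset \BBV \times \gammac$, and the theorem furthermore tells us that an object of $\Derb(\cor_\BBV)$ belongs to $\Derb_{\gammac}(\cor_\BBV)$ as soon as it is isomorphic to $\opb{\phig} G$ for some $G \in \Derb(\cor_{\BBV_\gamma})$. So the whole task reduces to exhibiting $\cor_A$ as such a pullback.

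Since $A$ is $\gamma$-locally closed, I write $A = U \cap Z$ with $U \subset \BBV$ a $\gamma$-open subset and $Z \subset \BBV$ a $\gamma$-closed subset. In the topological space $\BBV_\gamma$ the set $U$ is open and the set $Z$ is closed, so $A$ is locally closed in $\BBV_\gamma$, and the extension by zero of the constant sheaf on $A$ provides a well-defined sheaf $G \in \md[\cor_{\BBV_\gamma}]$.

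It remains to identify $\opb{\phig} G$ with $\cor_A$. The map $\phig$ is the identity on underlying sets and is continuous, so $\opb{\phig}$ preserves stalks; moreover it is an exact tensor functor that commutes with $j_!$ for open embeddings and with $i_*$ for closed embeddings (by base change for the tautological cartesian squares where $\opb{\phig}$ of $U$, respectively $Z$, inside $\BBV_\gamma$ is just $U$, respectively $Z$, inside $\BBV$). Using $\cor_A \simeq \cor_U \tens \cor_Z$ and these compatibilities, one obtains $\opb{\phig} G \simeq \cor_A$, and Theorem~\ref{th:eqvderbg} then yields the claim. The only genuine step is this last identification, which is a bookkeeping check reflecting the fact that $\phig$ is a bijection on points and that inverse image interacts well with open and closed immersions.
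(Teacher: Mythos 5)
Your proof is correct. Since $A=U\cap Z$ is locally closed in $\BBV_\gamma$, the sheaf $\cor_A$ exists on $\BBV_\gamma$, and Theorem~\ref{th:eqvderbg} says precisely that $\opb{\phig}$ carries $\Derb(\cor_{\BBV_\gamma})$ into $\Derb_{\gammac}(\cor_\BBV)$ (this is what being a quasi-inverse to $\roim{\phig}\vert_{\Derb_{\gammac}}$ means), so once $\opb{\phig}\cor_A$ is identified with $\cor_A$ the microsupport bound follows by the very definition of $\Derb_{\gammac}(\cor_\BBV)$. Your identification by splitting into an open and a closed piece and using base change is fine but longer than necessary: for any continuous map $f$ and any locally closed $B$ in the target one has $\opb{f}\cor_B\simeq\cor_{\opb{f}(B)}$ (\cite{KS90}*{Prop.~2.3.6}), and since $\phig$ is a bijection on underlying points this gives $\opb{\phig}\cor_A\simeq\cor_A$ at once. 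The paper delegates the proof to~\cite{KS18}, and the sentence just before the corollary (recalling via \cite{KS90}*{Prop.~5.4.14} that $\Derb_{\gammac}$ is stable under $\tens$) suggests that the intended route there is to first check that $\cor_U$ and $\cor_Z$ are $\gamma$-sheaves and then conclude by $\cor_A\simeq\cor_U\tens\cor_Z$; your argument applies the equivalence directly to the locally closed set $A$ and thereby bypasses the tensor step, a modest but genuine streamlining.
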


\begin{proposition} \label{pro:lipsch}
Assume~\eqref{hyp0}. Let $U=U+\gamma$ be a $\gamma$-open set and let $x_0\in\partial U$. Then there exist
a linear coordinate system $(x_1,\dots,x_n)$ on $\BBV$, 
an open neighborhood $V$ of $x_0$, an open subset  $W$ of $\BBV$ and a bi-Lipschitz isomorphism $\phi\cl V\isoto W$  such that 
$\phi(V\cap U)=W\cap\{x\in\BBV;x_n>0\}$.
\end{proposition}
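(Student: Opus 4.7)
The strategy is to realize $\phi$ as the graph-straightening map associated with a local Lipschitz parametrization of $\partial U$. I would first fix a vector $v\in\Int(\gamma)$ together with an $\epsilon>0$ for which $B(v,\epsilon)\subset\gamma$, and choose linear coordinates $(x_1,\dots,x_n)$ on $\BBV$ so that $x_0=0$ and $v=e_n$, writing $\BBV=H\oplus\R v$ with $H\simeq\R^{n-1}$. The central object is then the height function
\[
h(y)\seteq\inf\{t\in\R\,;\,y+tv\in U\},\qquad y\in H.
\]
Because $v\in\gamma$, $U+\gamma=U$, and $U$ is open, the set $\{t:y+tv\in U\}$ is upward-closed and open, hence of the form $(h(y),+\infty)$.

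My first key step is to establish $h(0)=0$. Openness of $U$ places $0\in\partial U\subset\BBV\setminus U$, and the identity $U+\gamma=U$ forces $\BBV\setminus U$ to be $\gamma^a$-invariant, so $\gamma^a\subset\BBV\setminus U$; in particular $-tv\notin U$ for $t\geq 0$, giving $h(0)\geq 0$. Conversely, picking $x_k\in U$ with $x_k\to 0$: for fixed $t>0$ and $k$ large one has $\|x_k\|<t\epsilon$, so $tv-x_k\in B(tv,t\epsilon)\subset t\gamma=\gamma$ and $tv\in x_k+\gamma\subset U$; this yields $h(0)\leq 0$.

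Next I would prove that $h$ is $(1/\epsilon)$-Lipschitz near $0$. If $y+tv\in U$ and $s-t=\|y'-y\|/\epsilon$, then $(y'-y)+(s-t)v\in(s-t)B(v,\epsilon)\subset\gamma$, so $y'+sv=(y+tv)+\bigl((y'-y)+(s-t)v\bigr)\in U+\gamma=U$. Letting $t\downarrow h(y)$ gives $h(y')\leq h(y)+\|y'-y\|/\epsilon$ whenever $h(y)<\infty$; by symmetry $|h(y')-h(y)|\leq\|y'-y\|/\epsilon$. Combined with $h(0)=0$, this yields $|h(y)|\leq\|y\|/\epsilon$, so $h$ is finite and Lipschitz on any ball about $0$.

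Finally I would define $\phi(y,y_n):=(y,\,y_n-h(y))$ on a small ball $V$ about $0$: this map is bi-Lipschitz onto $W:=\phi(V)$, with inverse $(y,z_n)\mapsto(y,z_n+h(y))$. From the equivalence $(y,y_n)\in U\Longleftrightarrow y_n>h(y)$ one reads off $\phi(V\cap U)=W\cap\{x_n>0\}$, as desired. The subtle point is the computation $h(0)=0$, where the hypothesis $\Int(\gamma)\neq\varnothing$ enters decisively: the upper bound $h(0)\leq 0$ uses the ball $B(v,\epsilon)\subset\gamma$ centered at an interior vector to absorb the perturbation $-x_k$, while the lower bound $h(0)\geq 0$ uses $-v\in\gamma^a$ via $\gamma^a$-invariance of $\BBV\setminus U$.
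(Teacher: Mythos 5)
Your proof is correct, and it is genuinely different in presentation from the paper's. The paper does not give a direct argument at all: it simply asserts that the proofs of Lemmas 2.36 and 2.37 of the Guillermou--Schapira reference (which treat subanalytic open sets) ``extend immediately to our situation,'' leaving the reader to adapt subanalytic Lipschitz-cell technology to the $\gamma$-open setting. You instead produce a self-contained, elementary argument tailored to the cone structure: pick $v\in\Int(\gamma)$ and $\epsilon>0$ with $B(v,\epsilon)\subset\gamma$, define the height function $h(y)=\inf\{t:y+tv\in U\}$ on a complement $H$ of $\R v$, show $h(0)=0$ by combining $\gamma^a$-invariance of $\BBV\setminus U$ (lower bound) with approximation of $x_0$ by points of $U$ and the absorption $B(tv,t\epsilon)\subset\gamma$ (upper bound), and establish the $1/\epsilon$-Lipschitz bound by the inclusion $(y'-y)+(s-t)v\in(s-t)\overline{B(v,\epsilon)}\subset\gamma$; the shear $\phi(y,y_n)=(y,y_n-h(y))$ then straightens $\partial U$. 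All the steps check out (the boundary-of-the-ball point $(y'-y)/(s-t)+v$ lies in $\overline{B(v,\epsilon)}\subset\gamma$ since $\gamma$ is closed, so the apparent off-by-one with strict versus weak inequalities is harmless), and the $\gamma$-invariance is used in exactly the right places. What your route buys is transparency and independence from the subanalytic Lipschitz machinery; what the paper's citation buys is brevity and consistency with the toolkit already developed in the cited work.
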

\begin{proof}
The proofs of~\cite{GS16}*{Lem.~2.36, 2.37} (which were formulated for subanalytic open subsets) extend immediately to our situation. 
\end{proof}

Recall the duality functor
\eqn
&&\RD'_M(\scbul)=\rhom(\scbul,\cor_M).
\eneqn
\begin{corollary}
Let $Z$ be a $\gamma$-locally closed subset of \/ $\BBV$. 
Then,
$\RD'_M(\cor_Z)$ is concentrated in degree $0$. Moreover, 
$\RD'_M(\cor_Z)\simeq \cor_S$ with 
 $\Omega=\Int(Z)$ and $S=\ol{\Omega+\gamma}\cap (\Omega+\gamma^a)$. 
\end{corollary}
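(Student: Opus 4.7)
Set $\Omega=\Int(Z)$, $U=\Omega+\gamma$, and $V=\Omega+\gamma^a$. By the preceding proposition $Z=U\cap\ol V$, with $U$ a $\gamma$-open set and $\ol V$ closed and $\gamma^a$-invariant (hence $\gamma$-closed); in particular $S=\ol U\cap V$. The plan is to apply $\RD'$ to the short exact sequence
\[0\to\cor_{U\setminus\ol V}\to\cor_U\to\cor_Z\to0,\]
which arises because $Z=U\cap\ol V$ is closed in the open set $U$. Since $\ol V^c$ is $\gamma$-open (its complement being $\gamma$-closed), the subset $U\setminus\ol V=U\cap\ol V^c$ is itself $\gamma$-open, so everything reduces to understanding $\RD'$ on $\gamma$-open sets.

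The key preparatory lemma would state: for any $\gamma$-open $W\subset\BBV$, $\RD'(\cor_W)\simeq\cor_{\ol W}$ concentrated in degree $0$. I would prove this by a stalk computation on $\RD'(\cor_W)\simeq\roim{j}\cor_W$, where $j\cl W\hookrightarrow\BBV$: at points of $W$ the stalk is $\cor$ and outside $\ol W$ it vanishes, while at a boundary point of $W$, Proposition~\ref{pro:lipsch} provides a bi-Lipschitz chart sending $W$ onto an open half-space, so $W$ intersected with a small enough neighborhood is contractible, yielding stalk $\cor$ concentrated in degree $0$ with no higher cohomology.

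Applying the key lemma to both $U$ and $U\setminus\ol V$, the long exact sequence obtained by applying $\RD'$ to the short exact sequence above becomes the distinguished triangle
\[\RD'(\cor_Z)\to\cor_{\ol U}\to\cor_{\ol{U\setminus\ol V}}\to[+1],\]
in which the middle map is the canonical restriction to the closed subset $\ol{U\setminus\ol V}\subset\ol U$, hence surjective with kernel $\cor_{\ol U\setminus\ol{U\setminus\ol V}}$. Thus $\RD'(\cor_Z)$ is concentrated in degree $0$ and isomorphic to $\cor_T$, where $T=\ol U\setminus\ol{U\setminus\ol V}$.

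The last step is the set-theoretic identity $T=S$, equivalently $\ol{U\setminus\ol V}=\ol U\setminus V$. The inclusion $\subset$ is automatic since $\ol U\setminus V$ is closed and contains $U\setminus\ol V$. I anticipate the reverse inclusion to be the main obstacle: given $x\in\ol U\setminus V$, one easily handles the case $x\notin\ol V$ (then $\ol V^c$ is an open neighborhood of $x$ meeting $U$ because $x\in\ol U$), but when $x\in\partial V$ one must combine Proposition~\ref{pro:lipsch} applied to $V$ (as a $\gamma^a$-open set) with the $\gamma$-invariance of $\ol V^c$ (which follows from $\ol V+\gamma^a=\ol V$) and that of $U$, in order to produce approximating points of $U\cap\ol V^c$ near $x$.
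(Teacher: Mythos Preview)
Your approach is correct and genuinely different from the paper's. Both begin with the same preparatory lemma (deduced from Proposition~\ref{pro:lipsch}): $\RD'_M(\cor_W)\simeq\cor_{\ol W}$ for any $\gamma$-open $W$. But the paper then writes $\cor_Z\simeq\cor_A\tens\cor_B$ with $A=\Omega+\gamma$ open and $B=\ol{\Omega+\gamma^a}$ closed, observes that $\musupp(\cor_A)\cap\musupp(\cor_B)\subset T^*_\BBV\BBV$ by Corollary~\ref{cor:ssA}, and invokes \cite{KS90}*{Cor.~6.4.3} to get $\RD'_M(\cor_A\tens\cor_B)\simeq\RD'_M(\cor_A)\tens\RD'_M(\cor_B)\simeq\cor_{\ol U}\tens\cor_V=\cor_S$ in one stroke (the dual $\RD'_M(\cor_B)\simeq\cor_V$ also comes from Proposition~\ref{pro:lipsch}, via the complementary $\gamma$-open set). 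Your route through the short exact sequence avoids the microlocal tensor formula entirely, which is a genuine gain in elementarity; the price is the set-theoretic identity $\ol{U\setminus\ol V}=\ol U\setminus V$ and the need to check that the map $\cor_{\ol U}\to\cor_{\ol{U\setminus\ol V}}$ in your triangle is the canonical restriction.

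Two comments. First, the identity is easier than you anticipate and does not need Proposition~\ref{pro:lipsch}: fix $v\in\Int(\gamma)$; for $x\in\ol U$ one has $x+tv\in U$ for every $t>0$ (approximate $x$ by $u\in U$ and note $tv+(x-u)\in\gamma$ once $u$ is close enough), and if moreover $x\notin V$ then $x+tv\notin\ol V$ for every $t>0$ (otherwise the same fact applied to the $\gamma^a$-open set $V$ with $-tv\in\Int(\gamma^a)$ would force $x=(x+tv)-tv\in V$); hence $x+tv\in U\setminus\ol V$ and $x\in\ol{U\setminus\ol V}$. Second, the identification of the map as the restriction is not automatic from the isomorphisms alone; you should point out that under $\RD'_M(\cor_W)\simeq\roim{j_W}\cor_W$ the map induced by $\cor_{W'}\to\cor_W$ (for $W'\subset W$ both $\gamma$-open) is the section-restriction map, hence on stalks the identity over $\ol{W'}$, which is what makes it surjective.
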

\begin{proof}
It follows from Proposition~\ref{pro:lipsch} that $\RD'_M(\cor_{\Omega+\gamma})\simeq \cor_{\ol{\Omega+\gamma}}$
and $\RD'_M(\cor_{\ol{\Omega+\gamma^a}})\simeq\cor_{\Omega+\gamma^a}$. 

Set $A=\Omega+\gamma$ and $B= \ol{\Omega+\gamma^a}$. 
 Then $\cor_A$ and $\cor_B$ are cohomologically constructible. 
By Corollary~\ref{cor:ssA}, $\musupp(\cor_A)\cap\musupp(\cor_B)\subset T^*_\BBV\BBV$. Then $\RD'_M(\cor_A\tens\cor_B)\simeq \RD_M'(\cor_A)\tens \RD_M'(\cor_B)$ 
by~\cite{KS90}*{Cor.~6.4.3}.
\end{proof}

\subsection{An application of the stability theorem}

Here we give an application of the stability theorem (Theorem~\ref{th:stab}) which did not appear in ~\cite{KS18}. Of course, this application is well-known 
for the classical (non-derived) distance. 

Let $(M,d)$ be a subanalytic metric space and let $K_1$ and $K_2$ be two subanalytic compact subsets. 
Define for $i=1,2$
\eqn
&&f_i(\cdot)=d(\cdot,K_i),\\
&& \Gamma_i= \{(x,t)\in M\times\R;d(x,K_i)= t\},\quad G_i=\cor_{\Gamma_i}, \\
&& Z_i=\{(x,t)\in M\times\R;d(x,K_i)\leq t\},\quad F_i=\cor_{Z_i}.
\eneqn
Hence, $\Gamma_i$ is the graph of $f_i$ and $Z_i$ is the epigraph of $f_i$.

\begin{lemma}\label{le:dist1}
One has $d(K_1,K_2)=\vvert f_1-f_2\vvert$. 
\end{lemma}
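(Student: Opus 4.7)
The plan is to show two inequalities, each of which is essentially a one-line consequence of the triangle inequality applied to the distance functions. Here $d(K_1,K_2)$ should be read as the Hausdorff distance, i.e.\
\[
d(K_1,K_2)=\max\Bigl(\sup_{x\in K_1}d(x,K_2),\;\sup_{y\in K_2}d(y,K_1)\Bigr),
\]
and the identity to prove is a completely elementary and classical statement about distance functions of compact subsets of a metric space. The subanalyticity assumption plays no role in this particular lemma; it only matters for the sheaf-theoretic application that follows.

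First I would prove the inequality $\vvert f_1-f_2\vvert\le d(K_1,K_2)$. Fix $x\in M$ and $y\in K_1$. From $d(x,K_2)\le d(x,y)+d(y,K_2)$ and taking the infimum over $y\in K_1$, one obtains $f_2(x)-f_1(x)\le \sup_{y\in K_1}d(y,K_2)$. By symmetry, $f_1(x)-f_2(x)\le \sup_{z\in K_2}d(z,K_1)$, so $|f_1(x)-f_2(x)|\le d(K_1,K_2)$, and taking $\sup_x$ gives the claim.

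For the reverse inequality $d(K_1,K_2)\le \vvert f_1-f_2\vvert$, I would simply evaluate at points of the $K_i$. For $x\in K_1$ we have $f_1(x)=0$, hence $|f_1(x)-f_2(x)|=d(x,K_2)$, which yields $\sup_{x\in K_1}d(x,K_2)\le \vvert f_1-f_2\vvert$. The symmetric argument for $y\in K_2$ gives $\sup_{y\in K_2}d(y,K_1)\le \vvert f_1-f_2\vvert$, and taking the max of the two yields $d(K_1,K_2)\le \vvert f_1-f_2\vvert$.

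There is really no main obstacle: the statement is purely metric and its proof is a standard exercise. The only mild subtlety is to remember that the suprema in the definition of the Hausdorff distance are actually attained (because $K_1,K_2$ are compact and $x\mapsto d(x,K_i)$ is continuous), so there is no issue with strict inequalities when one evaluates $|f_1-f_2|$ at the extremal points.
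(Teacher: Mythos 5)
The paper gives no proof of this lemma at all; it is stated without argument, presumably because the authors regard it as a classical and elementary fact about distance functions. Your proof is correct and, in the absence of a proof in the paper, there is nothing to compare it against beyond verifying it stands on its own, which it does.

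A couple of small remarks. First, you correctly identify the implicit meaning of $d(K_1,K_2)$ as the Hausdorff distance; the paper never says so, but this is the only reading that makes the statement true (the minimum set distance, for instance, vanishes whenever $K_1\subset K_2$ while $\vvert f_1-f_2\vvert$ need not). Second, in the step where you fix $x\in M$, $y\in K_1$ and use $d(x,K_2)\le d(x,y)+d(y,K_2)$, the cleanest way to land on $f_2(x)\le f_1(x)+\sup_{y\in K_1}d(y,K_2)$ is to first replace $d(y,K_2)$ by its supremum over $K_1$ and only then take the infimum over $y$ of $d(x,y)$; as written, ``taking the infimum over $y$'' of both sides does not immediately yield the displayed bound because the right-hand side is not a sum of a $y$-dependent term and a constant. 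This is a presentational nit, not a gap. Finally, your closing remark about the suprema being attained is true but unnecessary: the two inequalities you prove hold as stated without invoking compactness, so the identity follows regardless.
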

\begin{lemma}\label{le:dist2}
One has $\dist(\roim{f_1}\cor_M, \roim{f_2}\cor_M)\leq\vvert f_1-f_2\vvert$. 
\end{lemma}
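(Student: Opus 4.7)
The plan is to prove Lemma~\ref{le:dist2} as a direct specialization of the stability theorem (Theorem~\ref{th:stab}). I would take $X = M$, target vector space $\BBV = \R$, the two continuous maps $f_1, f_2 \colon M \to \R$ defined by $f_i(x) = d(x, K_i)$, and input sheaf $F = \cor_M \in \Derb(\cor_M)$. With these choices, the pushforwards $\roim{f_i}\cor_M$ appearing in the conclusion are precisely the ones in the statement of Theorem~\ref{th:stab}.

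The only verifications needed are the hypotheses of that theorem. The space $M$ is locally compact since it is a subanalytic metric space. Each $f_i$ is continuous; in fact, the triangle inequality immediately gives the 1-Lipschitz bound
\begin{equation*}
|f_i(x) - f_i(y)| = |d(x, K_i) - d(y, K_i)| \leq d(x, y),
\end{equation*}
so $f_i$ is continuous on $M$. Therefore Theorem~\ref{th:stab} applies and yields
\begin{equation*}
\dist(\roim{f_1}\cor_M, \roim{f_2}\cor_M) \leq \vvert f_1 - f_2 \vvert,
\end{equation*}
which is exactly the claim, with $\vvert f_1 - f_2 \vvert = \sup_{x \in M}|d(x, K_1) - d(x, K_2)|$ being the quantity computed in Lemma~\ref{le:dist1}.

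There is no real obstacle here: the whole content of the lemma is that the stability theorem applied to the constant sheaf $\cor_M$ and the pair of distance functions $d(\scdot,K_i)$ recovers the classical estimate relating the convolution distance of the pushforwards to the supremum distance of the functions. The auxiliary sheaves $F_i = \cor_{Z_i}$ and $G_i = \cor_{\Gamma_i}$ introduced before the lemma do not actually enter the proof; they are set up to show how epigraphs and graphs of the distance functions encode the pushforwards $\roim{f_i}\cor_M$ in a form useful for interpretation, but the inequality itself is obtained simply by invoking Theorem~\ref{th:stab}.
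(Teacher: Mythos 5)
Your proof is correct and takes the same route as the paper, which simply observes that the lemma is the special case of Theorem~\ref{th:stab} obtained by taking $F=\cor_M$ and the distance functions $f_i(\cdot)=d(\cdot,K_i)$. You have merely filled in the (immediate) hypothesis checks explicitly.
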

\begin{proof}
This is a particular case of  Theorem~\ref{th:stab}.
\end{proof}

\begin{lemma}\label{le:dist3}
Denote by $\gamma$ the cone $\{t\leq0\}$ in $\R$ and still denote by $\phig$ the map $M\times\R\to M\times\R_\gamma$. Then
$F_i\simeq\opb{\phig}\roim{\phig}G_i$. 
\end{lemma}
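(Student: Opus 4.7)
The plan is to exploit the fact that $F_i$ is already a $\gamma$-sheaf and to construct the desired isomorphism from the closed inclusion $\Gamma_i\subset Z_i$; geometrically, since $Z_i=\Gamma_i+\gamma^a$ is the ``$\gamma$-closure'' of the graph, one expects $F_i=\cor_{Z_i}$ to realize the image of $G_i$ under the $\gamma$-sheaf projector $\opb{\phig}\roim{\phig}$. First I would check that $F_i$ is itself a $\gamma$-sheaf: $Z_i$ is $\gamma^a$-invariant (since $d(x,K_i)\le t$ and $s\ge0$ imply $d(x,K_i)\le t+s$), hence $\gamma$-closed, so by Corollary~\ref{cor:ssA} we have $\musupp(F_i)\subset\BBV\times\gammac$, and Theorem~\ref{th:eqvderbg} yields an isomorphism $F_i\isoto\opb{\phig}\roim{\phig}F_i$. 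The closed inclusion $\Gamma_i\subset Z_i$ furnishes the natural quotient $\alpha\cl F_i=\cor_{Z_i}\to\cor_{\Gamma_i}=G_i$, from which I define the candidate
\[
\beta\cl F_i\isoto\opb{\phig}\roim{\phig}F_i\xrightarrow{\opb{\phig}\roim{\phig}\alpha}\opb{\phig}\roim{\phig}G_i.
\]

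The next step is to show that $\beta$ is an isomorphism via a stalk computation. At a point $(x_0,t_0)\in M\times\R$, a cofinal system of $\gamma$-open neighborhoods is $U_\epsilon\eqdot B_\epsilon(x_0)\times(-\infty,t_0+\epsilon)$ for $\epsilon>0$; since $\Gamma_i$ and $Z_i$ are closed in $M\times\R$,
\[
(\opb{\phig}\roim{\phig}G_i)_{(x_0,t_0)}\simeq\indlim[\epsilon\to0^+]\rsect(U_\epsilon\cap\Gamma_i;\cor),\qquad F_{i,(x_0,t_0)}\simeq\indlim[\epsilon\to0^+]\rsect(U_\epsilon\cap Z_i;\cor),
\]
and $\beta$ corresponds to the restriction maps $\rsect(U_\epsilon\cap Z_i;\cor)\to\rsect(U_\epsilon\cap\Gamma_i;\cor)$. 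Projection $(x,t)\mapsto x$ identifies $U_\epsilon\cap\Gamma_i$ with $V_\epsilon\eqdot\{x\in B_\epsilon(x_0)\,;\,d(x,K_i)<t_0+\epsilon\}$, and presents $U_\epsilon\cap Z_i$ as a fiber bundle over its projection with contractible interval fibers $[d(x,K_i),t_0+\epsilon)$.

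If $(x_0,t_0)\notin Z_i$, the $1$-Lipschitz property of $d(\cdot,K_i)$ gives $V_\epsilon=\varnothing$ for $\epsilon<\tfrac12(d(x_0,K_i)-t_0)$, so both stalks vanish. If $(x_0,t_0)\in Z_i$ and $\epsilon>0$ is small, the triangle inequality $d(x,K_i)\le d(x_0,K_i)+d(x,x_0)<t_0+\epsilon$ on the open ball gives $V_\epsilon=B_\epsilon(x_0)$; passing to the colimit, both stalks become the stalk of the constant sheaf $\cor$ on $M$ at $x_0$, namely $\cor$ in degree $0$, and $\beta$ becomes the identity. The main technical point is establishing the strict inequality ensuring $V_\epsilon=B_\epsilon(x_0)$ in the boundary case $(x_0,t_0)\in\Gamma_i$, and checking that the restriction map is non-zero on $\pi_0$ so as to give an iso $\cor\isoto\cor$; both are direct consequences of the $1$-Lipschitz continuity of $d(\cdot,K_i)$ and the openness of $B_\epsilon(x_0)$.
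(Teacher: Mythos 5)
The paper states this lemma without proof, so there is nothing to compare against; your argument is correct and is a natural direct approach. The plan is sound: $Z_i$ is $\gamma$-closed, so $F_i=\cor_{Z_i}$ is a $\gamma$-sheaf and $F_i\isoto\opb{\phig}\roim{\phig}F_i$; composing with $\opb{\phig}\roim{\phig}$ applied to the restriction $\cor_{Z_i}\to\cor_{\Gamma_i}$ gives the candidate $\beta$; and the stalk of $\opb{\phig}\roim{\phig}(-)$ at $(x_0,t_0)$ is the filtered colimit of $\rsect$ over the cofinal family $U_\epsilon=B_\epsilon(x_0)\times(-\infty,t_0+\epsilon)$ of $\gamma$-open neighborhoods. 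The Lipschitz case analysis ($V_\epsilon=\varnothing$ off $Z_i$, $V_\epsilon=B_\epsilon(x_0)$ on $Z_i$) is correct. Two fixes. First, $U_\epsilon\cap Z_i\to V_\epsilon$ is not a fiber bundle, and ``contractible interval fibers'' alone does not yield the needed quasi-isomorphism on $\rsect$; what you actually have, and should say, is that $((x,t),u)\mapsto(x,(1-u)t+u\,d(x,K_i))$ is a deformation retraction of $U_\epsilon\cap Z_i$ onto $U_\epsilon\cap\Gamma_i$, which gives $\rsect(U_\epsilon\cap Z_i;\cor)\isoto\rsect(U_\epsilon\cap\Gamma_i;\cor)$ compatibly with $\beta$ and makes the closing remark about $\pi_0$ superfluous. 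Second, Corollary~\ref{cor:ssA} and Theorem~\ref{th:eqvderbg} are stated for a vector space $\BBV$ with a cone of nonempty interior, not for $M\times\R$ with $\gamma=\{t\le0\}\subset\R$; the paper itself uses this relative $\gamma$-topology implicitly in this subsection, so this is acceptable, but you could avoid the appeal altogether since the same stalk computation applied with $Z_i$ in place of $\Gamma_i$ verifies $F_i\isoto\opb{\phig}\roim{\phig}F_i$ directly. Finally, a slicker alternative worth noting: by the kernel formula from the proof of Proposition~\ref{pro:phigPL}, $\opb{\phig}\roim{\phig}\cor_{\Gamma_i}\simeq\roim{q_1}\bl\cor_{Z(\gamma)}\tens\opb{q_2}\cor_{\Gamma_i}\br$, and $q_1$ restricted to $Z(\gamma)\cap\opb{q_2}\Gamma_i$ is a proper homeomorphism onto $Z_i$, giving $\cor_{Z_i}=F_i$ in one step.
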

Let $p$ denote the projection $M\times\R\to\R$.
\begin{lemma}\label{le:dist4}
One has $\dist(\roim{p}\opb{\phig}\roim{\phig}G_1, \roim{p}\opb{\phig}\roim{\phig}G_2)\leq \dist(\roim{p}G_1,\roim{p}G_2)$. 
\end{lemma}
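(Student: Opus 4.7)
The strategy is to show that the ``$\gamma$-ification'' functor $\opb{\phig}\roim{\phig}$ commutes both with $\roim{p}$ and with the convolution kernels $K_a\star(\cdot)$ defining the distance; non-expansivity of the distance then follows formally.

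\textit{Step 1: commutation with $\roim{p}$.} I would first establish a natural isomorphism
\eqn
&&\roim{p}\opb{\phig}\roim{\phig}G\isoto\opb{\phig}\roim{\phig}\roim{p}G \qtq[for] G\in\Derb(\cor_{M\times\R}).
\eneqn
This comes from the commutative square formed by $\phig\cl M\times\R\to M\times\R_\gamma$, $\phig\cl\R\to\R_\gamma$, the projection $p$, and the induced continuous map $p_\gamma\cl M\times\R_\gamma\to\R_\gamma$ (well defined because $p$ sends $\tilde\gamma$-invariant open sets to $\gamma$-invariant open sets). Functoriality yields $\roim{p_\gamma}\roim{\phig}\simeq\roim{\phig}\roim{p}$, and invoking the equivalence $\opb{\phig}\simeq(\roim{\phig})^{-1}$ on $\gamma$-sheaves from Theorem~\ref{th:eqvderbg} produces the desired identification, once one checks that both sides belong to $\Derb_{\gammac}(\cor_\R)$ via a microsupport argument.

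\textit{Step 2: commutation with $K_a\star(\cdot)$.} On $\R$ the $\gamma$-ification admits the explicit realization
\eqn
&&\opb{\phig}\roim{\phig}H\simeq H\star\cor_{\gamma^a},
\eneqn
checked stalk-by-stalk exactly as in the computation behind Lemma~\ref{le:dist3}. Associativity and commutativity of $\star$ then give
$\opb{\phig}\roim{\phig}(K_a\star H)\simeq K_a\star H\star\cor_{\gamma^a}\simeq K_a\star\opb{\phig}\roim{\phig}H$ for every $a\in\R$.

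\textit{Step 3: conclusion.} Pick $a>\dist(\roim{p}G_1,\roim{p}G_2)$ and choose morphisms $f\cl K_a\star\roim{p}G_1\to\roim{p}G_2$, $g\cl K_a\star\roim{p}G_2\to\roim{p}G_1$ realizing the $a$-isomorphism of Definition~\ref{def:convdist}. Applying $\opb{\phig}\roim{\phig}$ and combining Steps~1 and~2 yields morphisms
$K_a\star\roim{p}\opb{\phig}\roim{\phig}G_i\to\roim{p}\opb{\phig}\roim{\phig}G_{3-i}$, and the two Definition~\ref{def:convdist} compatibility conditions are preserved by functoriality together with the natural isomorphisms of Steps~1--2. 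Hence $\roim{p}\opb{\phig}\roim{\phig}G_1$ and $\roim{p}\opb{\phig}\roim{\phig}G_2$ are $a$-isomorphic, and taking the infimum over such $a$ gives the desired inequality.

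\textit{Main obstacle.} The technical heart is the commutation in Step~1: since $p$ is not proper, a direct base-change argument is delicate and one must control microsupports to apply the equivalence of Theorem~\ref{th:eqvderbg}. A cleaner alternative is to upgrade the convolution identification of Step~2 to $\opb{\phig}\roim{\phig}G\simeq G\star_\R\cor_{\gamma^a}$ on $M\times\R$ (convolution along the $\R$-direction) and deduce the commutation from the projection formula $\roim{p}(G\star_\R\cor_{\gamma^a})\simeq(\roim{p}G)\star\cor_{\gamma^a}$, which bypasses the direct microlocal argument.
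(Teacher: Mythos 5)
Your plan follows the same route as the paper's two-line proof: the paper first observes that $\roim{p}$ and $\opb{\phig}\roim{\phig}$ commute (your Step~1) and then cites \cite{KS18}*{Prop.~3.6}, the non-expansivity of $\opb{\phig}\roim{\phig}$ for the convolution distance, which is exactly what your Steps~2--3 re-derive via commutation with $K_a\star(\scbul)$. One caveat on Step~2: the functor $\opb{\phig}\roim{\phig}$ is a \emph{non-proper} integral transform (stalks are $\rsect(x+\gamma;H)$), whereas $H\star\cor_{\gamma^a}=\reim{s}(H\etens\cor_{\gamma^a})$ produces $\rsectc$; the two agree only under a properness hypothesis, which is automatic here because the $\roim{p}G_i$ have compact support, so either record that hypothesis explicitly or use the non-proper convolution in the stated identity.
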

\begin{proof}
The two functors $\roim{p}$ and $\opb{\phig}\roim{\phig}$ commute (with obvious notations). Then the result follows 
from~\cite{KS18}*{Prop.~3.6}. 
\end{proof}
Applying these lemmas, one gets 
 a derived version of a result of~\cite{CEH07}.

\begin{theorem}
One has $\dist(\roim{p}F_1,\roim{p}F_2)\leq d(K_1,K_2)$. 
\end{theorem}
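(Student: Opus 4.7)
The plan is to chain the four lemmas just established. Explicitly, I would produce the inequality
\[
\dist(\roim{p}F_1,\roim{p}F_2) \;\leq\; \dist(\roim{p}G_1,\roim{p}G_2) \;=\; \dist(\roim{f_1}\cor_M,\roim{f_2}\cor_M) \;\leq\; \vvert f_1-f_2\vvert \;=\; d(K_1,K_2).
\]
First I would rewrite $\roim{p}F_i$ using Lemma~\ref{le:dist3}: since $F_i \simeq \opb{\phig}\roim{\phig}G_i$, one has $\roim{p}F_i \simeq \roim{p}\opb{\phig}\roim{\phig}G_i$. Lemma~\ref{le:dist4} then directly gives the first inequality $\dist(\roim{p}F_1,\roim{p}F_2) \leq \dist(\roim{p}G_1,\roim{p}G_2)$.

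Next I would identify $\roim{p}G_i$ with $\roim{f_i}\cor_M$. Let $\iota_i\cl\Gamma_i\into M\times\R$ be the closed embedding of the graph. Since $\iota_i$ is a closed embedding (hence proper), $\cor_{\Gamma_i} \simeq \roim{\iota_i}\cor_{\Gamma_i}$, and therefore $\roim{p}G_i \simeq \roim{p\circ\iota_i}\cor_{\Gamma_i}$. The restriction $p\circ\iota_i$ is, under the canonical homeomorphism $\Gamma_i \isoto M$ given by $(x,f_i(x))\mapsto x$, exactly the function $f_i$. This gives $\roim{p}G_i \simeq \roim{f_i}\cor_M$, yielding the middle equality.

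Finally, the stability theorem (Lemma~\ref{le:dist2}, i.e.\ Theorem~\ref{th:stab} applied to $F=\cor_M$) gives $\dist(\roim{f_1}\cor_M,\roim{f_2}\cor_M)\leq\vvert f_1-f_2\vvert$, and Lemma~\ref{le:dist1} gives $\vvert f_1-f_2\vvert = d(K_1,K_2)$. I do not expect any serious obstacle here: the content is entirely in the preceding four lemmas, and the theorem is essentially their concatenation. The only point that deserves a moment of care is the identification $\roim{p}G_i\simeq \roim{f_i}\cor_M$, which needs the graph projection to be a homeomorphism and $\iota_i$ to be proper, both of which hold because $f_i$ is continuous.
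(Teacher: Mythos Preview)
Your proof is correct and follows essentially the same approach as the paper: chain Lemmas~\ref{le:dist3} and~\ref{le:dist4}, identify $\roim{p}G_i\simeq\roim{f_i}\cor_M$, then apply Lemmas~\ref{le:dist2} and~\ref{le:dist1}. Your version is in fact slightly more detailed, since the paper simply asserts the identification $\roim{p}G_i\simeq\roim{f_i}\cor_M$ without the graph-embedding argument you supply.
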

\begin{proof}
We have 
\eqn
&&\roim{p}G_i\simeq\roim{f_i}\cor_M.
\eneqn
Applying Lemma~\ref{le:dist2}, we get
\eqn
&&\dist(\roim{p}G_1,\roim{p}G_2)\leq \vvert f_1-f_2\vvert. 
\eneqn
To conclude, apply Lemma~\ref{le:dist3},~\ref{le:dist4} and~\ref{le:dist1}.

\end{proof}

\subsection{Review on \PL $\gamma$-sheaves}\label{subsection:PLgammashv}

From now on, we shall assume that the cone $\gamma$ satisfies:
\eq\label{hyp1}
&&\parbox{75ex}{\em{
$\gamma$ is  a closed proper convex  polyhedral cone with non-empty interior. }
}\eneq

\begin{definition}\label{def:bar}
Assume~\eqref{hyp1}.
\banum
\item
A {\em \PL $\gamma$-barcode} $(A, Z)$ in $\BBV$
is the data of  a {\em finite} set of indices  $A$ and a  family 
$Z=\{Z_a\}_{a\in A}$ of    non-empty,  $\gamma$-locally closed, 
convex polyhedra of $\BBV$. 
\item
A disjoint $\gamma$-barcode is
a $\gamma$-barcode $(A, Z)$ such that 
$Z_a\cap Z_b=\varnothing$ for $a\neq b$. 
\item
The support of a  $\gamma$-barcode  $(A, Z)$, denoted  by  $\supp(A,Z)$, is the set 
$\bigcup_{a\in A}\ol{Z_a}$. 
\eanum
\end{definition}
\begin{remark}
In~\cite{KS18}, we defined a \PL $\gamma$-stratification of a closed set $S$ as a barcode $(A,Z)$ such that $\supp(A,Z)=S$ and $Z_a\cap Z_b=\varnothing$ for $a\neq b$. However, since a  \PL $\gamma$-stratification is not a \PL stratification (see Definition~\ref{def:PLstrat}), we prefer here to avoid this terminology and use the notion of a  disjoint $\gamma$-barcode.
\end{remark}

We shall use the notations:
\eq\label{not:3}
&&\left\{\begin{array}{l}
\Derb_{\rclg}(\cor_{\BBV})\eqdot\Derb_{\rcl}(\cor_{\BBV})\cap\Derb_{\gammac}(\cor_{\BBV}),\\[1ex]
\mdrclg[\cor_{\BBV}]\eqdot\md[\cor_\BBV]\cap\Derb_{\rclg}(\cor_{\BBV}).
\end{array}\right.
\eneq
Note that, in view of~\eqref{eq:PLH0PL} and Theorem~\ref{th:eqvderbg}:
\eq\label{eq:PLgammaH0}
&&F\in \Derb_{\rclg}(\cor_{\BBV})\Leftrightarrow H^j(F)\in \mdrclg[\cor_{\BBV}]\mbox{ for all }j\in\Z.
\eneq

\begin{proposition}\label{pro:phigPL}
Assume~\eqref{hyp1}. If  $F\in \Derb_{\rcl}(\cor_{\BBV})$
then $\opb{\phig}\roim{\phig}F\in \Derb_{\rclg}(\cor_{\BBV})$.
\end{proposition}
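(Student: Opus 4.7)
The plan is first to reduce to a statement purely about \PL sheaves. By Theorem~\ref{th:eqvderbg}, for every $F\in\Derb(\cor_\BBV)$ the object $\opb{\phig}\roim{\phig}F$ lies automatically in $\Derb_{\gammac}(\cor_\BBV)$. Combined with the definition $\Derb_{\rclg}(\cor_\BBV)=\Derb_{\rcl}(\cor_\BBV)\cap\Derb_{\gammac}(\cor_\BBV)$ from~\eqref{not:3}, this reduces the proposition to showing that $\opb{\phig}\roim{\phig}F$ is \PL whenever $F$ is.

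Since $\opb{\phig}\roim{\phig}$ is a triangulated functor and $\Derb_{\rcl}(\cor_\BBV)$ is a triangulated subcategory (Corollary~\ref{cor:PL}), the full subcategory
\[
\shs\eqdot\set{F\in\Derb_{\rcl}(\cor_\BBV)}{\opb{\phig}\roim{\phig}F\in\Derb_{\rcl}(\cor_\BBV)}
\]
is itself triangulated. By Theorem~\ref{th:genertri} it then suffices to show that $\cor_P\in\shs$ for every locally closed convex polyhedron $P$.

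Fix such a $P$ and set $G\eqdot\opb{\phig}\roim{\phig}\cor_P$. The $\gamma$-open neighborhoods of a point $x\in\BBV$ admit a cofinal family $\{V+\gamma\}$ indexed by open convex Euclidean neighborhoods $V$ of $x$, whence
\[
G_x\simeq\indlim[V\ni x]\rsect(V+\gamma;\cor_P).
\]
Because $P$ is defined by finitely many linear (in)equalities and $\gamma$ is polyhedral, the combinatorial type of the intersection $(V+\gamma)\cap P$ inside $V+\gamma$ stabilizes as $V$ shrinks, and depends only on the position of $x$ relative to a certain finite arrangement of affine hyperplanes --- namely those carrying the facets of $P$ together with their translates along the extremal rays of $\gamma^a$. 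This yields a \PL stratification $\{Z_\alpha\}_{\alpha\in A}$ of $\BBV$ on which $G$ is locally constant of finite rank.

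By Proposition~\ref{pro:mustrat}, $\{Z_\alpha\}$ is a $\mu$-stratification, so $\musupp(G)\subset\bigsqcup_{\alpha\in A}T^*_{Z_\alpha}\BBV$, which is a closed conic \PL isotropic subset of $T^*\BBV$, and Theorem~\ref{th:PL} then delivers $G\in\Derb_{\rcl}(\cor_\BBV)$. The main obstacle is the combinatorial step in the third paragraph: identifying the correct hyperplane arrangement, verifying that the colimit stabilizes to a finite-dimensional vector space, and confirming local constancy across the resulting strata. In the special case where $P$ is closed this simplifies substantially, since $(x+\gamma)\cap P$ is then closed convex, hence contractible or empty, yielding directly $G\simeq\cor_{\ol{P+\gamma^a}}$ with $\ol{P+\gamma^a}$ a closed convex polyhedron.
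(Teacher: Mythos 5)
Your reduction to generators is valid and the first two paragraphs are correct, but this is a genuinely different route from the paper's, and your route has a gap at its core. The paper never reduces to $\cor_P$: instead it uses the kernel formula from \cite{KS90}*{Prop.~3.5.4},
\[
\opb{\phig}\roim{\phig}F\simeq\roim{q_1}\bl\cor_{Z(\gamma)}\tens\opb{q_2}F\br,
\]
where $Z(\gamma)=\{(x,y)\in\BBV\times\BBV\,;\,y-x\in\gamma\}$ and $q_1,q_2$ are the projections. Since $\gamma$ is polyhedral, $Z(\gamma)$ is a closed convex polyhedron, so $\cor_{Z(\gamma)}$ is \PL, and Corollary~\ref{cor:PL} (stability of \PL under $\tens$, $\opb{f}$, $\roim{f}$ for linear $f$) finishes the proof in one line for arbitrary \PL $F$. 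This avoids all case analysis.

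By contrast, the entire content of the proposition in your approach is concentrated in the third paragraph, which you yourself flag as the ``main obstacle.'' The claim that the combinatorial type of $(V+\gamma)\cap P$ stabilizes and is governed by a finite arrangement of hyperplanes, and that the resulting stratification carries a \emph{locally constant} (not merely stalkwise constant) restriction of $G$, is not worked out. Knowing stalks is not sufficient: one must control restriction maps across strata, or bound $\musupp(G)$ by a \PL isotropic directly; the step from ``stalks are constant on strata'' to ``locally constant on strata'' is exactly where such proofs go wrong. The closed-$P$ shortcut at the end also does not close the gap: identifying $G$ with $\cor_{\ol{P+\gamma^a}}$ from the observation that $(x+\gamma)\cap P$ is closed convex implicitly uses proper base change, but $q_1$ is in general not proper on the support $\{(x,y)\,;\,y\in P,\,y-x\in\gamma\}$, so the stalk of $\roim{q_1}$ is not simply the cohomology of the closed fiber. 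In short, the reduction framework you set up is sound, but the hard step it reduces to is left unproved, whereas the paper's kernel argument dispenses with it entirely.
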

\begin{proof}
By Theorem~\ref{th:eqvderbg}, it remains to prove that $\opb{\phig}\roim{\phig}F$ is \PL. 
Denote by $Z(\gamma)$ the set $\{(x,y)\in\BBV\times\BBV;y-x\in\gamma\}$ and denote by $q_1$ and $q_2$ the first and second projections defined on $\BBV\times\BBV$. Then (see~\cite{KS90}*{Prop.~3.5.4}):
\eqn
&&\opb{\phig}\roim{\phig}F\simeq \roim{q_1}(\cor_{Z(\gamma)}\tens \opb{q_2}F)
\eneqn
and the result follows from Corollary~\ref{cor:PL}.
\end{proof}

Recall Notation~\ref{not:Vinfty}.

\begin{corollary}\label{cor:phigPL}
Let  $F\in\Derb_{\rcg}(\cor_{\BBV_\infty})$. 
For each $\epsilon>0$ there exists 
$G\in \Derb_{\rclg}(\cor_\BBV)$ such that $\dist(F,G)\leq\epsilon$. 
\end{corollary}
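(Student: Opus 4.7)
The plan is to combine the approximation theorem (Theorem~\ref{th:approx}) with Proposition~\ref{pro:phigPL}: first approximate $F$ by an arbitrary \PL sheaf, then ``project'' that sheaf back into the world of $\gamma$-sheaves using $\opb{\phig}\roim{\phig}$, and finally verify that this projection does not worsen the convolution distance. Concretely, since $F\in\Derb_\Rc(\cor_{\BBV_\infty})$, Theorem~\ref{th:approx} furnishes some $G_0\in\Derb_{\rcl}(\cor_\BBV)$ with $\dist(F,G_0)\leq\epsilon$. I would then set
\[
G\seteq\opb{\phig}\roim{\phig}G_0.
\]
By Proposition~\ref{pro:phigPL}, $G$ automatically lies in $\Derb_{\rclg}(\cor_\BBV)$, so the only remaining point is to check the distance estimate $\dist(F,G)\leq\epsilon$.

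For this last point I would use the fact that $F$ is already a $\gamma$-sheaf, so by Theorem~\ref{th:eqvderbg} the counit gives an isomorphism $F\simeq\opb{\phig}\roim{\phig}F$. Denoting $\Phi\seteq\opb{\phig}\roim{\phig}$, one has
\[
\dist(F,G)=\dist(\Phi F,\Phi G_0),
\]
and it therefore suffices to show that $\Phi$ is $1$-Lipschitz for the convolution distance, i.e.\ $\dist(\Phi F_1,\Phi F_2)\leq\dist(F_1,F_2)$. This, in turn, rests on the commutation $\Phi(K_a\star H)\simeq K_a\star\Phi H$ for all $a\in\R$; it is precisely the commutation invoked in the proof of Lemma~\ref{le:dist4} via~\cite{KS18}*{Prop.~3.6}, and it can also be seen directly from the integral formula $\Phi H\simeq\roim{q_1}(\cor_{Z(\gamma)}\tens\opb{q_2}H)$ recalled in the proof of Proposition~\ref{pro:phigPL} together with associativity of convolution.

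Given the commutation, any pair $(f,g)$ witnessing an $a$-iso\-mor\-phism between $F_1$ and $F_2$ in the sense of Definition~\ref{def:convdist} transports, under $\Phi$, to a pair $(\Phi f,\Phi g)$ between $\Phi F_1$ and $\Phi F_2$: the source $K_a\star\Phi F_i$ is identified with $\Phi(K_a\star F_i)$, and the structural morphisms $\chi_{0,2a}\star(\scbul)$ are functorial, so the two triangle identities are preserved by $\Phi$. This yields the $1$-Lipschitz property, and hence $\dist(F,G)\leq\dist(F,G_0)\leq\epsilon$, completing the argument.

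I do not foresee any real obstacle: Theorem~\ref{th:approx} does the hard work of producing a \PL model, Proposition~\ref{pro:phigPL} ensures the $\gamma$-projected model is still \PL, and the only ingredient which is not purely formal is the commutation of $\Phi$ with $K_a\star(\scbul)$, which is already in the literature. The corollary is thus essentially a two-step combination of previously established statements.
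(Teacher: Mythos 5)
Your proof is essentially the same as the paper's: approximate $F$ by a \PL sheaf $G_0$ via Theorem~\ref{th:approx}, set $G=\opb{\phig}\roim{\phig}G_0$ (which is in $\Derb_{\rclg}(\cor_\BBV)$ by Proposition~\ref{pro:phigPL}), and use the $1$-Lipschitz property of $\opb{\phig}\roim{\phig}$ together with $F\simeq\opb{\phig}\roim{\phig}F$ to conclude. The paper invokes~\cite{KS18}*{Prop.~3.6} directly for the Lipschitz estimate, whereas you sketch its proof from the commutation with $K_a\star(\scbul)$; this is a harmless expansion of the same argument.
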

\begin{proof}
We shall apply Theorem~\ref{th:approx}.
There exists $G' \in \Derb_{\rcl}(\cor_\BBV)$ such that
$\dist(F,G')\leq\epsilon$. 
Then, $G\eqdot \opb{\phig}\roim{\phig}G'\in \Derb_{\rclg}(\cor_\BBV)$ 
by Proposition~\ref{pro:phigPL}. Moreover, by~\cite{KS18}*{Prop.~3.6}:
$$\dist\bl\opb{\phig}\roim{\phig}F,\opb{\phig}\roim{\phig} G'\br
\le\dist(F,G')\le\epsilon.$$
Since $F$ is a $\gamma$-sheaf, one has
$\opb{\phig}\roim{\phig}F\simeq F$.
\end{proof}

The three theorems below are the main results of~\cite{KS18}. We recall them for the reader's convenience. 
 
\begin{theorem}[\cite{KS18}*{Th.~4.10}]\label{th:germofF}
Assume~\eqref{hyp1} and let $F\in \Derb_{\rcg}(\cor_{\BBV})$. Then for each $x\in\BBV$, there exists an open neighborhood $U$ of $x$ such that $F\vert_{(x+\gamma^a)\cap U}$ is constant. 
\end{theorem}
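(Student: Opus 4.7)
The plan is to combine $\R$-constructibility of $F$ with a microlocal Morse-theoretic vanishing based on the polar identity $\gamma^\circ\cap\gamma^{\circ a}=\{0\}$, which holds because $\gamma$ has nonempty interior.

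By $\R$-constructibility, fix a subanalytic $\mu$-stratification $\{S_a\}_{a\in A}$ of $\BBV$ such that $F|_{S_a}$ is locally constant and $\musupp(F)\subset\bigsqcup_aT^*_{S_a}\BBV$ (\cite{KS90}*{Prop.~8.4.1}). Let $S_{a_0}$ contain $x_0$. Choose $U$ to be an open convex neighborhood of $x_0$ small enough that $U$ meets only strata $S_b$ with $x_0\in\ol{S_b}$, and so that $F|_{U\cap S_b}$ is constant of value $L_b$ on each such piece. Set $Z=(x_0+\gamma^a)\cap U$, which is convex hence contractible.

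\emph{Reduction.} For every $y\in Z$, there is a natural specialization map $F_y\to F_{x_0}$ induced by the inclusion of directed systems of $\gamma$-open neighborhoods (every $\gamma$-open $V\ni y$ contains $x_0$, since $x_0\in y+\gamma$ whenever $y\in x_0+\gamma^a$). If we show this map is an isomorphism for every $y\in Z$, then $L_b=F_{x_0}=L_{a_0}$ for every $S_b$ meeting $Z$; combining with local constancy on strata and convexity of $Z$, a standard constructible gluing argument yields $F|_Z\simeq(F_{x_0})_Z$.

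\emph{Microlocal core.} Identify the fiber of $F_y\to F_{x_0}$ with a local cohomology of $F$ supported on the sliver $(y+\gamma)\setminus(x_0+\gamma)$, using the $\gamma$-sheaf stalk formula from the proof of Proposition~\ref{pro:phigPL} (via \cite{KS90}*{Prop.~3.5.4}); here the inclusion $x_0+\gamma\subset y+\gamma$ used to define the restriction follows from $y-x_0\in\gamma^a=-\gamma$. Since $\gamma$ is polyhedral, this sliver is bounded by finitely many hyperplanes whose normals $\xi$ are facet normals of $\gamma$, hence lie in $\gamma^\circ\setminus\{0\}$. The microlocal Morse criterion \cite{KS90}*{Cor.~5.4.19} then gives the vanishing of this local cohomology at every point $p$, provided each bounding normal $\xi$ avoids $\musupp(F)_p$. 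Since $\musupp(F)\subset\BBV\times\gamma^{\circ a}$ by hypothesis, this reduces to the polar identity
\eqn
&&\gamma^\circ\cap\gamma^{\circ a}=\{0\},
\eneqn
which holds because $\gamma$ has nonempty interior: any $\xi$ in both polars satisfies $\langle\xi,v\rangle=0$ for all $v\in\gamma$, forcing $\xi=0$. Corners of the sliver (where several facets meet) are treated by iterating the Morse vanishing along each facet normal, using the $\mu$-stratification property (Proposition~\ref{pro:mustrat}) to ensure $\musupp(F)$ behaves compatibly across the adjacent strata of the sliver.

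\emph{Main obstacle.} The central technical hurdle is identifying the fiber of $F_y\to F_{x_0}$ with a local cohomology expressible in terms of the facet geometry of $\gamma$, and iterating the Morse vanishing cleanly across corners of the sliver. Polyhedrality of $\gamma$ is indispensable, as it reduces the argument to finitely many facet normals; for a non-polyhedral proper convex cone a limiting argument along the curved boundary would be required. The smallness of $U$ is used to keep $Z$ within the ``adjacent-stratum regime'' so that the local cohomology is controlled by the geometry at $x_0$.
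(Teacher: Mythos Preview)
This theorem is not proved in the present paper: it is one of the three results explicitly quoted from \cite{KS18} without proof (see the sentence immediately preceding Theorem~\ref{th:germofF}). There is therefore no argument here against which to compare your attempt.

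Evaluating your sketch on its own merits: the strategic ingredients are the right ones. The polar identity $\gamma^\circ\cap\gamma^{\circ a}=\{0\}$ (forced by $\Int(\gamma)\neq\emptyset$), the polyhedrality of $\gamma$ reducing matters to finitely many facet normals, and the stalk description of a $\gamma$-sheaf via \cite{KS90}*{Prop.~3.5.4} are indeed what underlies the proof in \cite{KS18}.

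That said, your ``Reduction'' step contains a genuine gap. Showing that each specialization map $F_y\to F_{x_0}$ is an isomorphism does \emph{not} by itself yield $F\vert_Z\simeq(F_{x_0})_Z$; constant stalks together with local constancy on strata is perfectly compatible with nontrivial gluing across strata, and ``a standard constructible gluing argument'' is not an argument. What is needed is an actual morphism of sheaves on $Z$ whose stalk at each $y$ is the specialization map. Such a morphism does exist --- one can obtain it by pulling back along $\phig$ the adjunction $G\to \oim{j}\opb{j}G$ for $G=\roim{\phig}F$ and $j\cl\{x_0\}\hookrightarrow\BBV_\gamma$, using that $x_0+\gamma^a$ is the $\gamma$-closure of $\{x_0\}$ --- but you have not produced it.

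Your ``Microlocal core'' is likewise too schematic to constitute a proof. The identification of the cone of $F_y\to F_{x_0}$ with a specific local cohomology over the sliver $(y+\gamma)\setminus(x_0+\gamma)$, and the iteration of the Morse vanishing across corners where several facets meet, are precisely where the work lies; acknowledging them as the ``main obstacle'' in your last paragraph is accurate, but it does not discharge them.
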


\begin{theorem}[\cite{KS18}*{Th.~4.14}] \label{th:locctonflat}
Assume~\eqref{hyp1} and let $F\in \Derb_{\rcg}(\cor_{\BBV})$. 
 Let $\Omega$ be a $\gamma$-flat open set and let 
$Z=(\Omega+\gamma)\cap\ol{\;\Omega+\gamma^a\;}$,  a $\gamma$-locally closed subset. 
Assume that  $F\vert_\Omega$ is locally constant. Then $F\vert_Z$ is locally constant. 
\end{theorem}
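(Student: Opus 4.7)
Fix $x_0\in Z$; the goal is to exhibit an open neighborhood $W$ of $x_0$ in $\BBV$ such that $F|_{W\cap Z}$ is a constant sheaf. The strategy is to apply Theorem~\ref{th:germofF} repeatedly along a line segment joining $x_0$ to a well-chosen base-point $\omega_0\in\Omega$, obtaining a uniform stalk value $L$ on $W\cap Z$, and then to use the $\R$-constructibility of $F$ to promote this stalk information to a sheaf isomorphism.

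First I would choose the base-point. Since $x_0\in Z\subset\Omega+\gamma$, write $x_0=\omega_0+g_0$ with $\omega_0\in\Omega$ and $g_0\in\gamma$. Because $\Omega$ is open and $\Int\gamma\neq\emptyset$, picking $v\in\Int\gamma$ and $\eps>0$ small the point $\omega_0-\eps v$ still lies in $\Omega$, and $g_0+\eps v\in\gamma+\Int\gamma\subset\Int\gamma$; hence, after replacement, we may assume $g_0\in\Int\gamma$, equivalently $\omega_0\in x_0+\Int(\gamma^a)$. The segment $y_t:=\omega_0+tg_0$ for $t\in[0,1]$ then lies in $\omega_0+\gamma$ and satisfies $y_s\in y_t+\gamma^a$ for $s\leq t$. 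Applying Theorem~\ref{th:germofF} at each $y_t$ yields an open neighborhood $U_t$ on which $F|_{(y_t+\gamma^a)\cap U_t}$ is constant; by compactness of $[0,1]$ a finite subcover $\{U_{t_i}\}$ suffices, and patching the successive constancy isomorphisms along the segment produces $F_{x_0}\simeq F_{\omega_0}=:L$, which is well-defined because $F|_\Omega$ is locally constant near $\omega_0$.

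Next, since $\Int\gamma$ is open and contains $g_0$, choose an open neighborhood $W$ of $x_0$ with $y-\omega_0\in\Int\gamma$ for every $y\in W$, and shrunk so that the cover $\{U_{t_i}\}$ of $[\omega_0,x_0]$ also covers a tubular neighborhood of that segment; then for every $y\in W$ the segment $[\omega_0,y]$ is still covered, and the same patching argument gives $F_y\simeq L$ for every $y\in W\cap Z$. To promote this pointwise identification to a sheaf-level isomorphism $F|_{W\cap Z}\simeq L_{W\cap Z}$, I would invoke the $\R$-constructibility of $F$: there is a $\mu$-stratification $W\cap Z=\bigsqcup_\alpha S_\alpha$ on which $F$ is locally constant, each stratum $S_\alpha$ is a simply connected cell of a polyhedral complex, hence $F|_{S_\alpha}$ is canonically constant with fibre $L$; to glue across adjacent strata it suffices to check that the specialization maps are the identity of $L$, which is exactly what Theorem~\ref{th:germofF} provides at each $y\in W\cap Z$ via the constant sheaf structure on the sector $(y+\gamma^a)\cap V_y$ that straddles the strata meeting $y$.

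The main obstacle is the last step. The microsupport condition $\musupp F\subset\BBV\times\gamma^{\circ a}$ gives only \emph{one-sided} propagation along any line in a direction of $\Int\gamma$, so a direct homotopy/microsupport argument on $W\times[0,1]$ produces only a one-sided microsupport bound $\{s'\geq 0\}$ on $\opb{H}F$ (with $H(x,t)=(1-t)x+t\omega_0$), which is insufficient to conclude $t$-invariance of $\opb{H}F$. The argument therefore must proceed stalk-by-stalk via Theorem~\ref{th:germofF} and then glue, and the delicate point is coherence of the local constant-sheaf isomorphisms on overlapping $\gamma^a$-sectors; this coherence is forced by the fact that every such sector intersects $\Omega$ (since $\omega_0\in y+\Int(\gamma^a)$ for $y\in W$), where $F|_\Omega\simeq L_\Omega$ carries an \emph{a priori} constant-sheaf structure against which all local sector-identifications are matched.
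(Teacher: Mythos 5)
Your overall strategy — apply Theorem~\ref{th:germofF} to obtain constancy of $F$ on $\gamma^a$-sectors emanating from points of $Z$, observe that these sectors reach into $\Omega$, and then glue — is a plausible line of attack and correctly identifies why a naive homotopy/microsupport argument fails (the bound is one-sided). However, two of the steps are genuinely incomplete, not merely terse.

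First, the patching argument for $F_y\simeq L$ at nearby $y\in W\cap Z$ has a real gap. You fix a finite cover $\{U_{t_i}\}$ of $[\omega_0,x_0]$ and say that ``the same patching argument'' applies to $[\omega_0,y]$. But Theorem~\ref{th:germofF} gives constancy only on $(y_{t_i}+\gamma^a)\cap U_{t_i}$, not on all of $U_{t_i}$, and a point $z=\omega_0+t(y-\omega_0)$ of the new segment need not satisfy $z\in y_{t_i}+\gamma^a$: one computes $y_{t_i}-z=(t_i-t)g_0-t(y-x_0)$, and for $t$ close to $t_i$ the second term dominates, so membership in $\gamma$ fails no matter how small $\|y-x_0\|$ is. Covering the tubular neighborhood by the $U_{t_i}$ does not help; one has to re-run germofF along each segment or organize the argument by strata of a fixed $\mu$-stratification.

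Second, and more seriously, the last step — upgrading the pointwise identifications $F_y\simeq L$ to a sheaf isomorphism $F|_{W\cap Z}\simeq L_{W\cap Z}$ — is exactly what needs to be proved and is left as an assertion (``this coherence is forced''). What actually makes the sector identifications coherent is the geometric fact that for $y\in Z$ and $V_y$ a sufficiently small neighborhood as in Theorem~\ref{th:germofF}, one has $(y+\Int\gamma^a)\cap V_y\subset\Omega$. This follows from $\gamma$-flatness: $(y+\Int\gamma^a)\subset\Omega+\gamma^a$ because $y\in\ol{\Omega+\gamma^a}$ and $\Omega+\gamma^a$ is $\gamma^a$-invariant and open, while $V_y\subset\Omega+\gamma$ since $\Omega+\gamma$ is open and contains $y$; hence the intersection lands in $(\Omega+\gamma)\cap(\Omega+\gamma^a)=\Omega$. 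You only invoke the weaker fact that $\omega_0\in y+\Int\gamma^a$, which is insufficient because $\omega_0$ need not lie in the small ball $V_y$. With the inclusion $(y+\Int\gamma^a)\cap V_y\subset\Omega$ in hand, each sector $(y+\gamma^a)\cap V_y$ is a convex set meeting $\Omega$ on which $F$ is constant, any two overlapping sectors also meet $\Omega$ on their (convex, hence connected) overlap, and one can then glue the constant structures against the reference trivialization of $F|_\Omega$. As written, your proof states the plan but does not carry out this essential verification, so it does not establish the theorem.
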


\begin{theorem}[\cite{KS18}*{Lem.~4.16, Th.~4.17}]\label{th:linearcase}
Assume~\eqref{hyp1} and let $F\in\Derb_{\rclg}(\cor_\BBV)$. Then 
there exists a disjoint $\gamma$-barcode $(A,Z)$ with $\supp(A,Z)=\supp(F)$ and 
such that $F\vert_{Z_a}$ is constant for each $a\in A$. Moreover, $F_x\simeq 0$ for $x\notin\bigsqcup_{a\in A}Z_a$. 
\end{theorem}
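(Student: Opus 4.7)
The plan is to combine the \PL stratification machinery of Proposition~\ref{pro:mustrat} with the two $\gamma$-sheaf propagation tools already established: the germ theorem (Theorem~\ref{th:germofF}) and the extension theorem for $\gamma$-flat open sets (Theorem~\ref{th:locctonflat}).

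First, since $F$ is \PL, one produces a \PL stratification $\BBV=\bigsqcup_\alpha S_\alpha$ on which $F$ is constant on each $S_\alpha$, using Proposition~\ref{pro:mustrat} applied to the covering furnished by Definition~\ref{def:Flinear}. Crucially, I would further refine the defining hyperplane arrangement so as to include all hyperplanes parallel to the facets of $\gamma$ through each vertex of the initial stratification. The polyhedrality assumption~\eqref{hyp1} ensures that only finitely many hyperplane directions are needed, so the refinement remains finite. In such a $\gamma$-compatible refinement, each top-dimensional open stratum $\Omega$ is a convex polyhedron whose facets are parallel to facets of $\gamma$, which forces $\Omega$ to be $\gamma$-flat in the sense of Definition~\ref{def:loclo}(c) (a direct computation of $(\Omega+\gamma)\cap(\Omega+\gamma^a)$ in the basis adapted to $\gamma$).

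Second, for each top-dimensional stratum $\Omega_i$ on which $F$ does not vanish, set $Z_i\seteq(\Omega_i+\gamma)\cap\ol{\;\Omega_i+\gamma^a\;}$. The proposition recorded just after Definition~\ref{def:loclo} identifies $Z_i$ as the $\gamma$-locally closed subset associated with the $\gamma$-flat open set $\Omega_i$, and one checks directly that it is a convex polyhedron. By Theorem~\ref{th:locctonflat} the restriction $F|_{Z_i}$ is locally constant, and since $Z_i$ is connected it is in fact constant. To verify that $\{Z_i\}$ forms a disjoint $\gamma$-barcode with $\bigsqcup_i Z_i=\{x:F_x\neq 0\}$, disjointness comes from the $\gamma$-compatibility of the refined stratification, because two distinct top-dimensional strata are separated by hyperplanes parallel to facets of $\gamma$, which also separate the corresponding $Z_i$ and $Z_j$; for exhaustion, if $F_x\neq 0$, Theorem~\ref{th:germofF} yields a neighborhood $U$ on which $F$ is constant along $(x+\gamma^a)\cap U$, forcing $x$ to lie in the $\gamma$-closure of some top-dimensional stratum $\Omega_i$ with $F|_{\Omega_i}\neq 0$, hence in $Z_i$. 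Finally, $\supp(F)=\ol{\{F_x\neq 0\}}=\bigcup_i\ol{Z_i}$ is immediate.

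The main obstacle is the first step — producing a refined \PL stratification in which every open stratum is automatically $\gamma$-flat. This is where the polyhedrality of $\gamma$ in~\eqref{hyp1} is essential: without it, no finite hyperplane refinement could make the strata $\gamma$-flat, and the whole argument collapses. Once the right stratification is in place, Theorem~\ref{th:locctonflat} provides the propagation of the constant restriction to the full $Z_i$, Theorem~\ref{th:germofF} ensures exhaustion, and the remaining verifications are a matter of combinatorial bookkeeping on the hyperplane arrangement.
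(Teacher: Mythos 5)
Your strategy correctly identifies the relevant tools --- Theorem~\ref{th:germofF}, Theorem~\ref{th:locctonflat}, and a compatible \PL stratification --- but the first step, producing a stratification whose top-dimensional open strata are $\gamma$-flat, does not go through as written, and that is exactly where the $\gamma$-sheaf hypothesis has to be used in a way your argument does not use it. An initial \PL stratification on which $F$ is constant (the one coming from Definition~\ref{def:Flinear}) may involve hyperplanes whose normals lie outside $\gamma^{\circ}\cup\gamma^{\circ a}$. Your refinement only \emph{adds} hyperplanes parallel to the facets of $\gamma$; it does not remove the bad ones. The refined top-dimensional strata are intersections of the old strata with a $\gamma$-parallel grid and still carry facets on the original bad hyperplanes; such a polyhedron is not $\gamma$-flat, so Theorem~\ref{th:locctonflat} cannot be invoked on it. (For example, in $\R^2$ with $\gamma=\{x\le0,\ y\le0\}$, any open convex polyhedron with a facet on the line $\{y=x\}$ satisfies $(\Omega+\gamma)\cap(\Omega+\gamma^a)\supsetneq\Omega$, and adding axis-parallel lines to the arrangement only produces smaller polyhedra still having a facet on $\{y=x\}$.)

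The missing idea is that the microsupport condition $\musupp(F)\subset\BBV\times\gamma^{\circ a}$ forces any hyperplane across which $F$ actually jumps to have conormal direction in $\gamma^{\circ a}$: over the smooth part of a codimension-one stratum the microsupport, if nonempty, contributes a nonzero conormal to that stratum. Consequently $F$ is automatically locally constant across every hyperplane whose normal is \emph{not} in $\gamma^{\circ a}\cup\gamma^{\circ}$, and one may therefore \emph{discard} such hyperplanes and work with a stratification by hyperplanes $\{\langle x,\xi_k\rangle=c_j\}$ with all $\xi_k\in\gamma^{\circ a}$. This is precisely the (nontrivial) opening assertion in the proof of Theorem~\ref{th:genergammatri}, and it is what your proposal needs to justify; adding $\gamma$-parallel hyperplanes is not a substitute for it. Once the stratification has this restricted form, its open strata are $\gamma$-flat, Theorem~\ref{th:locctonflat} applies, and your exhaustion and disjointness arguments via Theorem~\ref{th:germofF} can be pushed through, taking care in the exhaustion step to obtain $x\in\overline{\Omega_i}$ (hence $x\in\overline{\Omega_i+\gamma^a}$) in addition to $x\in\Omega_i+\gamma$, since both are needed for $x\in Z_i$.
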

(In fact, Theorem~\ref{th:linearcase} was proved for \LPL sheaves but the proof can easily be adapted to \PL sheaves.)

\subsection{Generators for \PL $\gamma$-sheaves}

In~\cite{KS18} we have constructed a category $\Barc$ whose objects are the $\gamma$-barcodes and a  fully faithful  functor 
\eq\label{eq:fctpsi}
&&\Psi\cl \Barc\to\mdrclg[\cor_\BBV],\quad
Z=\{Z_a\}_{a\in A}\mapsto \soplus_{a\in A}\cor_{Z_a}.
\eneq
However, as shown in~\cite{KS18}*{Ex.~2.14, 2.15}, the functor $\Psi$ is not essentially surjective as soon as $\dim\BBV>1$. 

\begin{definition}\label{def:barcodesheaf}
An object of $\mdrclg[\cor_\BBV]$ is a barcode $\gamma$-sheaf if it is in the essential image of $\Psi$.
\end{definition}

In~\cite{KS18} we made the following conjecture.
\begin{conjecture}
Let $F\in\Derb_{\rclg}(\cor_\BBV)$ and assume that $F$ has compact support. Then there exists a bounded complex 
$F^\scbul\in \RC^\rb(\mdrclg[\cor_\Vg])$ whose image in  $\Derb_{\rclg}(\cor_\BBV)$  is isomorphic to $F$ and such that each component $F^j$ 
of $F^\scbul$ is a barcode $\gamma$-sheaf with compact support. 
\end{conjecture}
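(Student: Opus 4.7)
My strategy is to reduce to an iterated mapping-cone construction by induction on the number of strata in the disjoint $\gamma$-barcode associated to $F$. Applying Theorem~\ref{th:linearcase} produces a finite disjoint $\gamma$-barcode $(A,Z)$ with $\supp(A,Z)=\supp(F)$, constancy of $F\vert_{Z_a}$ for each $a\in A$, and the vanishing $F_x\simeq 0$ for $x\notin\bigsqcup_a Z_a$. Since $F$ has compact support, $A$ is finite and each $\ol{Z_a}$ is compact; convexity (hence contractibility) of $Z_a$ upgrades local constancy to an explicit isomorphism $F\vert_{Z_a}\simeq V_a^\scbul\tens\cor_{Z_a}$ for some bounded complex $V_a^\scbul$ of finite-dimensional $\cor$-vector spaces.

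I would then carry out the induction on $|A|$. Using Theorem~\ref{th:eqvderbg} to work in the derived category of $\BBV_\gamma$, I pick an index $a_0\in A$ for which $Z_{a_0}$ is closed in the induced topology on $\bigsqcup_a Z_a$; such a minimal element exists by finiteness of $A$. This produces a distinguished triangle
\[ F'\to F\to V_{a_0}^\scbul\tens\cor_{Z_{a_0}}\to[+1] \]
in $\Derb_{\rclg}(\cor_\BBV)$, in which $F'$ is characterized by $F'_x\simeq 0$ on $Z_{a_0}$ and $F'_x\simeq F_x$ elsewhere, so that the strictly smaller disjoint $\gamma$-barcode $(A\setminus\{a_0\},Z)$ is adapted to $F'$. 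The induction hypothesis then furnishes a bounded complex $F'^\scbul\in\RC^\rb(\mdrclg[\cor_\BBV])$ representing $F'$ with compactly-supported barcode $\gamma$-sheaf components. Taking the mapping cone of a chain-level lift of the connecting morphism $V_{a_0}^\scbul\tens\cor_{Z_{a_0}}\to F'^\scbul[1]$ produces the desired bounded complex representing $F$: its components are finite direct sums of suitably shifted $V_{a_0}^j\tens\cor_{Z_{a_0}}$ and components of $F'^\scbul$, each still a compactly supported barcode $\gamma$-sheaf.

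The main obstacle is precisely this chain-level lift of the connecting morphism. A morphism between bounded complexes of barcode $\gamma$-sheaves in $\Derb_{\rclg}(\cor_\BBV)$ is a priori realized only via a zig-zag through injective resolutions, and nothing formally guarantees an honest chain representation in $\RC^\rb(\mdrclg[\cor_\BBV])$ without leaving the barcode subcategory during a $K$-injective replacement. Resolving this would seem to require proving that the natural functor from bounded complexes of compactly-supported barcode $\gamma$-sheaves to the compactly-supported part of $\Derb_{\rclg}(\cor_\BBV)$ is fully faithful; equivalently, one needs an $\Ext$-vanishing (or projectivity) theory for the constant sheaves $\cor_Z$ attached to $\gamma$-locally closed convex polyhedra. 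An alternative, more geometric route would be to build the complex directly as a cellular resolution adapted to the arrangement of affine half-spaces defining the $Z_a$'s, in the spirit of the proof of Theorem~\ref{th:genertri}, with the added difficulty of keeping every intermediate piece $\gamma$-locally closed and compactly supported. In either formulation, the substantive content of the conjecture is this $\Ext$ or cellular-resolution theory for barcode $\gamma$-sheaves.
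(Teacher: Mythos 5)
The statement you are attempting to prove is labeled a \emph{conjecture} in the paper and remains open there: the authors explicitly say they only prove a weaker form, namely Theorem~\ref{th:genergammatri}, which asserts that $\Derb_{\rclg}(\cor_{\BBV})$ is \emph{generated} as a triangulated category by the sheaves $\cor_P$, $P$ a $\gamma$-locally closed convex polyhedron. Generation via a chain of distinguished triangles is strictly weaker than the conjecture's demand for a bounded complex in $\RC^\rb(\mdrclg[\cor_\BBV])$ whose terms are barcode $\gamma$-sheaves and whose total object is isomorphic to $F$.

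Your inductive strategy of peeling off one stratum of a disjoint $\gamma$-barcode at a time is in fact very close to the argument the paper uses for the weaker theorem. There the authors pick a minimal $n$ so that $Z_n$ is \emph{open} in $\supp(F)$ and use the short exact sequence $0\to\cor_{Z_n}\tens F(Z_n)\to F\to F''\to 0$; you instead peel off a stratum that is closed in $\bigsqcup_a Z_a$, which is the dual variant and equally legitimate for producing distinguished triangles. Where your proposal cannot close is precisely the step you flag: assembling the chain of triangles into a genuine bounded complex of barcode $\gamma$-sheaves requires lifting the connecting morphism from $\Derb_{\rclg}(\cor_\BBV)$ to a chain map in $\RC^\rb(\mdrclg[\cor_\BBV])$, and no projectivity, injectivity, or $\Ext$-vanishing result for the objects $\cor_Z$ with $Z$ a $\gamma$-locally closed convex polyhedron is available to guarantee such a lift. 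You have diagnosed the missing ingredient correctly; it is exactly why the statement is recorded as a conjecture rather than proved, and your proposal should therefore be read as an accurate reduction of the problem, not as a proof.
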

As usual, for an additive category $\shc$,   $\RC^\rb(\shc)$ denotes the category of bounded complexes of objects of $\shc$.

In this subsection, we shall prove a weaker form of this conjecture, namely:

\begin{theorem}\label{th:genergammatri}
The triangulated category  $\Derb_{\rclg}(\cor_{\BBV})$ is generated by the family $\{\cor_P\}_P$ where $P$ ranges over the family of $\gamma$-locally closed convex polyhedra.
\end{theorem}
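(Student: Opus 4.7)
The plan is to combine Theorem~\ref{th:linearcase} with an induction on the size of a disjoint $\gamma$-barcode. Let $\sht$ denote the triangulated subcategory of $\Derb_{\rclg}(\cor_{\BBV})$ generated by the $\cor_P$ with $P$ a $\gamma$-locally closed convex polyhedron; the aim is to show $\sht=\Derb_{\rclg}(\cor_{\BBV})$. Given $F\in\Derb_{\rclg}(\cor_{\BBV})$, the truncation triangles $\tau_{\leq n}F\to F\to\tau_{>n}F\to[+1]$ together with the fact that $\sht$ is closed under shifts reduce us to the case $F\in\mdrclg[\cor_{\BBV}]$. Theorem~\ref{th:linearcase} then supplies a disjoint $\gamma$-barcode $(A,Z)$ with $F|_{Z_a}$ constant of some rank $n_a$ and $F_x\simeq 0$ for $x\notin\bigsqcup_a Z_a$, and I would induct on $|A|$.

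For the inductive step (with $|A|\geq 1$), I pick any $a_0\in A$ and let $k\cl Z_{a_0}\hookrightarrow\BBV$ be the locally closed embedding. Since $F|_{Z_{a_0}}\simeq\cor_{Z_{a_0}}^{n_{a_0}}$, the adjunction counit produces a morphism $\cor_{Z_{a_0}}^{n_{a_0}}\simeq k_!k^*F\to F$ which is an injection of sheaves (at stalks it is the identity on $Z_{a_0}$ and zero elsewhere). Set $F'\eqdot F/\cor_{Z_{a_0}}^{n_{a_0}}$; since $\mdrclg[\cor_{\BBV}]$ is a thick abelian subcategory of $\md[\cor_{\BBV}]$ (both the \PL condition and the $\gamma$-condition on the microsupport being stable by subquotients), $F'\in\mdrclg[\cor_{\BBV}]$. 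A stalk inspection shows that $F'$ vanishes on $Z_{a_0}$ and agrees with $F$ on each $Z_b$ for $b\neq a_0$, so $(A\setminus\{a_0\},(Z_b)_{b\neq a_0})$ is a disjoint $\gamma$-barcode witnessing the inductive hypothesis for $F'$. By induction $F'\in\sht$, and combining this with $\cor_{Z_{a_0}}^{n_{a_0}}\in\sht$ and the distinguished triangle attached to $0\to\cor_{Z_{a_0}}^{n_{a_0}}\to F\to F'\to 0$ gives $F\in\sht$.

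The subtle conceptual point, and what I expect to be the main obstacle to formulate cleanly, is that one should take \emph{any} choice of $a_0\in A$: no ``maximal'' or open-in-$\bigsqcup_a Z_a$ stratum is required. Indeed, the natural incidence relation $a\prec b\Longleftrightarrow Z_a\cap\ol{Z_b}\neq\varnothing$ on disjoint convex sets need not be acyclic (one can build three half-closed segments forming a triangle along which $\prec$ cycles), so trying to peel off a topologically-maximal stratum in analogy with the proof of Theorem~\ref{th:genertri} would be delicate. Here the $\gamma$-local-closedness of $Z_{a_0}$ together with the constancy of $F|_{Z_{a_0}}$ suffice to construct the injection $\cor_{Z_{a_0}}^{n_{a_0}}\hookrightarrow F$ directly from the adjunction, regardless of how $Z_{a_0}$ sits with respect to the other strata.
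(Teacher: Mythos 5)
The proof has a genuine gap at the crucial step. The claimed ``adjunction counit'' $k_!k^*F\to F$ does not exist for a general locally closed embedding $k$. For an \emph{open} embedding there is an adjunction $(k_!,k^*)$ whose counit $k_!k^*F\to F$ is a monomorphism; for a \emph{closed} embedding the adjunction is $(k^*,k_*)$ and the natural map goes the other way, $F\to k_*k^*F$; and for a merely locally closed embedding there is in general no natural morphism $F_{Z}\to F$ at all (the counit of $(k_!,k^!)$ does exist, but $k^!F\ne k^*F$). Concretely, take $\BBV=\R$, $\gamma=\R_{\leq 0}$, $F=\cor_{[0,2)}$ with the disjoint $\gamma$-barcode $Z_1=[0,1)$, $Z_2=[1,2)$, and choose $a_0=2$. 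Then $\Hom(\cor_{[1,2)},\cor_{[0,2)})=0$: such a morphism is a section of $\cor_{[0,2)}$ over a neighborhood of $[1,2)$ supported in $[1,2)$, i.e.\ a constant section of $\cor_{(0,2)}$ supported in $[1,2)$, which must vanish. So the proposed map not only fails to be injective, it does not exist.

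The ``subtle conceptual point'' you dismiss is in fact the crux. A monomorphism $F_{Z_{a_0}}\hookrightarrow F$ is available precisely when $Z_{a_0}$ is open in $\supp(F)$: one uses $F\simeq F_{\supp F}$ and the open inclusion $Z_{a_0}\hookrightarrow\supp F$. The paper does not take the disjoint $\gamma$-barcode from Theorem~\ref{th:linearcase}; it builds a specific one, the half-open boxes $Z_n$ cut out by finitely many $\gamma^{\circ a}$-linear forms and indexed by $n\in[0,N-1]^l$ with the product order, and proves in the embedded Lemma that any $n$ minimal among those with $F|_{Z_n}\not\simeq 0$ has $Z_n$ open in $\supp(F)$. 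That minimality argument uses the $\gamma$-structure of the $Z_n$ and is exactly what replaces the failing ``peel off any stratum'' step. Your worry about cyclic closure incidence is legitimate for an arbitrary disjoint $\gamma$-barcode, which is precisely why the paper constructs a special one rather than invoking Theorem~\ref{th:linearcase}; it is not a reason to drop the openness requirement.
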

In particular, the category $\Derb_{\rclg}(\cor_{\BBV})$ is generated by the barcodes  $\gamma$-sheaves. 

\begin{proof}
Let $F\in\mdrclg[\cor_\BBV]$. There exists $\{\xi_k\}_{1\leq k\leq l}$ in $\gamma^{0a}$  and 
$\{c_j\}_{0\leq j\leq N}$ in $\R$ with  $-\infty=c_0<c_1<\dots<c_{N-1}<c_N=+\infty$ such that, setting
\eqn
&&H_{k,j}=\{x\in\BBV;\langle x,\xi_k\rangle=c_j\}, \quad U\eqdot \BBV\setminus\bigsqcup_{k,j}H_{k,j},
\eneqn
the sheaf $F\vert_U$ is locally constant. 

For $n=(n_1,\dots,n_l)$ with $0\leq n_k<N$, define
\eqn
&&Z_n=\bigcap_{k=1}^l\{x;c_{n_k}\leq \langle x,\xi_k\rangle<c_{1+n_k}\},\quad \Omega_n=\bigcap_{k=1}^l\{x;c_{n_k}< \langle x,\xi_k\rangle
<c_{1+n_k}\}.
\eneqn
Then  $Z_n=\ol{\Omega_n+\gamma^{a}}\cap(\Omega_n+\gamma)$ and $Z_n$ is $\gamma$-locally closed. 

Since $F_{\Omega_n}$ is constant, $F_{Z_n}$ is constant by Theorem~\ref{th:locctonflat}. 

Now we have $\BBV=\bigsqcup_{n\in [0,N-1]^l}Z_n$. 
Set $A=\set{n\in [0,N-1]^l}{F\vert_{Z_n}\not\simeq0}$.
Then $\supp(F)=\bigcup_{n\in A}\ol{Z_n}$.
\begin{lemma}
There exists $n\in A$ such that $Z_n$ is open in $\supp(F)$. 
\end{lemma}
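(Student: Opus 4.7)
The plan is to choose $n\in A$ minimal with respect to coordinate sum --- specifically, pick $n\in A$ minimizing $\sum_{k=1}^{l}n_k$; such $n$ exists because $A$ is a finite subset of $[0,N-1]^l$, and we may assume $A\ne\varnothing$ as otherwise $F=0$ and there is nothing to prove --- and show that this $Z_n$ is open in $\supp(F)$. The argument will be purely combinatorial: it uses only the half-open definition of the cells $Z_n$ and in no way invokes the $\gamma$-sheaf structure of $F$ or the $\gamma^a$-invariance of $\supp(F)$.

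Fix an arbitrary $x\in Z_n$ and set $K\seteq\{k:\langle x,\xi_k\rangle=c_{n_k}\}$. The main (combinatorial) step is to list all $p\in[0,N-1]^l$ such that $x\in\ol{Z_p}$. Since $\ol{Z_p}=\{y:c_{p_k}\le\langle y,\xi_k\rangle\le c_{p_k+1}\}$ and $\langle x,\xi_k\rangle\in[c_{n_k},c_{n_k+1})$, the condition $x\in\ol{Z_p}$ says that for each $k$ the closed interval $[c_{p_k},c_{p_k+1}]$ must contain $\langle x,\xi_k\rangle$. Using the fact that the $c_j$ are strictly increasing, this forces $p_k=n_k$ for $k\notin K$ and $p_k\in\{n_k-1,n_k\}$ for $k\in K$. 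In particular $p\le n$ componentwise, with equality if and only if $p=n$.

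Consequently, any $p\in A$ with $x\in\ol{Z_p}$ and $p\ne n$ would satisfy $p<n$ componentwise, giving $\sum_k p_k<\sum_k n_k$ and contradicting the minimality of $n$. Hence $x$ lies outside the closed set $\bigcup_{p\in A\setminus\{n\}}\ol{Z_p}$ and admits an open neighborhood $U$ disjoint from it. Shrinking $U$ further so that every $y\in U$ satisfies the open condition $\langle y,\xi_k\rangle<c_{n_k+1}$ for all $k$, one has $U\cap\ol{Z_n}=U\cap Z_n$, whence
\[
U\cap\supp(F)\;=\;U\cap\bigcup_{p\in A}\ol{Z_p}\;=\;U\cap\ol{Z_n}\;=\;U\cap Z_n\;\subset\;Z_n.
\]
Since $x\in Z_n$ was arbitrary, $Z_n$ is open in $\supp(F)$. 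The only subtle point in the whole argument is the half-open case analysis that identifies which $\ol{Z_p}$ contain $x$; once this is in place the sum-minimality of $n$ immediately rules out every other $p\in A$.
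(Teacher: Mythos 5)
Your proof is correct and takes essentially the same approach as the paper: pick $n\in A$ minimal (you minimize $\sum_k n_k$, which in particular yields a minimal element for the componentwise order the paper uses), observe via the half-open definition of the cells that any $p\in A$ with $\ol{Z_p}\ni x\in Z_n$ satisfies $p\le n$, and conclude by minimality that $Z_n$ is open in $\supp(F)$. The paper phrases this as a single set equality, whereas you argue pointwise at each $x\in Z_n$, but the key idea is the same.
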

\begin{proof}[Proof of the lemma]
We order the set of $n$'s by $n\leq n'$ if $n_j\leq n'_j$ for all $j\in[1,\dots,l]$. 
Let $n$ be a minimal element of $A$. Then $Z_n$ is open in $\supp(F)$. Indeed, 
\eqn
Z_n=\supp(F)\cap Z_n&= &\supp(F)\cap\bigcap_k\{x;c_{n_k}\leq \langle x,\xi_k\rangle<c_{n_k+1}\}\\
&=&\supp(F)\cap\bigcap_k\{x; \langle x,\xi_k\rangle<c_{n_k+1}\}.
\eneqn
Note that the last equality is true since otherwise there exists 
$n'< n$ in $A$ such that
$\supp(F)\cap Z_{n'}\not=\emptyset$,
and $n$ would not be minimal. 
\end{proof}
Now we can complete the proof of Theorem~\ref{th:genergammatri}. 

Let us take $n\in A$ such that $Z_n\subset \supp(F)$ is open in $\supp(F)$.
Then we have an exact sequence
\eqn
&&0\to\cor_{Z_n}\tens F(Z_n)\to F\to F''\to0
\eneqn
and $\supp F''\subset \supp(F)\setminus Z_n$. 
Then the proof goes by induction on $\#A$. 
\end{proof}

\begin{bibdiv}
\begin{biblist}

\bib{BG18}{article}{
  author={Berkouk, Nicolas},
  author={Ginot, Gr{\'e}gory},
  title={A derived isometry theorem for  sheaves},
  eprint={arXiv:1805.09694},
year={2018}
}

\bib{CCGGO09}{article}{ 
author={Chazal, Fr{\'e}d{\'e}ric},
author={Cohen-Steiner, David},
author={Glisse, M},
author={Guibas, L. J.},
author={Oudot, Steve},
title={Proximity of Persistence Modules and their Diagrams},
 booktitle ={Proc. 25th, ACM Sympos. on Comput. Geom.},
pages={237--246},
year={2009}
}

\bib{CB14}{article}{
 author={Crawley-Boevey, William},
  title={Decomposition of pointwise finite-dimensional persistence modules},
 journal={J. Algebra Appl.},
  volume={14},
  year={2015},
  pages={1550066, 8 pp},
  eprint={arXiv:1210.0819},
  date={2014}
}

\bib{CEH07}{article}{  
  author={Cohen-Steiner, David},
  author={Edelsbrunner, Herbert},
  author={Harer, John}, 
  title={Stability of persistence diagrams},
  journal={Discrete Comput. Geom.},
  volume={37},
  year={2007},
  pages={103--120},
}

\bib{CSGO16}{book}{ 
title={The Structure and Stability of Persistence Modules},
author={Chazal, Fr{\'e}d{\'e}ric},
author={de Silva, Vin},
author={Glisse, Marc},
author={Oudot, Steve},
series={ Springer Briefs in Mathematics},
editor={Springer},
year={2016}
}

\bib{Cu13}{article}{ 
author={Curry, Justin M.},
title={Sheaves, cosheaves and applications},
 eprint={arXiv:1303.3255v2},
year={2013}
 }

\bib{EH08}{article}{ 
  author={Edelsbrunner, Herbert},
  author={Harer, John},
  title={Persistent homology --- a survey},
  journal={Contemporary Mathematics},
  volume={453},
  year={2008},
  pages={257--282},
  editor={J. E. Goodman, J. Pach and R. Pollack},
 booktitle={Surveys on Discrete and Computational Geometry. Twenty Years Later},
}

\bib{Gri08}{article}{
  author={Ghrist, Robert},
  title={Barcodes: The persistent topology of data},
  journal={Bull. Amer. Math. Soc.},
  volume={45},
  year={2008},
  pages={61--75},
}
 
 \bib{Gu16}{article}{
  author={Guillermou, St\'ephane},
  title={The three cusps conjecture},
  eprint={arXiv:1603.07876},
  date={2016},
}
 
\bib{GS16}{article}{
    author = {Guillermou, St\'ephane},
    author={Schapira, Pierre},
   title= {Construction of sheaves on the subanalytic site},
  journal = {Ast\'erisque},
    volume = {383},
   year= {2016},
    pages = {1--60},
}
	
\bib{KS90}{book}{
  author={Kashiwara, Masaki},
  author={Schapira, Pierre},
  title={Sheaves on manifolds},
  series={Grundlehren der Mathematischen Wissenschaften},
  volume={292},
  publisher={Springer-Verlag, Berlin},
  date={1990},
  pages={x+512},
}

\bib{KS18}{article}{
  author={Kashiwara, Masaki},
  author={Schapira, Pierre},
  title={Persistent homology and microlocal sheaf theory},
  journal={Journal of Applied and Computational Topology},
  date={2018},
eprint={arXiv:1705.00955}
}

\bib{Le15}{article}{
 author = {Lesnick, Michael},
 title = {The Theory of the Interleaving Distance on Multidimensional Persistence Modules},
 journal = {Found. Comput. Math.},
 volume = {15},
 year = {2015},
 pages = {613--650},
 publisher = {Springer-Verlag New York, Inc.},
} 

\end{biblist}
\end{bibdiv}

\vspace*{1cm}
\noindent
\parbox[t]{21em}
{\scriptsize{
\noindent
Masaki Kashiwara\\
 Kyoto University \\
Research Institute for Mathematical Sciences\\
Kyoto, 606--8502, Japan\\
and\\
Department of Mathematical Sciences and School of Mathematics,\\
Korean Institute for Advanced Studies, \\
Seoul 130-722, Korea

\medskip\noindent
Pierre Schapira\\
Sorbonne Universit{\'e}\\
Institut de Math{\'e}matiques de Jussieu\\
e-mail: pierre.schapira@imj-prg.fr\\
http://webusers.imj-prg.fr/\textasciitilde pierre.schapira/
}}

\end{document}